\providecommand{\tabularnewline}{\\}
\numberwithin{table}{section}
\numberwithin{equation}{section}
\theoremstyle{plain}
\newtheorem{thm}{\protect\theoremname}[section]
\theoremstyle{definition}
\newtheorem{defn}[thm]{\protect\definitionname}
\theoremstyle{plain}
\newtheorem{prop}[thm]{\protect\propositionname}
\theoremstyle{plain}
\newtheorem{lem}[thm]{\protect\lemmaname}
\theoremstyle{remark}
\newtheorem{rem}[thm]{\protect\remarkname}
\theoremstyle{plain}
\newtheorem{cor}[thm]{\protect\corollaryname}
\theoremstyle{remark}
\newtheorem{claim}[thm]{\protect\claimname}
\providecommand{\claimname}{Claim}
\providecommand{\corollaryname}{Corollary}
\providecommand{\definitionname}{Definition}
\providecommand{\lemmaname}{Lemma}
\providecommand{\propositionname}{Proposition}
\providecommand{\remarkname}{Remark}
\providecommand{\theoremname}{Theorem}
\begin{document}
\title{\textbf{\Large{}Key Varieties for Prime $\mathbb{Q}$-Fano Threefolds
Related with $\mathbb{P}^{2}\times\mathbb{P}^{2}$-Fibrations. Part
II}}
\author{Hiromichi Takagi}
\maketitle
\begin{abstract}
We construct a $15$-dimensional affine variety $\Pi_{\mathbb{A}}^{15}$
with a ${\rm GL}_{2}$- and $(\mathbb{C}^{*})^{4}$-actions. We denote
by $\Pi_{\mathbb{A}}^{14}$ the affine variety obtained from $\Pi_{\mathbb{A}}^{15}$
by setting one specified variable to $1$ (we refer the precise definition
to Definition \ref{def:Pi} of the paper). Let $\Pi_{\mathbb{P}}^{13}$
be several weighted projectivizations of $\Pi_{\mathbb{A}}^{14}$,
and $\Pi_{\mathbb{P}}^{14}$ the weighted cone over $\Pi_{\mathbb{P}}^{13}$
with a weight one coordinate added. We show that $\Pi_{\mathbb{P}}^{13}$
or $\Pi_{\mathbb{P}}^{14}$ produce, as weighted complete intersections,
examples of prime $\mathbb{Q}$-Fano threefolds of codimension four
belonging to the eight classes No.308, 501, 512, 550, 577, 872, 878,
and 1766 of the graded ring database \cite{key-6}. The construction
of $\Pi_{\mathbb{A}}^{15}$ is based on a certain type of unprojection
and is inspired by R.Taylor's thesis \cite{key-3} submitted to University
of Warwick. We also show that a partial projectivization of $\Pi_{\mathbb{A}}^{15}$
has a $\mathbb{P}^{2}\times\mathbb{P}^{2}$-fibration over the affine
space $\mathbb{A}^{10}$. To show this, we introduce another $13$-dimensional
affine variety $H_{\mathbb{A}}^{13}$ whose product with an open subset
of $\mathbb{A}^{2}$ is isomorphic to a sextic cover of an open subset
of $\Pi_{\mathbb{A}}^{15}$.
\end{abstract}
\tableofcontents{}

\section{Introduction}

\subsection{Background}

In this paper, we work over $\mathbb{C}$, the complex number field. 

A complex projective variety is called a \textsl{$\mathbb{Q}$-Fano
variety} if it is a normal variety with only terminal singularities,
and its anti-canonical divisor is ample. A $\mathbb{Q}$-Fano variety
is called \textit{prime} if its anti-canonical divisor generates the
group of numerical equivalence classes of divisors. This paper is
the sequel to \cite{key-5}, where we construct affine varieties $\Sigma_{\mathbb{A}}^{14}$
and $\Sigma_{\mathbb{A}}^{13}$, and produce, as weighted complete
intersections of their weighted projectivizations, several examples
of prime $\mathbb{Q}$-Fano threefolds of anti-canonical codimension
four. Here, by the anti-canonical codimension of a $\mathbb{Q}$-Fano
threefold $X$, we mean the codimension of $X$ in the weighted projective
space of the minimal dimension determined by the anti-canonical graded
ring of $X$, and we denote it by ${\rm ac}_{X}$.

In this paper, we construct a new affine variety $\Pi_{\mathbb{A}}^{14}$
and produce, as weighted complete intersections of its weighted projectivizations
$\Pi_{\mathbb{P}}^{13}$ or the weighted cones over $\Pi_{\mathbb{P}}^{13}$,
several new examples of prime $\mathbb{Q}$-Fano threefolds of anti-canonical
codimension four.

\subsection{Affine key varieties $\Pi_{\mathbb{A}}^{15}$ and $\Pi_{\mathbb{A}}^{14}$\label{subsec:Affine-key-varieties}}

To construct the affine variety $\Pi_{\mathbb{A}}^{14}$, we introduce
an affine variety $\Pi_{\mathbb{A}}^{15}$ wihich will contain $\Pi_{\mathbb{A}}^{14}$
as a divisor. For this, we start from a non-normal affine variety
whose normalization is $\mathbb{A}^{6}$, whose construction is a
modification of those given in \cite[Ex.~5.2.2]{key-3}. Let $\mathbb{A}^{6}\rightarrow\mathbb{A}^{8}$
be the morphism defined by {\small{}$(p_{1},p_{2},p_{3},p_{4},t_{1},w)\mapsto(p_{1},p_{2},p_{3},p_{4},u,v,t_{1},t_{2})$}
with 
\begin{equation}
u=wp_{1}+w^{2}p_{2},v=wp_{3}+w^{2}p_{4},t_{2}=w^{3}+t_{1}w\label{eq:zb}
\end{equation}
and $p_{1},p_{2},p_{3},p_{4},t_{1}$ being unchanged. We can easily
check that this is finite and birational onto its image, hence the
image is non-normal. Moreover, we can show that the image is defined
by all the $3\times3$ minors of the matrix

{\small{}
\begin{equation}
\mathsf{M}=\left(\begin{array}{cccccc}
u & v & -t_{2}p_{2} & -t_{2}p_{4} & -t_{2}p_{1}+t_{1}u & -t_{2}p_{3}+t_{1}v\\
-p_{1} & -p_{3} & u+t_{1}p_{2} & v+t_{1}p_{4} & -t_{2}p_{2} & -t_{2}p_{4}\\
-p_{2} & -p_{4} & -p_{1} & -p_{3} & u & v
\end{array}\right)\label{eq:M}
\end{equation}
\noindent} (see Proposition \ref{prop:EqD}). For $i,j,k$ with $1\leq i<j<k\leq6,$
we denote by $\mathsf{M}_{ijk}$ the $3\times3$ matrix whose $1,2,3$
th columns coincide with $i,j,k$ th columns of $\mathsf{M}$, respectively
and set $D_{ijk}:=\det\mathsf{M}_{ijk}$. It is easy to see that the
ideal generated by all the $3\times3$ minors of $\mathsf{M}$ in
$\mathbb{C}[p_{1},p_{2},p_{3},p_{4},u,v,t_{1},t_{2}]$ are generated
by $D_{123},D_{124},D_{125},D_{126},D_{135},D_{136}':=D_{136}+2D_{145},D_{245}':=D_{245}+2D_{146},$
and $D_{246}$ (slightly strange choices of $D'_{136}$ and $D_{245}'$
are for the group action on the hypersurface $\{G=0\}$ defined just
below. See Proposition \ref{prop:GL}). Then we consider the hypersurface
in the affine space with coordinates $p_{1},p_{2},p_{3},p_{4}$,$u$,$v$,
$t_{1}$, $t_{2}$, and $L_{123},L_{124},L_{125},L_{126},L_{135},L_{136},L_{245},L_{246}$
defined by the following polynomial:{\small{}
\begin{align}
G & :=L_{123}D_{123}+L_{124}D_{124}+L_{125}D_{125}+L_{126}D_{126}\nonumber \\
 & +L_{135}D_{135}+3L_{136}D_{136}'+3L_{245}D_{245}'+L_{246}D_{246}.\label{eq:G}
\end{align}
}{\small\par}

Inspired by the theory of unprojection, we define the following affine
variety with nine equations.
\begin{defn}
\label{def:Pi}(1) We consider the affine space $\mathbb{A}^{19}$
with coordinates $p_{1},p_{2},p_{3},p_{4},$$u$,$v$,$t_{1}$,$t_{2}$,
$L_{123},L_{124},L_{125},L_{126},L_{135},L_{136},L_{245},L_{246}$
and $s_{1},s_{2},s_{3}$. In this affine space, we define an affine
scheme $\Pi_{\mathbb{A}}^{15}$ by the following nine polynomials
$F_{1},\dots,F_{9}$:\newpage{\small{}
\begin{align*}
F_{1} & =(us_{1}-p_{1}s_{2}-p_{2}s_{3})-L_{126}(-p_{2}p_{3}+p_{1}p_{4})-L_{136}(p_{1}^{2}+t_{1}p_{2}^{2}+p_{2}u)\\
 & -L_{245}(2p_{1}p_{3}+2t_{1}p_{2}p_{4}+p_{4}u+p_{2}v)-L_{246}(p_{3}^{2}+t_{1}p_{4}^{2}+p_{4}v),\\
\\
F_{2} & =(vs_{1}-p_{3}s_{2}-p_{4}s_{3})+L_{125}(-p_{2}p_{3}+p_{1}p_{4})+L_{135}(p_{1}^{2}+t_{1}p_{2}^{2}+p_{2}u)\\
 & +L_{136}(2p_{1}p_{3}+2t_{1}p_{2}p_{4}+p_{4}u+p_{2}v)+L{}_{245}(p_{3}^{2}+t_{1}p_{4}^{2}+p_{4}v),\\
\\
F_{3} & =\big(-t_{2}p_{2}s_{1}+(u+t_{1}p_{2})s_{2}-p_{1}s_{3}\big)-L_{124}(-p_{2}p_{3}+p_{1}p_{4})-L_{136}(t_{2}p_{2}^{2}+p_{1}u)\\
 & -L_{245}(2t_{2}p_{2}p_{4}+p_{3}u+p_{1}v)-L_{246}(t_{2}p_{4}^{2}+p_{3}v),\\
\\
F_{4} & =\big(-t_{2}p_{4}s_{1}+(v+t_{1}p_{4})s_{2}-p_{3}s_{3}\big)+L_{123}(-p_{2}p_{3}+p_{1}p_{4})+L_{135}(t_{2}p_{2}^{2}+p_{1}u)\\
 & +L_{136}(2t_{2}p_{2}p_{4}+p_{3}u+p_{1}v)+L_{245}(t_{2}p_{4}^{2}+p_{3}v),\\
\\
F_{5} & =\big(-t_{2}p_{1}s_{1}+(t_{1}p_{1}-t_{2}p_{2})s_{2}+(u+t_{1}p_{2})s_{3}\big)\\
 & +L_{123}(p_{1}^{2}+t_{1}p_{2}^{2}+p_{2}u)+L_{124}(p_{1}p_{3}+t_{1}p_{2}p_{4}+p_{2}v)\\
 & -L_{125}(t_{2}p_{2}^{2}+p_{1}u)-L_{126}(t_{2}p_{2}p_{4}+p_{1}v)\\
 & -L_{136}(t_{1}p_{2}u-t_{2}p_{1}p_{2}+u^{2})-L_{245}\big(t_{1}(p_{4}u+p_{2}v)-t_{2}(p_{2}p_{3}+p_{1}p_{4})+2uv\big)\\
 & -L_{246}(t_{1}p_{4}v-t_{2}p_{3}p_{4}+v^{2}),\\
\\
F_{6} & =(-t_{2}p_{3}s_{1}+(t_{1}p_{3}-t_{2}p_{4})s_{2}+(v+t_{1}p_{4})s_{3})\\
 & +L_{123}(t_{1}p_{2}p_{4}+p_{1}p_{3}+p_{4}u)+L_{124}(p_{3}^{2}+t_{1}p_{4}^{2}+p_{4}v)\\
 & -L_{125}(t_{2}p_{2}p_{4}+p_{3}u)-L_{126}(t_{2}p{}_{4}^{2}+p_{3}v)\\
 & +L_{135}(t_{1}p_{2}u-t_{2}p_{1}p_{2}+u^{2})+L_{136}\big(t_{1}(p_{4}u+p_{2}v)-t_{2}(p_{2}p_{3}+p_{1}p_{4})+2uv\big)\\
 & +L_{245}(t_{1}p_{4}v-t_{2}p_{3}p_{4}+v^{2}),\\
\end{align*}
\begin{align*}
 & F_{7}=(-s_{2}^{2}+s_{1}s_{3})+\big((L_{123}p_{2}+L_{124}p_{4})s_{1}-(L_{125}p_{2}+L_{126}p_{4})s_{2}\big)\\
 & +L_{123}(L_{136}p_{2}^{2}+2L_{245}p_{2}p_{4}+L_{246}p_{4}^{2})-L_{124}(L_{135}p_{2}^{2}+2L_{136}p_{2}p_{4}+L_{245}p_{4}^{2})\\
 & -L_{125}\big(L_{136}p_{1}p_{2}+L_{245}(p_{2}p_{3}+p_{1}p_{4})+L_{246}p_{3}p_{4}\big)+L_{126}\big(L_{135}p_{1}p_{2}+L_{136}(p_{2}p_{3}+p_{1}p_{4})+L_{245}p_{3}p_{4}\big)\\
 & +\big(L_{135}L{}_{245}-L_{136}^{2}\big)(-p_{1}^{2}+2up_{2}+t_{1}p_{2}^{2})+(L_{135}L_{246}-L{}_{136}L_{245})(-p_{1}p_{3}+up_{4}+vp_{2}+t_{1}p_{2}p_{4})\\
 & +\big(L{}_{136}L_{246}-L_{245}^{2}\big)(-p_{3}^{2}+2vp_{4}+t_{1}p_{4}^{2}),\\
\\
 & F_{8}=(s_{2}s_{3}+t_{1}s_{1}s_{2}-t_{2}s_{1}^{2})+s_{1}(L_{123}p_{1}+L_{124}p_{3})-s_{2}(L_{125}p_{1}+L_{126}p_{3})\\
 & +L_{123}\big(L{}_{136}p_{1}p_{2}+L{}_{245}(p_{1}p_{4}+p_{3}p_{2})+L_{246}p_{3}p_{4}\big)-L_{124}\big(L_{135}p_{1}p_{2}+L{}_{136}(p_{1}p_{4}+p_{3}p_{2})+L{}_{245}p_{3}p_{4}\big)\\
 & -L_{125}(L{}_{136}p_{1}^{2}+2L_{245}p_{1}p_{3}+L_{246}p_{3}^{2})+L_{126}(L_{135}p_{1}^{2}+2L{}_{136}p_{1}p_{3}+L{}_{245}p_{3}^{2})\\
 & +\big(-L_{136}^{2}+L_{135}L{}_{245}\big)(2up_{1}+2t_{1}p_{1}p_{2}-t_{2}p_{2}^{2})\\
 & +(L_{135}L_{246}-L{}_{136}L{}_{245})\big((vp_{1}+up_{3})+t_{1}(p_{1}p_{4}+p_{3}p_{2})-t_{2}p_{2}p_{4}\big)\\
 & +(L_{246}L{}_{136}-L_{245}^{2})(2vp_{3}+2t_{1}p_{3}p_{4}-t_{2}p_{4}^{2}),\\
\\
 & F_{9}=(t_{1}s_{2}^{2}+s_{3}^{2}-s_{1}s_{2}t_{2})+s_{1}(L_{123}u+L_{124}v)-s_{2}(L_{125}u+L_{126}v)\\
 & +(L_{124}L_{125}-L_{123}L_{126})(-p_{2}p_{3}+p_{1}p_{4})\\
 & -L_{123}\big(L_{246}(p_{3}^{2}+t_{1}p_{4}^{2})+L{}_{136}(p_{1}^{2}+t_{1}p_{2}^{2})+2L{}_{245}(p_{1}p_{3}+t_{1}p_{2}p_{4})\big)\\
 & +L_{124}\big(L_{135}(p_{1}^{2}+t_{1}p_{2}^{2})+2L{}_{136}(p_{1}p_{3}+t_{1}p_{2}p_{4})+L{}_{245}(p_{3}^{2}+t_{1}p_{4}^{2})\big)\\
 & +L_{125}t_{2}(L_{246}p_{4}^{2}+L{}_{136}p_{2}^{2}+2L{}_{245}p_{2}p_{4})-L_{126}t_{2}(L_{135}p_{2}^{2}+2L{}_{136}p_{2}p_{4}+L{}_{245}p_{4}^{2})\\
 & +\big(-L_{136}^{2}+L_{135}L{}_{245}\big)(u^{2}+2t_{2}p_{1}p_{2})+(L_{135}L_{246}-L{}_{136}L{}_{245})\big(uv+t_{2}(p_{2}p_{3}+p_{1}p_{4})\big)\\
 & +\big(L_{246}L{}_{136}-L_{245}^{2}\big)(v^{2}+2t_{2}p_{3}p_{4}).\\
\end{align*}
}{\small\par}

\vspace{3pt}

\noindent (2) We denote by $\Pi_{\mathbb{A}}^{14}$ the affine scheme
obtained from $\Pi_{\mathbb{A}}^{15}$ by setting $L_{246}=1$. 
\end{defn}

\vspace{3pt}

Following \cite{key-2} and \cite{key-3}, we show in the section
\ref{sec:Unprojection} that $\Pi_{\mathbb{A}}^{15}$ is a $15$-dimensional
normal Gorenstein varieties of codimension $4$ in $\mathbb{A}^{19}$.
Moreover, we show that $\Pi_{\mathbb{A}}^{15}$ has only terminal
singularities (Proposition \ref{prop:MoreSing}). 

\subsection{Main Theorem}

The main result of this paper is the following theorem, for which
we use for the classes of $\mathbb{Q}$-Fano threefolds the assigned
numbers in the database \cite{key-6}. We prepare one convention;
by a \textit{numerical data} for a prime $\mathbb{Q}$-Fano threefold
$X$ of No. $*$, we mean the quadruplet consisting of the Fano index
of $X$, the genus $g(X):=h^{0}(-K_{X})-2$, the basket of singularities
of $X$, and the Hilbert numerator of $X$ with respect to $-K_{X}$,
which are given in the database \cite{key-6}. 
\begin{thm}
\label{thm:main}The following assertions hold:\vspace{3pt}

\noindent$(1)\,(1\text{-}{\rm 1)}$ There exists a quasi-smooth prime
$\mathbb{Q}$-Fano threefold $X$ with ${\rm ac}_{X}=4$ for the eight
numerical data No.~$308$,~$501$,~$512$,~$550$,~$577$,~$872$,~$878$,
and $1766$ of \cite{key-6} such that it is obtained as a weighted
complete intersection from the weighted projectivization $\Pi_{\mathbb{P}}^{13}$
of $\Pi_{\mathbb{A}}^{14}$ whose weights of coordinates given below,
or the cone $\Pi_{\mathbb{P}}^{14}$over $\Pi_{\mathbb{P}}^{13}$
with a weight one coordinate added. 

\vspace{3pt}

We denote by $w(*)$ the weight of the coordinate $*$. 

\vspace{3pt}

\noindent\textbf{\textup{Weights for No.\,308,~501,~512,~550,~872:}}

{\small{}
\begin{align*}
 & \left(\begin{array}{cc}
w(u) & w(v)\\
w(p_{1}) & w(p_{3})\\
w(p_{2}) & w(p_{4})
\end{array}\right)=\left(\begin{array}{cc}
d-1 & d+1\\
d-2 & d\\
d-3 & d-1
\end{array}\right),\\
 & \left(\begin{array}{ccc}
w(s_{1}) & w(s_{2}) & w(s_{3})\end{array}\right)=\left(\begin{array}{ccc}
d+1 & d+2 & d+3\end{array}\right),\,w(t_{1})=2,\,w(t_{2})=3,\\
 & \left(\begin{array}{cc}
w(L_{123}) & w(L_{125})\\
w(L_{124}) & w(L_{126})
\end{array}\right)=\left(\begin{array}{cc}
6 & 5\\
4 & 3
\end{array}\right),\,w(L_{135})=6,\,w(L{}_{136})=4,\,w(L_{245})=2,\\
 & \Pi_{\mathbb{P}}^{13}\subset\mathbb{P}^{17}(2^{2},3^{2},4^{2},5,6^{2},d-3,d-2,d-1^{2},d,d+1^{2},d+2,d+3),
\end{align*}
}where $d=7,7,6,5,4$ for No.~$308$,~$501$,~$512$,~$550$,~$872$
respectively.

\vspace{3pt}

\noindent\textbf{\textup{Weights for No.\,577,~878,~1766:}}

{\small{}
\begin{align*}
 & \left(\begin{array}{cc}
w(u) & w(v)\\
w(p_{1}) & w(p_{3})\\
w(p_{2}) & w(p_{4})
\end{array}\right)=\left(\begin{array}{cc}
d+1 & d+2\\
d & d+1\\
d-1 & d
\end{array}\right),\\
 & \left(\begin{array}{ccc}
w(s_{1}) & w(s_{2}) & w(s_{3})\end{array}\right)=\left(\begin{array}{ccc}
d+1 & d+2 & d+3\end{array}\right),\,w(t_{1})=2,\,w(t_{2})=3,\\
 & \left(\begin{array}{cc}
w(L_{123}) & w(L_{125})\\
w(L_{124}) & w(L_{126})
\end{array}\right)=\left(\begin{array}{cc}
4 & 3\\
3 & 2
\end{array}\right),\,w(L_{135})=3,\,w(L{}_{136})=2,\,w(L_{245})=1,\\
 & \Pi_{\mathbb{P}}^{13}\subset\mathbb{P}^{17}(1,2^{3},3^{4},4,d-1,d^{2},d+1^{3},d+2^{2},d+3),
\end{align*}
}where $d=4,3,2$ for No.~$577$,~$878$,~$1766$ respectively.
\vspace{3pt}

Moreover, descriptions of $X$ in each classes are presented as in
Table \ref{Table1}.

\vspace{3pt}

\noindent$(1\text{-}2)$ For such an $X$ of No.~$501$, $512$,
the locus $\mathbb{P}(s_{1},s_{2},s_{3})\cap X$ set-theoretically
consists of the $s_{1}$-point, which is the unique highest index
cyclic quotient singularity of $X$. A projection of $X$ is induced
by the projection of $\Pi_{\mathbb{P}}^{14}$ from the intersection
with the locus $\mathbb{P}(s_{1},s_{2},s_{3})$ to the cone over the
hypersurface $\{G=0\}|_{L_{246}=1}$ with a weight one coordinate
added. Moreover this projection of $X$ is birational. 

\vspace{3pt}

\noindent$(1\text{-}3)$ For such an $X$ of No.~$550$,~$872$,~$577$,~$878$~or~$1766$,
there exists a projection of $X$ to a hypersurface of weight $18$,~$15$,~$18$,~$15$
or $12$ respectively with the center being any one of the highest
index cyclic quotient singularities of $X$. Moreover this projection
is birational (for more detailed descriptions of the projection as
in $(1\text{{-}}2)$, see the subsections $\ref{subsec:No.-550,-872}$
and $\ref{subsec:No.-577,-878,}$). 

\vspace{3pt}

\noindent $(2)$ A general member $T$ of $|-K_{X}|$ for a general
$X$ as in $(1)$ is a quasi-smooth $K3$ surface with only Du Val
singularities of type $A$. Moreover, $T$ has only singularities
at points where $X$ is singular and if $X$ has a $1/\alpha\,(\beta,-\beta$,1)-singularity
at a point for some $\alpha,\,\beta\in\mathbb{N}$ with $(\alpha,\beta)=1$,
then has a $1/\alpha\,(\beta,-\beta)$-singularity there.
\end{thm}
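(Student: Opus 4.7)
The plan is to treat the eight cases uniformly: once weighted homogeneity of the nine defining polynomials $F_1,\dots,F_9$ is checked under the tabulated weights (a mechanical degree count, monomial by monomial), the weighted projectivization $\Pi_{\mathbb{P}}^{13}$ is the corresponding weighted-projective subscheme of $\mathbb{P}^{17}$, of codimension four, and, because $\Pi_{\mathbb{A}}^{14}$ is a normal Gorenstein terminal variety of codimension four in $\mathbb{A}^{18}$ (inherited from $\Pi_{\mathbb{A}}^{15}$ via the divisor $L_{246}=1$ and Proposition \ref{prop:MoreSing}), the same properties pass to $\Pi_{\mathbb{P}}^{13}$ away from the vertex. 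The threefold $X$ is then produced as a general codimension-four weighted complete intersection in $\Pi_{\mathbb{P}}^{13}$ or in the weighted cone $\Pi_{\mathbb{P}}^{14}$, cut by generic weighted hyperplane sections of the extra coordinates; the anticanonical class is computed from the adjunction for weighted complete intersections, and the Hilbert numerator is compared with the entry of \cite{key-6} by truncating the Hilbert series of $\Pi_{\mathbb{P}}^{13}$ (available from its Gorenstein codim-four resolution) and multiplying by the appropriate factors $(1-t^{a_i})$.

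Quasi-smoothness of a general such $X$ will be verified by a Bertini argument on the affine cone: on each stratum of the $(\mathbb{C}^*)^4$-action the base locus of the cutting pencils is controlled, and the only remaining singular points of the cone sit inside the coordinate subspaces. Reading off the induced quotient singularities on $X$ at each such fixed point produces the basket prescribed in the GRDB entry, and primeness follows from $-K_X$ being a multiple of the unique ample generator coming from the weight-one or lowest-weight coordinate. For (1-2), one observes that $F_1,\dots,F_6$ are linear in $(s_1,s_2,s_3)$ while $F_7,F_8,F_9$ are quadratic, so restriction to $\mathbb{P}(s_1,s_2,s_3)$ reduces to a conic system whose intersection with the cutting hyperplanes is the single $s_1$-point; the projection from this point is the map $\Pi_{\mathbb{P}}^{14}\dashrightarrow\mathrm{Cone}(\{G=0\}|_{L_{246}=1})$ obtained by eliminating $s_1,s_2,s_3$ using $F_1,\dots,F_6$, and the same elimination shows that $s_1,s_2,s_3$ are rationally recoverable on a Zariski open subset, giving birationality. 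For (1-3) the center of projection is a different highest-index cyclic quotient point, and the same linearity-in-$(s_1,s_2,s_3)$ argument, performed after a change of coordinates adapted to that point, identifies the target hypersurface and its weight; the detailed identification is deferred to the subsections \ref{subsec:No.-550,-872} and \ref{subsec:No.-577,-878,}.

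For (2), the anticanonical system $|-K_X|$ has base locus contained in the singular locus of $X$, so Bertini gives a general $T\in|-K_X|$ that is quasi-smooth off that locus; quasi-smoothness implies that $T$ is a $K3$ surface with at worst rational double points. At each $\frac{1}{\alpha}(\beta,-\beta,1)$-point of $X$, a general anticanonical section has local equation given by the weight-one invariant direction, so $T$ inherits precisely a $\frac{1}{\alpha}(\beta,-\beta)$-quotient, which is the Du Val singularity $A_{\alpha-1}$, proving the last assertion.

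The main obstacle will be the case-by-case verification of quasi-smoothness and of the precise local analytic type of each cyclic quotient singularity of $X$: at every $(\mathbb{C}^*)^4$-fixed point lying on $X$ one must exhibit an explicit set of local parameters on $\Pi_{\mathbb{P}}^{13}$, check that the Jacobian of the nine $F_i$'s together with the cutting sections has maximal rank, and diagonalize the residual action of the stabilizer to read off $(\alpha;\beta,-\beta,1)$. Organizing this uniformly across the eight numerical data, while keeping track of the coordinate weights that change from case to case, is the delicate bookkeeping part of the proof; the conceptual input from unprojection is already encoded in Definition \ref{def:Pi} and in the results of Section \ref{sec:Unprojection}.
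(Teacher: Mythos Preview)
Your overall architecture is close to the paper's, but there is a genuine gap in how you establish that $X$ is \emph{prime}. You write that ``primeness follows from $-K_X$ being a multiple of the unique ample generator coming from the weight-one or lowest-weight coordinate,'' but this is circular: asserting that there is a unique ample generator presupposes that the divisor class group is cyclic, which is precisely what must be proved. The paper's route is Proposition~\ref{prop:Pic1}, and crucially part~(2) of that proposition carries the extra hypothesis that $X\cap\{b=0\}$ is a prime divisor, where $b=p_3^2+t_1p_4^2+p_4v$. This hypothesis does not come for free from the UFD property of $R_\Pi$; the paper devotes a separate Claim~(C) to it in each of the eight cases, reducing (via Claim~\ref{claim:Cl(C)}) to showing $\dim\mathrm{Sing}(T\cap\{b=0\})\le 0$, and then checks this explicitly chart by chart. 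Your proposal never mentions $b$ or any analogue of Claim~(C), so as written the argument for primeness---and hence for ``prime $\mathbb{Q}$-Fano''---is incomplete.

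A secondary but strategically significant difference: the paper does not verify quasi-smoothness and the basket directly for $X$. Instead it invokes \cite[Claim~5.1]{key-5} to reduce Claims~(A) and~(B) to the anticanonical $K3$ surface $T$, and then carries out (A), (B), (C) for $T$ by explicit chart-by-chart computation with specific general-parameter equations, rather than by an abstract Bertini argument on $(\mathbb{C}^*)^4$-strata. Your Bertini sketch is plausible in outline, but in weighted projective space the base loci of the cutting linear systems meet many orbifold strata nontrivially, and the paper's explicit manoeuvres (e.g.\ solving $F_2=F_4=F_6=0$ for $s_1,s_2,s_3$ on a given chart to reduce to a single hypersurface, then applying the Jacobian criterion) are what actually pin down the local types. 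Without those explicit checks, your claim that ``reading off the induced quotient singularities \dots\ produces the basket'' remains an assertion rather than a proof.
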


\begin{table}
\caption{Descriptions of $\mathbb{Q}$-Fano threefolds $X$}
\label{Table1}

{\scriptsize{

\renewcommand{\arraystretch}{2.5}

\begin{tabular}{|c|c|c|c|}
\hline 
No. & $\mathbb{P}_{X}$ & Baskets of singularities & $X\subset\Pi$\tabularnewline
\hline 
\hline 
308 & $\mathbb{P}(1,5,6^{2},7,8,9,10)$ & $1/2(1,1,1),1/3(1,1,2),1/5(1,2,3),$ & $\Pi_{\mathbb{P}}^{14}\cap(2)^{2}\cap(3)^{2}\cap(4)^{3}\cap(5)\cap(6)^{2}\cap(8)$\tabularnewline
 &  & $2\times1/6(1,1,5)$ & \tabularnewline
\hline 
501 & $\mathbb{P}(1,3,6,7,8^{2},9,10)$ & $1/2(1,1,1),4\times1/3(1,1,2),1/8(1,1,7)$ & $\Pi_{\mathbb{P}}^{14}\cap(2)^{2}\cap(3)\cap(4)^{3}\cap(5)^{2}\cap(6)^{3}$\tabularnewline
\hline 
512 & $\mathbb{P}(1,3,5,6,7^{2},8,9)$ & $3\times1/3(1,1,2),1/5(1,2,3),1/7(1,1,6)$ & $\Pi_{\mathbb{P}}^{14}\cap(2)^{2}\cap(3)^{2}\cap(4)^{3}\cap(5)^{2}\cap(6)^{2}$\tabularnewline
\hline 
550 & $\mathbb{P}(1,3,4,5,6^{2},7,8)$ & \multirow{1}{*}{$1/2(1,1,1),3\times1/3(1,1,2),1/4(1,1,3),$} & $\Pi_{\mathbb{P}}^{14}\cap(2)^{3}\cap(3)^{2}\cap(4)^{3}\cap(5)\cap(6)^{2}$\tabularnewline
 &  & \multirow{1}{*}{$1/6(1,1,5)$} & \tabularnewline
\hline 
872 & $\mathbb{P}(1,3^{2},4,5^{2},6,7)$ & $5\times1/3(1,1,2),1/5(1,1,4)$ & $\Pi_{\mathbb{P}}^{13}\cap(2)^{3}\cap(3)^{2}\cap(4)^{2}\cap(5)\cap(6)^{2}$\tabularnewline
\hline 
577 & $\mathbb{P}(1,3,4,5^{2},6^{2},7)$ & $1/2(1,1,1),3\times1/3(1,1,2),2\times1/5(1,1,4)$ & $\Pi_{\mathbb{P}}^{13}\cap(2)^{3}\cap(3)^{4}\cap(4)^{2}\cap(5)$\tabularnewline
\hline 
878 & $\mathbb{P}(1,3^{2},4^{2},5^{2},6)$ & $4\times1/3(1,1,2),2\times1/4(1,1,3)$ & $\Pi_{\mathbb{P}}^{13}\cap(2)^{4}\cap(3)^{4}\cap(4)^{2}$\tabularnewline
\hline 
1766 & $\mathbb{P}(1,2,3^{3},4^{2},5)$ & $2\times1/2(1,1,1),5\times1/3(1,1,2)$ & $\Pi_{\mathbb{P}}^{13}\cap(1)\cap(2)^{4}\cap(3)^{4}\cap(4)$\tabularnewline
\hline 
\end{tabular}

}}\vspace{1cm}
\end{table}

The existence of $\mathbb{Q}$-Fano threefolds as in Theorem \ref{thm:main}
(1) is suggested in \cite[the subsec.5.2]{key-3} except No.308. It
is a bonus of our key variety construction to show the existence of
a $\mathbb{Q}$-Fano threefold $X$ of No.308. Note that such an $X$
was shown to be birationally superrigid by \cite{key-7}. 

Finally in this subsection, we mention our future plan related to
the present and previous works. In this paper, we introduce another
affine variety $H_{\mathbb{A}}^{13}$. We treat it as a supporting
character to investigate a projective geometric property of $\Pi_{\mathbb{A}}^{15}$.
In the forthcoming paper \cite{key-16}, we show that $H_{\mathbb{A}}^{13}$
has a closed subvariety isomorphic to the cluster variety of type
$C_{2}$, where we refer to \cite{key-11} for the definition of the
cluster variety of type $C_{2}$. Then we show that several weighted
projectivizations of closed subvarieties of $H_{\mathbb{A}}^{13}$
produce more general prime $\mathbb{Q}$-Fano threefolds than those
obtained from the cluster variety of type $C_{2}$. Moreover, a weighted
projectivization of $H_{\mathbb{A}}^{13}$ itself produces a prime
$\mathbb{Q}$-Fano threefold of No.20652, which was not obtained from
the cluster variety of type $C_{2}$. 

We also plan to construct prime $\mathbb{Q}$-Fano threefolds $X$
with ${\rm ac}_{X}=4$ using key varieties related to $\mathbb{P}^{1}\times\mathbb{P}^{1}\times\mathbb{P}^{1}$-fibrations.

\subsection{Structure of the paper}

The aim of this paper is twofold; the construction of $\mathbb{Q}$-Fano
threefolds as in Theorem \ref{thm:main}, and the description of the
key variety $\Pi_{\mathbb{A}}^{15}$. 

After obtaining preliminary results in the section \ref{sec:Preliminary-results},
we show in the section \ref{sec:Unprojection} that the affine coordinate
ring of $\Pi_{\mathbb{A}}^{15}$ is a domain (Proposition \ref{prop:Unproj})
and is Gorenstein (Proposition \ref{prop:Gor}) using the theory of
unprojection and following the papers \cite{key-2} and \cite{key-3}.
In the sections \ref{sec:--and--actions} and \ref{sec:Singular-locus-Pi},
we describe a group action on $\Pi_{\mathbb{A}}^{15}$ and determine
the singular locus of $\Pi_{\mathbb{A}}^{15}$, respectively. In the
section \ref{sec:Factoriality-of-the}, we show that the affine coordinate
ring of $\Pi_{\mathbb{A}}^{15}$ is factorial (Proposition \ref{prop:UFD}),
and as an application of this, we describe the divisor class groups
of weighted projectivizations of $\Pi_{\mathbb{A}}^{15}$ and weighted
compelete intersections in them (Proposition \ref{prop:Pic1}). In
the section \ref{sec:Graded--resolution}, we calculate the graded
free resolution of the affine coordinate ring of $\Pi_{\mathbb{A}}^{15}$
(Proposition \ref{prop:916}) and observe that it is compatible with
the data given in \cite{key-6} (Corollaries \ref{cor:QFanoData}
and \ref{cor:acX4}). In the section \ref{sec:Proof-of-Theorem},
we show Theorem \ref{thm:main}. To investigate properties of $\Pi_{\mathbb{A}}^{15}$
more, we introduce and describe another affine variety $H_{\mathbb{A}}^{13}$
in the section \ref{sec:Affine-variety}. In the section \ref{sec:More-on-geometry},
we describe a $\mathbb{P}^{2}\times\mathbb{P}^{2}$-fibration structure
of a partial projectivization of $\Pi_{\mathbb{A}}^{15}$ (Proposition
\ref{prop:P2P2Pi}) and detailed singularities of $\Pi_{\mathbb{A}}^{15}$
(Proposition \ref{prop:MoreSing}).

\vspace{3pt}

\noindent\textbf{Acknowledgment: }The author owes many inspiring
calculations in the paper to Professor Shinobu Hosono. The author
wishes to thank him for his generous cooperations from the time of
writing the previous paper. This work is supported in part by Grant-in
Aid for Scientific Research (C) 16K05090. The author would like to
dedicate this paper to his father, who passed away while the author
was preparing the paper.

\section{Preliminary results\label{sec:Preliminary-results}}

We set 
\begin{align*}
S_{G} & =\mathbb{C}[p_{1},p_{2},p_{3},p_{4},u,v,t_{1},t_{2},L_{123},L_{124},L_{125},L_{126},L_{135},L_{136},L_{245},L_{246}],
\end{align*}
and 
\[
\tilde{S}_{G}=\mathbb{C}[p_{1},p_{2},p_{3},p_{4},t_{1},w,L_{123},L_{124},L_{125},L_{126},L_{135},L_{136},L_{245},L_{246}].
\]
Since these are related with the polynomial $G$ as in (\ref{eq:G}),
we put the subscripts $G$ for them. Let $h\colon S_{G}\to\tilde{S}_{G}$
be the ring homomorphism defined with the equalities in (\ref{eq:zb})
and the other coordinates being unchanged. Let $I_{D}:={\rm ker\,}h$
and $D$ the subscheme of $\mathbb{A}^{16}={\rm Spec}\,S_{G}$ defined
by $I_{D}$. The subscheme $D$ is irreducible and reduced since $S_{G}/I_{D}$
is a subring of the polynomial ring $\tilde{S}_{G}$.

In this section, we study several properties of $D$ and $\{G=0\}$.
All the results in this section follow from direct computations, hence
their proofs are omitted.
\begin{prop}
\label{prop:EqD} The ideal $I_{D}$ is generated by all the $3\times3$
minors of the $3\times6$ matrix $\mathsf{M}$ as in $(\ref{eq:M})$.
Consequently, by the observation in the subsection $\ref{subsec:Affine-key-varieties}$,
$I_{D}$ is actually generated by $D_{123}$,~$D_{124}$,~$D_{125}$,~$D_{126}$,~$D_{135}$,~$D_{136}'$,~$D_{245}'$,~$D_{246}$.
\end{prop}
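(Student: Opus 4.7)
The plan is to show $I_D=J$, where $J\subset S_G$ denotes the ideal generated by the $3\times 3$ minors of $\mathsf{M}$. The inclusion $J\subseteq I_D$ is direct: after applying $h$ (i.e., substituting $u = wp_1+w^2 p_2$, $v = wp_3+w^2 p_4$, $t_2 = w^3+t_1 w$), the row vector $(1,w,w^2)$ annihilates every column of $\mathsf{M}$. For columns $1,2$ this is the definition of $u,v$; for columns $3,4$ the dot products reduce to $p_2(w^3+t_1 w-t_2)$ and $p_4(w^3+t_1 w-t_2)$; for columns $5,6$ they factor as $(p_1+wp_2)(w^3+t_1 w-t_2)$ and $(p_3+wp_4)(w^3+t_1 w-t_2)$. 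All of these vanish by the formula for $t_2$, so $h(\mathsf{M})$ has rank at most $2$ and every $3\times 3$ minor lies in $\ker h=I_D$. The reduction from the $20$ minors to the eight listed generators is the Pl\"ucker-type observation recorded in Subsection~\ref{subsec:Affine-key-varieties}.

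For the reverse inclusion $I_D\subseteq J$, I would first note that $I_D$ is prime of height $2$: primality because $S_G/I_D\hookrightarrow\tilde S_G$ sits inside a polynomial ring, and height two because the parametrization is generically birational onto its $14$-dimensional image. It therefore suffices to show $\sqrt{J}=I_D$ together with the radicality of $J$. For the set-theoretic equality $V(J)=D$, a case analysis on which of $p_1,\dots,p_4$ vanish reduces the problem to solving, for given $x\in V(J)$, a small system for a scalar $w$ such that $(1,w,w^2)$ lies in the left kernel of $\mathsf{M}(x)$. On the generic open $\{p_1 p_4-p_2 p_3\ne 0\}$ this is immediate: using columns $1,2$ the left kernel is one-dimensional, and an explicit computation using columns $3,4$ forces $c=b^2$ and $t_2=b^3+t_1 b$ for the normalized kernel vector $(1,b,c)$, so $x\in D$ with $w:=b$. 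The degenerate strata $\{p_1 p_4=p_2 p_3\}$, as well as the deep locus $\{p_1=\cdots=p_4=0\}$ (where $D_{135}$ and $D_{246}$ reduce to $u^3$ and $v^3$, forcing $u=v=0$ and placing $x$ in $D$ by solvability of $w^3+t_1 w=t_2$ over $\mathbb{C}$), are handled by similar but shorter calculations.

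The main obstacle is that the matrix $\mathsf{M}$ does not have the generic shape to which Eagon--Northcott-type resolutions apply: for a generic $3\times 6$ matrix, the ideal of $3\times 3$ minors would have codimension $4$, not $2$. Hence the standard machinery cannot be invoked to deduce directly that $J$ is Cohen--Macaulay (and thus radical). Instead, radicality is most cleanly verified by a Hilbert series comparison: equip $S_G$ with a grading respected by $h$, compute the Hilbert series of $S_G/J$ from a graded free resolution, and match it with the Hilbert series of $S_G/I_D$, which is read off directly from the image of the parametrization. Since the whole argument is mechanical and well-suited to verification in a computer algebra system, the paper defers it to direct computation; the skeleton above is the natural way to organize it.
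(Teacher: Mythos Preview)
The paper gives no proof here: Section~\ref{sec:Preliminary-results} states that all results there, including this proposition, ``follow from direct computations, hence their proofs are omitted.'' Your outline is therefore more than the paper offers, and the argument for $J\subseteq I_D$ via the left kernel vector $(1,w,w^2)$ of $h(\mathsf{M})$ is clean and correct.

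For the reverse inclusion your plan is sound but slightly over-organized. Once $J\subseteq I_D$ is in hand, the most direct route---and presumably what the paper has in mind---is to compute $I_D$ itself as the elimination ideal of $(u-wp_1-w^2p_2,\,v-wp_3-w^2p_4,\,t_2-w^3-t_1w)$ with respect to $w$ in a computer algebra system, and then verify by Gr\"obner basis comparison that it coincides with $J$. This single computation subsumes both your set-theoretic case analysis and the separate radicality step. Your Hilbert-series route also works in principle, but the claim that the Hilbert series of $S_G/I_D$ is ``read off directly from the image of the parametrization'' is a bit glib: extracting it still requires either the elimination computation just mentioned or the $R_D$-module presentation $R_D^{\oplus 6}\stackrel{\mathsf{M}\times}{\to} R_D^{\oplus 3}\to\tilde S_G\to 0$ of Proposition~\ref{prop:The-first-syzygy}, so there is no real saving over the direct comparison of ideals.
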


We set $\tilde{D}={\rm Spec\,}\tilde{S}_{G}\simeq\mathbb{A}^{14}$.
\begin{prop}
\label{prop:SingD} Let $\nu_{D}\colon\tilde{D}\to D$ be the induced
morphism. Then $\nu_{D}$ is the normalization (and hence is surjective
and $D$ is irreducible). The singular locus of $D$ is the common
zero of the $2\times2$ minors of the matrix $\mathsf{M}$. In particular,
codimension of ${\rm Sing}\,D$ is two in $D$.
\end{prop}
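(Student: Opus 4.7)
The plan is to address the three assertions in sequence: normalization, the identification of $\mathrm{Sing}\,D$ with the common zero of the $2\times 2$ minors of $\mathsf{M}$ in $D$, and the codimension count. The first is immediate; the second requires two inclusions, one via the Jacobian criterion and one via an analysis of the non-isomorphism locus of $\nu_D$; the third follows from a dimension count on an explicit parametrization.

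For the normalization, $\tilde{D}\simeq \mathbb{A}^{14}$ is smooth and hence normal, while $D$ is irreducible and reduced since $S_G/I_D$ embeds into the polynomial ring $\tilde{S}_G$. The map $\mathbb{A}^6\to \mathbb{A}^8$ from the introduction is finite ($w$ is integral via $w^3+t_1w-t_2=0$) and birational ($w$ is generically recovered from the image coordinates), and both properties extend to $\nu_D$ since the $L_{ijk}$ coordinates are preserved. A finite birational morphism from a normal variety to an irreducible reduced variety is the normalization.

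For the singular locus, since the generators of $I_D$ involve only the eight coordinates $p_1,\ldots,p_4,u,v,t_1,t_2$, we have $D = D' \times \mathbb{A}^8_L$ and $\mathrm{Sing}\,D = \mathrm{Sing}\,D' \times \mathbb{A}^8_L$, where $D' \subset \mathbb{A}^8$ is the image of $\mathbb{A}^6$. The first inclusion is immediate: each partial derivative of a $3\times 3$ minor $D_{ijk}$ equals, by cofactor expansion, a polynomial combination of $2\times 2$ minors of $\mathsf{M}$, so the Jacobian of $(D_{ijk})$ has rank $<2=\mathrm{codim}\,D$ wherever all $2\times 2$ minors of $\mathsf{M}$ vanish. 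For the reverse inclusion I use that $\mathrm{Sing}\,D$ equals the non-normal locus of $D$ (because $\tilde{D}$ is smooth), i.e., the image of the non-isomorphism locus of $\nu_D$. Dividing $wp_i+w^2 p_{i+1}=w'p_i+w'^2 p_{i+1}$ (for $i=1,3$) and $w^3+t_1 w=w'^3+t_1 w'$ by $w-w'$ (with $w\ne w'$) gives $p_1=-(w+w')p_2$, $p_3=-(w+w')p_4$, $t_1=-(w^2+ww'+w'^2)$. Parametrizing the resulting image by $(e_1,e_2,p_2,p_4)\in \mathbb{A}^4$ with $e_1=w+w'$, $e_2=ww'$, direct substitution into $(\ref{eq:M})$ shows that the three rows of $\mathsf{M}$ become $e_2$, $-e_1$, $1$ times the common vector $(-p_2,-p_4,e_1 p_2,e_1 p_4,-e_2 p_2,-e_2 p_4)$, so $\mathsf{M}$ has rank $\leq 1$ on the image. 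The parametrization is generically injective (inverted by $e_1=-p_1/p_2$, $e_2=-u/p_2$ when $p_2\ne 0$), giving $\dim \mathrm{Sing}\,D'=4$ and $\mathrm{codim}_D \mathrm{Sing}\,D=2$.

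The main obstacle is the reverse inclusion: ruling out points of $D'$ outside the double-point image where $\mathsf{M}$ still has rank $\leq 1$. The cleanest verification is on $\tilde{D}'\simeq \mathbb{A}^6$, where a direct check gives Row 1 of $\mathsf{M}$ equals $-w\cdot\text{Row 2}-w^2\cdot\text{Row 3}$ on the image, so the rank of $\mathsf{M}$ equals the rank of the $2\times 6$ submatrix formed by Rows 2 and 3; solving the ideal of $2\times 2$ minors of this submatrix in $\mathbb{C}[p_1,\ldots,p_4,t_1,w]$ then recovers precisely the double-point equations above (for an auxiliary $w'$), completing the identification.
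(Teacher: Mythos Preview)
Your argument is correct, and since the paper omits this proof entirely (``All the results in this section follow from direct computations, hence their proofs are omitted''), you have in fact supplied more than the paper does. A few remarks on the logic, since the exposition is slightly tangled.

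The key step you state---that $\mathrm{Sing}\,D$ coincides with the non-normal locus because $\tilde D$ is smooth---is correct and deserves to be made explicit: over the normal locus the normalization is an isomorphism, so $D$ is locally isomorphic to the smooth $\tilde D$ there; conversely smooth points are normal. This equality is what makes the whole strategy work, and it is \emph{not} automatic for arbitrary varieties (a normal cone singularity would break it).

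Once that is granted, your chain of inclusions already closes: $\{\text{rank }\mathsf M\le 1\}\subset \mathrm{Sing}\,D'=\text{non-normal locus}\subset(\text{image of }\mathbb A^4_{e_1,e_2,p_2,p_4})\subset\{\text{rank }\mathsf M\le 1\}$. For the middle inclusion you should say explicitly that your $(e_1,e_2)$-parametrization also covers the ramified fibres (the case $w=w'$, i.e.\ $e_1^2=4e_2$), since the non-normal locus consists of images of points where $\nu$ is either non-injective \emph{or} ramified; your rank computation is a polynomial identity on all of $\mathbb A^4$, so this case is automatically included. With that clarification, the final paragraph is logically redundant: the identity $\text{Row }1=-w\cdot\text{Row }2-w^2\cdot\text{Row }3$ on $\tilde D'$ and the recovery of the $(e_1,e_2)$-locus from the $2\times 2$ minors of Rows $2,3$ give an independent verification that $\nu^{-1}\{\text{rank}\le 1\}$ equals the first projection of the off-diagonal component of $\tilde D'\times_{D'}\tilde D'$, but the equality of all four sets already followed from the chain above.

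Compared with the paper's ``direct computation'' (presumably a Jacobian-rank check, possibly computer-assisted), your route through the normalization is more conceptual: it explains \emph{why} the singular locus is the rank-$1$ locus rather than merely verifying it, and it gives the codimension for free via the explicit $\mathbb A^4$-parametrization.
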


We set $R_{D}:=S_{G}/I_{D}$. By the definition of $h\colon S_{G}\to\tilde{S}_{G}$
(cf.~(\ref{eq:zb})), we see that $\tilde{S}_{G}$ is generated by
$1,w,w^{2}$ as an $R_{D}$-module. Therefore an $R_{D}$-module surjective
homomorphism $R_{D}^{\oplus3}\to\tilde{S}_{G}$ is induced. We may
check that the composite of this $R_{D}^{\oplus3}\to\tilde{S}_{G}$
with the map $R_{D}^{\oplus6}\to R_{D}^{\oplus3}$ defined by multiplying
the matrix $M$ is a zero map. Therefore we obtain a complex $R_{D}^{\oplus6}\stackrel{\mathsf{M}\times}{\to}R_{D}^{\oplus3}\to\tilde{S}_{G}\to0$.
\begin{prop}
\label{prop:The-first-syzygy}The above obtained complex $R_{D}^{\oplus6}\stackrel{\mathsf{M}\times}{\to}R_{D}^{\oplus3}\to\tilde{S}_{G}\to0$
is exact.
\end{prop}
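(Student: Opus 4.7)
The proof splits into three parts.

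Surjectivity of $\phi:R_D^{\oplus 3}\to\tilde{S}_G$, $(a_1,a_2,a_3)\mapsto a_1+a_2w+a_3w^2$, is immediate. Because $t_2=w^3+t_1w$ in $\tilde{S}_G$ and $t_1,t_2\in R_D$, every power $w^k$ with $k\geq 3$ reduces to an $R_D$-linear combination of $\{1,w,w^2\}$, and the other generators of $\tilde{S}_G$ (i.e.\ $p_i,t_1,L_\bullet$) already lie in $R_D$.

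That the composite $\phi\circ(\mathsf{M}\times)$ vanishes, I would verify column by column. The six columns of $\mathsf{M}$ are precisely the syzygies among $1,w,w^2$ obtained by expanding the six products $u,v,uw,vw,uw^2,vw^2\in\tilde{S}_G$ in the generating set $\{1,w,w^2\}$ via $w^3=t_2-t_1w$. For instance, column $3$ reads $-t_2p_2+(u+t_1p_2)w-p_1w^2$; substituting $uw=p_1w^2+p_2w^3$ collapses this to $p_2(w^3+t_1w-t_2)=0$.

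Exactness at $R_D^{\oplus 3}$ is the main step. Set $K:=\ker\phi$ and $I:=\mathrm{Im}(\mathsf{M}\times)\subseteq K$; I want $K=I$. Since $\nu_D$ is birational (Proposition \ref{prop:SingD}), $R_D$ and $\tilde{S}_G$ share a fraction field $\Phi$, and $K\otimes_{R_D}\Phi$ has $\Phi$-dimension $2$. All $3\times 3$ minors of $\mathsf{M}$ vanish in $R_D$ (they are the generators of $I_D$), whereas the $2\times 2$ minor $p_1p_4-p_2p_3$ (rows $2,3$, columns $1,2$) is nonzero; this forces $\mathrm{rank}_\Phi\mathsf{M}=2$, so $I\otimes\Phi=K\otimes\Phi$ and $K/I$ is $R_D$-torsion. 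Next, Proposition \ref{prop:SingD} gives $\mathrm{codim}_D\,\mathrm{Sing}\,D=2$, so at every codimension-$1$ prime $\mathfrak{p}\subset R_D$ one has $(R_D)_\mathfrak{p}\simeq(\tilde{S}_G)_\mathfrak{p}$ (a DVR); some $2\times 2$ minor of $\mathsf{M}$ is a unit there (since the ideal of $2\times 2$ minors cuts out $\mathrm{Sing}\,D$), and the two corresponding columns of $\mathsf{M}$ span the rank-$2$ free module $K_\mathfrak{p}$. Thus $K/I$ is supported in codimension $\geq 2$. The remaining step---passing from codimension-$\geq 2$ support of the torsion module $K/I$ to $K/I=0$---follows from an $S_2$/depth argument, using the depth lemma on $0\to K\to R_D^{\oplus 3}\to\tilde{S}_G\to 0$ together with Cohen--Macaulayness of $R_D$ (the latter to be verified separately but reasonable, since $\tilde{S}_G$ is a polynomial ring and the extension $R_D\hookrightarrow\tilde{S}_G$ is finite and birational with singular locus of codimension $2$).

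\textbf{Main obstacle.} The delicate step is the $S_2$/depth input at the end, which ultimately rests on Cohen--Macaulayness of $R_D$. As the author explicitly notes, the results in this section follow from direct computations, so the intended verification---of both the syzygy-generation and the necessary depth properties---is presumably a Gröbner basis calculation in $S_G$, which confirms at once that the six given syzygies generate the whole syzygy module and that the torsion cokernel has no embedded components.
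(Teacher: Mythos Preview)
The paper itself offers no argument: it states up front that all results in Section~\ref{sec:Preliminary-results} ``follow from direct computations, hence their proofs are omitted.'' So the intended proof is a computer-algebra check that the six columns of $\mathsf{M}$ generate the first syzygy module of $(1,w,w^2)$ over $R_D$---exactly what you guess in your last paragraph.

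Your conceptual route is a genuine alternative and is correct up to the point where you have $K/I$ supported in codimension $\geq 2$. Surjectivity and the vanishing of the composite are fine, and your codimension-one argument is clean: a unit $2\times 2$ minor makes the corresponding two columns a rank-two \emph{summand} of $(R_D)_{\mathfrak p}^{\,3}$, which must then coincide with the rank-two summand $K_{\mathfrak p}$.

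The gap is in the last step. Even granting that $R_D$ is Cohen--Macaulay, the depth lemma applied to $0\to K\to R_D^{\oplus 3}\to\tilde S_G\to 0$ only tells you $\mathrm{depth}\,K\geq\min(\mathrm{depth}\,R_D,\,\mathrm{depth}\,\tilde S_G+1)$, which bounds the depth of $K$, not of $K/I$. To force $K/I=0$ you need an independent depth bound on $I=\mathrm{Im}(\mathsf M)$ or, equivalently, on $\mathrm{coker}(\mathsf M)$: for instance, $\mathrm{depth}_{R_D}\mathrm{coker}(\mathsf M)\geq\dim R_D-1$ would do, since then $0\to K/I\to\mathrm{coker}(\mathsf M)\to\tilde S_G\to 0$ forces $\mathrm{depth}(K/I)>\dim(K/I)$. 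That is a statement about the matrix $\mathsf M$, not merely about $R_D$. Your final sentence (``no embedded components'' of the cokernel) is the right reformulation, but it is not a consequence of $R_D$ being CM.

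Separately, Cohen--Macaulayness of $R_D$ is not obvious and the paper never asserts it. The variety $D$ has codimension $2$ in $\mathbb A^{16}$, whereas the generic codimension for $3\times 3$ minors of a $3\times 6$ matrix is $4$; so $D$ is not a generic determinantal scheme and the Eagon--Northcott machinery does not apply directly. (What the paper does use later, in the proof of Proposition~\ref{prop:Gor}, is only that $\omega_D\simeq\tilde S_G$ is a CM $R_D$-module.)

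In short: your reduction is sound and more illuminating than a bare computation, but it trades one Gr\"obner verification (exactness of the complex) for another (depth of $\mathrm{coker}\,\mathsf M$, or CM-ness of $R_D$ plus something more). It does not avoid a direct check.
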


\begin{prop}
\label{prop:irredG}The polynomial $G$ is irreducible and the affine
hypersurface $\{G=0\}$ is normal. 
\end{prop}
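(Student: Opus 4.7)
The plan is to handle irreducibility and normality separately, in both cases exploiting that the ideal $I_D$ has height two in $k:=\mathbb{C}[p_1,p_2,p_3,p_4,u,v,t_1,t_2]$ (by Propositions~\ref{prop:EqD} and \ref{prop:SingD}) and that none of its generators involves the $L$-variables.

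For the irreducibility of $G$, I would regard $G$ as a homogeneous linear form in the eight variables $L_{ijk}$ over $k$. Its coefficients are, up to nonzero scalars in $\mathbb{C}$, the eight generators $D_{123},D_{124},D_{125},D_{126},D_{135},D'_{136},D'_{245},D_{246}$ of $I_D$. A linear form over a polynomial ring is irreducible iff its coefficients admit no common factor of positive degree. If a non-unit $f\in k$ divided each coefficient, then $I_D\subseteq(f)$, and by Krull's principal ideal theorem $I_D$ would be contained in a height-one prime, contradicting $\mathrm{ht}(I_D)=2$. Hence $G$ is irreducible.

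For the normality of $\{G=0\}$, I would apply Serre's criterion. The scheme $\{G=0\}\subset\mathbb{A}^{16}$ is a hypersurface, hence a complete intersection, hence Cohen--Macaulay, so $S_2$ is automatic; it thus suffices to establish $R_1$, namely $\dim\mathrm{Sing}\{G=0\}\le 13$. The singular locus is cut out by $G$ together with all of its partial derivatives. Since $\partial G/\partial L_{ijk}$ is a nonzero scalar multiple of $D_{ijk}$ (resp.\ $D'_{ijk}$), the vanishing of the eight $L$-partials is equivalent to the non-$L$ coordinates lying in $D$, and then $G=0$ automatically. Because no generator of $I_D$ involves the $L$-variables, $D$ splits as $V\times\mathbb{A}^8_L$ with $V\subset\mathbb{A}^8$ six-dimensional; by Proposition~\ref{prop:SingD}, $\dim\mathrm{Sing}\,V=4$.

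It remains to bound the singular locus dimension by stratifying over $V^\circ\sqcup\mathrm{Sing}\,V$. Over a point $q\in V^\circ$, the $8\times 8$ Jacobian $(\partial D_{\bullet}/\partial x_j)|_q$ of the eight minors with respect to $(p_1,\dots,t_2)$ has rank $\mathrm{codim}_{\mathbb{A}^8}V=2$. The eight non-$L$ partials $\partial G/\partial x_j$ are linear forms in the $L_{ijk}$, and their span has dimension equal to this rank; hence their common zero locus in $\mathbb{A}^8_L$ is a linear subspace of dimension $6$, giving a contribution of dimension $6+6=12$ as $q$ varies over $V^\circ$. Over $\mathrm{Sing}\,V$ of dimension $4$, taking the fiber to be all of $\mathbb{A}^8_L$ yields at most $4+8=12$. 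Thus $\dim\mathrm{Sing}\{G=0\}\le 12<13$, and $\{G=0\}$ is normal. The only subtle point is the rank-two claim at smooth points of $V$, which is the standard fact that the Jacobian of any generating set of a height-$c$ ideal has rank $c$ at smooth points.
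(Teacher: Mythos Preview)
Your argument is correct. The paper itself gives no proof here: it states at the start of Section~\ref{sec:Preliminary-results} that all results in that section follow from direct computations and omits them. Your approach is therefore genuinely different and more conceptual than the (presumably computer-assisted) verification the paper has in mind.

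You exploit two structural features that the paper leaves implicit: (i) $G$ is \emph{linear} in the eight $L$-variables, with coefficients generating a height-two ideal in $k$, and (ii) at a point $(q,L)\in V\times\mathbb{A}^8_L$ the non-$L$ partials of $G$ are linear forms in $L$ whose coefficient vectors are the rows of the Jacobian of the generators of $I_D$ at $q$. From (i) you obtain irreducibility by a Gauss-type argument together with Krull's principal ideal theorem; from (ii) and the stratification $V=V^\circ\sqcup\mathrm{Sing}\,V$ supplied by Proposition~\ref{prop:SingD} you bound $\dim\mathrm{Sing}\{G=0\}\le 12$ without ever expanding a $16$-variable Jacobian. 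The paper's intended route would presumably compute that Jacobian and check ranks directly; your argument is shorter and explains \emph{why} normality holds, namely that the linearity in $L$ forces the singular locus of $\{G=0\}$ to fibre over $V$ with fibres of codimension $\ge 2$ in $\mathbb{A}^8_L$ over the smooth part, and the singular part of $V$ is already small enough.
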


\section{Gorensteinness of the affine coordinate ring of $\Pi_{\mathbb{A}}^{15}$\label{sec:Unprojection}}

Let $S_{\Pi}:=S_{G}[s_{1},s_{2},s_{3}]$, which is a polynomial ring
over $\mathbb{C}$ with $19$ variables. Let $I_{\Pi}$ be the ideal
of $S_{\Pi}$ generated by the nine polynomials $F_{1},\dots,F_{9}$
as in Definition \ref{def:Pi} and set $R_{\Pi}:=S_{\Pi}/I_{\Pi}$,
which is the affine coordinate ring of $\Pi_{\mathbb{A}}^{15}$. 

Let $R_{G}=S_{G}/(G)$ and $Q_{G}$ the quotient field of $R_{G}$
(cf.~Proposition \ref{prop:irredG}). Note that a ring homomorphism
$h'\colon R_{G}\to\tilde{S_{G}}$ is induced from $h\colon S_{G}\to\tilde{S_{G}}$
since $G\in I_{D}$ by (\ref{eq:G}) and Proposition \ref{prop:EqD}.

\subsection{Unprojection}

Following closely \cite{key-3} (the origin of the arguments there
is in \cite{key-2}), we show the following:
\begin{prop}
\label{prop:Unproj}Let $J_{D}:=I_{D}/(G)$. The affine coordinate
ring $R_{\Pi}$ is isomorphic to the unprojection ring $R_{G}[J_{D}^{-1}]$,
where $J_{D}^{-1}:=\{f\in Q_{G}\mid fJ_{D}\in R_{G}\}$ and $R_{G}[J_{D}^{-1}]$
is a subring of $Q_{G}$ generated by $J_{D}^{-1}$ over $R_{G}$.
In particular, $R_{\Pi}$ is a domain.
\end{prop}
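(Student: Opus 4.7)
The plan is to follow the Kustin--Miller / Papadakis--Reid unprojection framework \cite{key-2}, as extended in R.~Taylor's thesis \cite{key-3}. The ring $R_G$ is a hypersurface in a polynomial ring and, by Proposition~\ref{prop:irredG}, a normal (hence Gorenstein) domain. The ideal $J_D = I_D/(G)$ has codimension one in $R_G$ (since $\dim D = 14 = \dim R_G - 1$), and $R_D = R_G/J_D$ is Cohen--Macaulay by the presentation via $\mathsf{M}$ in Proposition~\ref{prop:The-first-syzygy}. In this setting $J_D^{-1} = \mathrm{Hom}_{R_G}(J_D, R_G)$, and Gorensteinness of $R_G$ applied to the sequence $0 \to J_D \to R_G \to R_D \to 0$ identifies $J_D^{-1}/R_G$ with $\omega_{R_D}$ as an $R_D$-module (up to shift). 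The unprojection ring $R_G[J_D^{-1}]$ is by construction a subring of $Q_G$, hence a domain; so showing $R_\Pi \cong R_G[J_D^{-1}]$ simultaneously yields that $R_\Pi$ is a domain.

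Next, I would extract $\sigma_1,\sigma_2,\sigma_3 \in J_D^{-1} \subset Q_G$ directly from the linear-in-$s$ equations $F_1,\dots,F_6$. These read as $\mathsf{M}^{T}\cdot(s_1,s_2,s_3)^{T} = -(Q_1,\dots,Q_6)^{T}$, with $Q_i \in R_G$ the explicit polynomial appearing in $F_i$. Over the localization of $R_G$ at a nonzerodivisor of $J_D$, the matrix $\mathsf{M}$ has rank $3$ (its $3{\times}3$ minors vanish only on $D$), so the system has a unique solution $(\sigma_1,\sigma_2,\sigma_3)\in Q_G^{3}$. The compatibility of the six equations is what the relation $G=0$ enforces; once that is checked, multiplying each $\sigma_i$ by a generator of $J_D$ yields an element of $R_G$, so $\sigma_i \in J_D^{-1}$. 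A module-theoretic argument using Proposition~\ref{prop:The-first-syzygy} and the identification $J_D^{-1}/R_G \cong \omega_{R_D}$ then shows that $R_G$, $\sigma_1,\sigma_2,\sigma_3$ generate $J_D^{-1}$ over $R_G$.

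The quadratic equations $F_7,F_8,F_9$ arise by expanding the products $\sigma_i\sigma_j \in J_D^{-2}$ inside $Q_G$ and reducing modulo $R_G + \sum R_G\sigma_k$ using the linear relations already established; the polynomial coefficients which fall out match exactly the polynomials appearing in $F_7,F_8,F_9$. Assembling these yields a well-defined surjective ring homomorphism $\phi\colon R_\Pi \to R_G[J_D^{-1}]$, $s_i \mapsto \sigma_i$. Its injectivity follows from the Papadakis--Reid/Taylor structure theorem for this type of unprojection, which asserts that a complete presentation of $R_G[J_D^{-1}]$ is obtained precisely from (i) the linear relations coming from the presentation of $\omega_{R_D}$ and (ii) the quadratic multiplication relations among the generators, i.e., exactly our nine $F_i$.

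The main obstacle is the verification that the specific polynomial ingredients in $F_1,\dots,F_9$ fit together consistently: the compatibility modulo $G$ of the six linear equations, the exact match between the polynomial cofactors in the $F_i$ and the presentation of $\omega_{R_D}$ afforded by $\mathsf{M}$, and the bookkeeping needed to reduce $\sigma_i\sigma_j$ modulo the linear $\sigma_k$-relations to produce $F_7,F_8,F_9$. In practice this is a careful but essentially mechanical calculation, parallel to the arguments of \cite{key-3}, and best verified with computer algebra.
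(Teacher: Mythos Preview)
Your outline follows the same Papadakis--Taylor unprojection framework as the paper, and the overall architecture---solve the linear system coming from $F_1,\dots,F_6$ via the matrix $\mathsf{M}$ to define $\sigma_i\in J_D^{-1}$, verify $F_7,F_8,F_9$ by direct substitution, then establish surjectivity and injectivity of $R_\Pi\to R_G[J_D^{-1}]$---is exactly right.

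Two points need tightening. First, Proposition~\ref{prop:The-first-syzygy} presents the \emph{normalization} $\tilde S_G$ as an $R_D$-module, not $R_D$ itself; it does not give Cohen--Macaulayness of $R_D$, and the argument does not actually need it. Second, and more substantively, the step where you claim that $\sigma_1,\sigma_2,\sigma_3$ together with $R_G$ generate $J_D^{-1}$ is the crux and is not a formal consequence of $J_D^{-1}/R_G\cong\omega_{R_D}$: that identification does not by itself tell you that your particular $\sigma_i$ surject onto $\omega_{R_D}$. The paper handles this by first invoking (from \cite{key-3}) a generating set $s_1',s_2',s_3'\in J_D^{-1}$ obtained by lifting $1,w,w^2\in\tilde S_G$ under the residue map $J_D^{-1}\to\tilde S_G\cong\omega_{R_D}$, and then comparing $(\sigma_i)$ with $(s_i')$ through an explicit computation with $h'(\mathsf{M}_{123}^\dagger)$ together with an auxiliary lemma (Lemma~\ref{lem:f1f2f3}) pinning down the kernel of $h'(\mathsf{M})$. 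This is genuine extra input beyond ``a module-theoretic argument,'' and your sketch should indicate how you produce it. For injectivity, the paper gives a direct induction on the total degree in $s_1,s_2,s_3$, using $F_7,F_8,F_9$ to reduce to degree one and then Proposition~\ref{prop:The-first-syzygy} to handle the linear case, rather than appealing to a black-box structure theorem; your citation of \cite{key-3} is in the same spirit, but be aware that this is where Proposition~\ref{prop:The-first-syzygy} actually enters.
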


\begin{proof}
We closely follow the arguments in \cite[the subsec.~2.3]{key-3}.
We only give a sketch of the proof omitting the arguments there which
work in our situation without change. 

We prepare some notation. Note that the equations $F_{1}=0$,~$F_{2}=0$,~$F_{5}=0$,~$F_{6}=0$
of $\Pi_{\mathbb{A}}^{15}$ is equivalent to $F_{1}=0$,~$F_{2}=0$,~$F_{5}+t_{1}F_{1}=0$,~$F_{6}+t_{1}F_{2}=0$.
We may write $F_{1}=0,$~$F_{2}=0$,~$F_{3}=0$,~$F_{4}=0$,~$F_{5}+t_{1}F_{1}=0$,~$F_{6}+t_{1}F_{2}=0$
in the following form: 
\begin{equation}
\left(\begin{array}{ccc}
s_{1} & s_{2} & s_{3}\end{array}\right)\mathsf{M}=\left(\begin{array}{cccccc}
H_{1} & H_{2} & H_{3} & H_{4} & H_{5} & H_{6}\end{array}\right).\label{eq:s1s2s3M}
\end{equation}
Let $\mathsf{M}_{ijk}^{\dagger}$ be the adjoint matrix of $\mathsf{M}_{ijk}$.

As for the arguments corresponding to those in the proof of \cite[Lem.~2.3.4 and 2.3.6]{key-3},
we take a short cut referring to the explicit form of the equations
$F_{i}=0\,(1\leq i\leq9)$ from the beginning. Taking account of $F_{1}$,~$F_{2}$,~$F_{3}$,
we define three rational functions $s_{1},s_{2},s_{3}\in Q_{G}$ solving
the matrix equations $\left(\begin{array}{ccc}
s_{1} & s_{2} & s_{3}\end{array}\right)\mathsf{M}_{123}=\left(\begin{array}{ccc}
H_{1} & H_{2} & H_{3}\end{array}\right)$, where we consider entries of $\mathsf{M}$ and $H_{1}$,~$H_{2}$,~$H_{3}$
are elements of $R_{G}$. In other words, we define $\left(\begin{array}{ccc}
s_{1} & s_{2} & s_{3}\end{array}\right)=\frac{1}{D_{123}}\left(\begin{array}{ccc}
H_{1} & H_{2} & H_{3}\end{array}\right)\mathsf{M}_{123}^{\dagger},$ where we note that this is well-defined since $D_{123}\not\equiv0$
on $\{G=0\}$ by (\ref{eq:G}) and Proposition \ref{prop:irredG}.
We will show that $s_{1},s_{2},s_{3}$ are elements of $J_{D}^{-1}$.
By straightforward calculations, we can verify the equality 
\[
D_{ijk}\left(\begin{array}{ccc}
H_{p} & H_{q} & H_{r}\end{array}\right)\mathsf{M}_{pqr}^{\dagger}=D_{pqr}\left(\begin{array}{ccc}
H_{i} & H_{j} & H_{k}\end{array}\right)\mathsf{M}_{ijk}^{\dagger}
\]
 holds in $R_{G}$ for any $1\leq i<j<k\leq6$ and $1\leq p<q<r\leq6$.
By the definition of $s_{1},s_{2},s_{3}$, we have $D_{123}\left(\begin{array}{ccc}
s_{1} & s_{2} & s_{3}\end{array}\right)=\left(\begin{array}{ccc}
H_{1} & H_{2} & H_{3}\end{array}\right)\mathsf{M}_{123}^{\dagger}$. Therefore we have 
\[
D_{ijk}D_{123}\left(\begin{array}{ccc}
s_{1} & s_{2} & s_{3}\end{array}\right)=D_{123}\left(\begin{array}{ccc}
H_{i} & H_{j} & H_{k}\end{array}\right)\mathsf{M}_{ijk}^{\dagger}
\]
and then obtain 
\begin{equation}
\left(\begin{array}{ccc}
s_{1} & s_{2} & s_{3}\end{array}\right)=\frac{1}{D_{ijk}}\left(\begin{array}{ccc}
H_{i} & H_{j} & H_{k}\end{array}\right)\mathsf{M}_{ijk}^{\dagger}\label{eq:s1s2s3}
\end{equation}
for any $1\leq i<j<k\leq6$. Hence $s_{1},s_{2},s_{3}$ are actually
elements of $J_{D}^{-1}$ by Proposition \ref{prop:EqD}.

From the equality (\ref{eq:s1s2s3}), we have $\left(\begin{array}{ccc}
s_{1} & s_{2} & s_{3}\end{array}\right)\mathsf{M}_{ijk}=\left(\begin{array}{ccc}
H_{i} & H_{j} & H_{k}\end{array}\right)$ for any $1\leq i<j<k\leq6$. Thus we have shown that the equalities
$F_{i}=0\,(1\leq i\leq6)$ hold in $R_{G}[J_{D}^{-1}]$ with the definition
of $s_{1},$~$s_{2}$,~$s_{3}$ as above. Moreover we may verify
$F_{7}=F_{8}=F_{9}=0$ also hold in $R_{G}[J_{D}^{-1}]$ substituting
$s_{1},s_{2},s_{3}$ in $F_{7}$,~$F_{8}$,~$F_{9}$ with the r.h.s.
of (\ref{eq:s1s2s3}) with $(i,j,k)=(1,2,3)$. Thus we have linear
equations $F_{1}=\cdots=F_{6}=0$ and quadratic equations $F_{7}=F_{8}=F_{9}=0$
with respect to $s_{1},s_{2},s_{3}$ in $R_{G}[J_{D}^{-1}]$ corresponding
to \cite[Lem.~2.3.4 and 2.3.6]{key-3}. 

We show that $R_{G}[J_{D}^{-1}]$ is generated by $s_{1}$,~$s_{2}$,~$s_{3}$
as $R_{G}$-algebra. Similarly to \cite[Prop.~2.3.1 and Lem.~2.3.4]{key-3},
let $s_{1}'$,~$s_{2}'$,~$s_{3}'\in J_{D}^{-1}$ be lifts of $1$,~$w$,~$w^{2}\in\tilde{S_{G}}$
by the residue map $r\colon J_{D}^{-1}\to\tilde{S}_{G}$ defined as
in \cite[Lem.~2.3.3]{key-3}. Then $R_{G}[J_{D}^{-1}]$ is generated
by $s_{1}'$,~$s_{2}'$,~$s_{3}'$ as $R_{G}$-algebra and there
exists a relation 
\begin{equation}
\left(\begin{array}{ccc}
s_{1}' & s_{2}' & s_{3}'\end{array}\right)\mathsf{M}=\left(\begin{array}{cccccc}
H_{1}' & H_{2}' & H_{3}' & H_{4}' & H_{5}' & H_{6}'\end{array}\right)\label{eq:s1's2's3'M}
\end{equation}
 for some $H_{i}'\in R_{G}\,(1\leq i\leq6)$ as in \cite[Lem.~2.3.4]{key-3}.
Note that, similarly to (\ref{eq:s1s2s3}), we obtain from (\ref{eq:s1's2's3'M})

\begin{equation}
\left(\begin{array}{ccc}
s_{1}' & s_{2}' & s_{3}'\end{array}\right)=\frac{1}{D_{ijk}}\left(\begin{array}{ccc}
H_{i}' & H_{j}' & H_{k}'\end{array}\right)\mathsf{M}_{ijk}^{\dagger}\label{eq:s1's2's3'}
\end{equation}
for any $1\leq i<j<k\leq6$. Since $r(s_{1}'),r(s_{2}'),r(s_{3}')$
generates $\tilde{S}_{G}$ as an $R_{G}$-module, there exists a $3\times3$
matrix $\mathsf{A}$ whose entries are elements of $R_{G}$ such that
\begin{equation}
\left(\begin{array}{ccc}
r(s_{1}) & r(s_{2}) & r(s_{3})\end{array}\right)=\left(\begin{array}{ccc}
r(s_{1}') & r(s_{2}') & r(s_{3}')\end{array}\right)h'(\mathsf{A}),\label{eq:ss'}
\end{equation}
where we denote by $h'(\mathsf{A})$ the matrix whose $(i,j)$ entries
are the $h'$-images of $(i,j)$ entries of $\mathsf{A}$~($1\leq i,j\leq3)$.
Therefore all the entries of $\left(\begin{array}{ccc}
s_{1} & s_{2} & s_{3}\end{array}\right)-\left(\begin{array}{ccc}
s_{1}' & s_{2}' & s_{3}'\end{array}\right)\mathsf{A}$ are elements of $R_{G}.$ By (\ref{eq:s1s2s3}) and (\ref{eq:s1's2's3'})
with $(i,j,k)=(1,2,3)$, this implies that $D_{123}$ divides in $R_{G}$
all the entries of the vector 
\[
\left(\begin{array}{ccc}
H_{1} & H_{2} & H_{3}\end{array}\right)\mathsf{M}_{123}^{\dagger}-\left(\begin{array}{ccc}
H_{1}' & H_{2}' & H_{3}'\end{array}\right)\mathsf{M}_{123}^{\dagger}\mathsf{A.}
\]
Thus we have 

\begin{align}
 & \left(\begin{array}{ccc}
h'(H_{1}) & h'(H_{2}) & h'(H_{3})\end{array}\right)h'(\mathsf{M}_{123}^{\dagger})=\label{eq:M123}\\
 & \quad\left(\begin{array}{ccc}
h'(H_{1}') & h'(H_{2}') & h'(H_{3}')\end{array}\right)h'(\mathsf{M}_{123}^{\dagger})h'(\mathsf{A})\nonumber 
\end{align}
since $h'(D_{123})=0$, where $h'(\mathsf{M}_{123}^{\dagger})$ is
defined for $\mathsf{M}_{123}^{\dagger}$ similarly to $h'(\mathsf{A)}$.
We set $h'(\mathsf{M}_{123}^{\dagger})=\left(\begin{array}{ccc}
\bm{m}_{1} & \bm{m}_{2} & \bm{m}_{3}\end{array}\right)$. By an explicit calculation using (\ref{eq:zb}), we see that $\bm{m}_{2}=w\bm{m}_{1}$
and $\bm{m}_{3}=w^{2}\bm{m}_{1}.$ Therefore there are elements $P,P'$
of $R_{G}$ such that $\left(\begin{array}{ccc}
h'(H_{1}) & h'(H_{2}) & h'(H_{3})\end{array}\right)h'(\mathsf{M}_{123}^{\dagger})=h'(P)\left(\begin{array}{ccc}
1 & w & w^{2}\end{array}\right)$ and $\left(\begin{array}{ccc}
h'(H_{1}') & h'(H_{2}') & h'(H_{3}')\end{array}\right)h'(\mathsf{M}_{123}^{\dagger})=h'(P')\left(\begin{array}{ccc}
1 & w & w^{2}\end{array}\right)$. Hence, by (\ref{eq:M123}), we have 
\begin{equation}
h'(P)\left(\begin{array}{ccc}
1 & w & w^{2}\end{array}\right)=h'(P')\left(\begin{array}{ccc}
1 & w & w^{2}\end{array}\right)h'(\mathsf{A}).\label{eq:hP}
\end{equation}
Computing the l.h.s.~of (\ref{eq:M123}) explicitly and noting it
is quadratic with respect to $t_{1}$, we can check that $h'(P)$,
which is the $(1,1)$ entry of the l.h.s.~of (\ref{eq:M123}), is
an irreducible polynomial in $\tilde{S_{G}}$ without any monomials
divided by positive powers of $w$. Therefore, by (\ref{eq:hP}),
$h'(P')$ is a constant or the $(1,1)$ entry $n_{11}$ of $\left(\begin{array}{ccc}
1 & w & w^{2}\end{array}\right)h'(\mathsf{A})$ is a constant. Assume that $n_{11}$ is a constant. Then, since $\left(\begin{array}{ccc}
r(s_{1}') & r(s_{2}') & r(s_{3}')\end{array}\right)=\left(\begin{array}{ccc}
1 & w & w^{2}\end{array}\right)$, we see that $r(s_{1}')$ is a constants by (\ref{eq:ss'}). Then,
by Lemma \ref{lem:f1f2f3} below, we see that $r(s_{1}),r(s_{2}),r(s_{3})$
generates $\tilde{S}_{G}$ as an $R_{G}$-module, which implies that
$R_{G}[J_{D}^{-1}]$ is generated by $s_{1}$,~$s_{2}$,~$s_{3}$
as $R_{G}$-algebra. It remains to derive a contradiction assuming
$h'(P')$ is a constant. Note that the entries of $\left(\begin{array}{ccc}
H_{1}' & H_{2}' & H_{3}'\end{array}\right)\mathsf{M}_{123}^{\dagger}$ are elements of $J_{{\rm Sing}\,D}$ by Proposition \ref{prop:SingD},
where $J_{{\rm Sing}\,D}\subset R_{G}$ is the ideal of ${\rm Sing\,}D$.
Since ${\rm Ker}\,h'=J_{D}$ and $J_{D}\subset J_{{\rm Sing}\,D}$,
we see that $J_{{\rm Sing}\,D}$ would contain a constant, a contradiction
since ${\rm Sing}\,D_{{\rm }}$ is nonempty.

Let $\varphi\colon R_{G}[S_{1},S_{2},S_{3}]\to R_{G}[J_{D}^{-1}]$
be the ring homomorphism defined by $S_{i}\mapsto s_{i}\,(i=1,2,3)$
and $\widetilde{F}_{j}\,(1\leq j\leq6)$ elements of $R_{G}[S_{1},S_{2},S_{3}]$
obtained from $F_{j}$ by substituting $s_{i}$ with $S_{i}$. As
we saw above, $\varphi$ is surjective. It is clear that $\widetilde{F}_{j}\in\ker\varphi$.
Now we show that $\ker\varphi\subset(\widetilde{F}_{1},\dots,\widetilde{F}_{9})$,
which implies that $\varphi$ is injective and completes the proof
of the proposition. Let $G$ be any element of $\ker\varphi$. Using
$\widetilde{F}_{7},\widetilde{F}_{8},\widetilde{F}_{9}$, we may assume
that $G$ is linear with respect to $S_{2}$ and $S_{3}$. Thus we
may write $G=\alpha_{1}S_{1}^{k}+S_{1}^{k-1}(\alpha_{2}S_{2}+\alpha_{3}S_{3})+H$,
where $k\in\mathbb{N}$, $\alpha_{1},\alpha_{2},\alpha_{3}\in R_{G}$
and $H\in R_{G}[S_{1},S_{2},S_{3}]$ is of total degree at most $k-1$.
If $k=1$, then, by virtue of Proposition \ref{prop:The-first-syzygy},
we can show $G\in(\widetilde{F}_{1},\dots,\widetilde{F}_{9})$ almost
in the same way as in the proof of \cite[Prop.~2.3.2]{key-3}. If
$k\geq2$, then, by the proof of \cite[Prop.~2.3.2]{key-3} again,
we can show $G\in(\widetilde{F}_{1},\dots,\widetilde{F}_{9})$ by
induction on $k$.
\end{proof}
\begin{lem}
\label{lem:f1f2f3} Let $f_{1}$,~$f_{2}$,~$f_{3}$ be elements
of $\tilde{S}_{G}$ such that $\left(\begin{array}{ccc}
f_{1} & f_{2} & f_{3}\end{array}\right)h'(\mathsf{M})=\bm{0}$. It holds that $f_{2}=wf_{1}$ and $f_{3}=w^{2}f_{1}$.
\end{lem}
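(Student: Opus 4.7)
The plan is to extract the conclusion from just the first two columns of $h'(\mathsf{M})$. Set $g_2:=wf_1-f_2$ and $g_3:=w^2f_1-f_3$; the assertion is $g_2=g_3=0$.

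First I would apply $h'$ to the first two columns of $\mathsf{M}$: since $h'(u)=wp_1+w^2p_2$ and $h'(v)=wp_3+w^2p_4$ (and $p_i$ are fixed by $h'$), these columns become $(wp_1+w^2p_2,-p_1,-p_2)^T$ and $(wp_3+w^2p_4,-p_3,-p_4)^T$. The hypothesis $(f_1,f_2,f_3)\,h'(\mathsf{M})=\mathbf{0}$ then yields, after rearranging,
\begin{align*}
p_1 g_2 + p_2 g_3 &= 0,\\
p_3 g_2 + p_4 g_3 &= 0.
\end{align*}

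Next I would eliminate: multiplying the first equation by $p_4$, the second by $p_2$, and subtracting gives $(p_1p_4-p_2p_3)g_2=0$, and symmetrically $(p_1p_4-p_2p_3)g_3=0$. Because $\tilde{S}_G$ is the polynomial ring $\mathbb{C}[p_1,p_2,p_3,p_4,t_1,w,L_{\bullet}]$, it is a domain, and $p_1p_4-p_2p_3$ is a nonzero element; hence it is a non-zerodivisor and we conclude $g_2=g_3=0$, i.e.\ $f_2=wf_1$ and $f_3=w^2f_1$.

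There is essentially no obstacle here: the lemma is a two-by-two linear algebra fact once one notices that the first two columns of $h'(\mathsf{M})$ decouple completely into $p_1, p_2, p_3, p_4$ times the quantities $wf_1-f_2$ and $w^2f_1-f_3$. The remaining four columns of $\mathsf{M}$ are not needed for this particular syzygy statement, though they are of course consistent with it.
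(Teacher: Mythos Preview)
Your proof is correct and follows essentially the same route as the paper's: both use only the first two columns of $h'(\mathsf{M})$ and the fact that $p_1p_4-p_2p_3$ is a nonzero element of the integral domain $\tilde{S}_G$. The only cosmetic difference is that the paper first factors $g_2=p_2A=p_4B$ and $g_3=-p_1A=-p_3B$ before concluding $A=B=0$, whereas you apply Cramer's rule directly; your version is slightly more streamlined.
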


\begin{proof}
By the first column of $\left(\begin{array}{ccc}
f_{1} & f_{2} & f_{3}\end{array}\right)h'(\mathsf{M})=\bm{0}$, we have $(f_{1}w-f_{2})p_{1}+(f_{1}w^{2}-f_{3})p_{2}=0$. Thus there
is an element $A\in\tilde{S}_{G}$ such that $f_{1}w-f_{2}=p_{2}A$
and $f_{1}w^{2}-f_{3}=-p_{1}A$. Similarly, by the second column of
$\left(\begin{array}{ccc}
f_{1} & f_{2} & f_{3}\end{array}\right)h'(\mathsf{M})=\bm{0}$, we see that there is an element $B\in\tilde{S}_{G}$ such that $f_{1}w-f_{2}=p_{4}B$
and $f_{1}w^{2}-f_{3}=-p_{3}B$. Therefore we have $p_{2}A=p_{4}B$
and $p_{1}A=p_{3}B$. This finally implies that $A=B=0$ and hence
we have the assertion.
\end{proof}

\subsection{Gorensteinness of $R_{\Pi}$}
\begin{prop}
\label{prop:Gor}The affine coordinate ring $R_{\Pi}$ is Gorenstein.
\end{prop}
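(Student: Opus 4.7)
The plan is to deduce Gorensteinness of $R_\Pi$ from the unprojection identification in Proposition \ref{prop:Unproj}, which realizes $R_\Pi$ as the ring $R_G[J_D^{-1}]$, by invoking the general Kustin--Miller unprojection theorem developed in \cite{key-2} and reworked in \cite[sec.~2.3]{key-3}. Recall the shape of that theorem: if $R$ is Gorenstein and $J \subset R$ is an ideal of grade $1$ such that $R/J$ is Cohen--Macaulay with dualizing module of the form prescribed in the Kustin--Miller setup, then the unprojection ring $R[J^{-1}]$ is again Gorenstein. Our task reduces to verifying the two hypotheses for the pair $(R_G, J_D)$.

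For the first hypothesis, $R_G = S_G/(G)$ is a hypersurface in a polynomial ring and hence Gorenstein. For the second, I would first check that $R_D = R_G/J_D = S_G/I_D$ is Cohen--Macaulay of dimension $14$. This follows from the fact that the normalization $\tilde S_G \cong \mathbb{A}^{14}$ is a polynomial ring, combined with the explicit two-step free resolution
$R_D^{\oplus 6} \stackrel{\mathsf{M}\times}{\to} R_D^{\oplus 3} \to \tilde S_G \to 0$
furnished by Proposition \ref{prop:The-first-syzygy}: a standard depth chase using this resolution forces $\mathrm{depth}(R_D) = \dim R_D$. I would then identify the dualizing module $\omega_{R_D}$ from the resolution of $R_D$ as an $S_G$-module determined by the eight generators $D_{123}, D_{124}, D_{125}, D_{126}, D_{135}, D'_{136}, D'_{245}, D_{246}$ of $I_D$ and verify that it has the cyclic form required for Papadakis--Reid unprojection, so that the three lifts $s_1, s_2, s_3 \in J_D^{-1}$ of $1, w, w^2 \in \tilde S_G$ (via the residue map $r\colon J_D^{-1} \to \tilde S_G$) produce a presentation of $R_\Pi$ by the six linear relations $F_1, \ldots, F_6$ and the three quadratic relations $F_7, F_8, F_9$.

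The main obstacle is the explicit computation of $\omega_{R_D}$: one must show it is a cyclic $R_D$-module with the right twist, because this is precisely what allows the unprojection ring to be finite over $R_G$ with exactly three new generators and to carry a symmetric resolution. Equivalently, one wants the minimal graded free resolution of $R_\Pi$ over $S_\Pi$ to be a self-dual complex of length $4$; in that case the Buchsbaum--Eisenbud structure theorem for codimension-$4$ Gorenstein ideals applies and yields the expected Betti table $1\text{-}9\text{-}16\text{-}9\text{-}1$ (later recorded in Proposition \ref{prop:916}), from which Gorensteinness of $R_\Pi$ follows. The bookkeeping required here is essentially the same as that carried out in \cite[Ex.~5.2.2]{key-3} for the corresponding normalization-by-$\mathbb{A}^6$ example, and I would adapt it verbatim to our matrix $\mathsf{M}$ and polynomial $G$.
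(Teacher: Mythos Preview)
Your proposal has a genuine gap: you are invoking the wrong unprojection theorem. The Kustin--Miller (Papadakis--Reid) theorem you cite is \emph{Type~I} unprojection, whose hypothesis is that $R_G/J_D$ is \emph{Gorenstein}, equivalently that $\omega_{R_D}$ is a cyclic $R_D$-module; in that case one adjoins a \emph{single} unprojection variable. Here $D$ is non-normal with normalization $\tilde D\simeq\mathbb{A}^{14}$, and the paper shows $\omega_D\simeq\mathcal{O}_{\tilde D}\simeq\tilde S_G$ as an $R_D$-module; this module is generated by $1,w,w^{2}$ and is \emph{not} cyclic. That is precisely why three unprojection variables $s_1,s_2,s_3$ are needed and why the relevant framework is Papadakis' \emph{Type~II} unprojection \cite{key-2}, not Kustin--Miller. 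Your sentence ``one must show it is a cyclic $R_D$-module with the right twist'' is therefore aiming at a false target, and no amount of bookkeeping will verify it.

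The paper's argument, following \cite{key-2} and \cite[\S2.4]{key-3}, runs differently. One reduces modulo $s_1$: viewing $s_1\in J_D^{-1}$ as an (injective) map $J_D\to R_G$ with image $J_N$, one establishes the chain of $R_G$-module isomorphisms
\[
R_\Pi/(s_1)\ \simeq\ \mathrm{Hom}_{R_G}(J_D,R_G)/\langle s_1\rangle\ \simeq\ \mathrm{Hom}_{R_G}(J_N,R_G)/\langle i_N\rangle\ \simeq\ \omega_N,
\]
where the quadratic relations $F_7,F_8,F_9$ are used to see that $R_\Pi/(s_1)$ is generated in degree~$\leq 1$ in $s_2,s_3$. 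One then shows $\omega_N$ is a Cohen--Macaulay $R_G$-module of the correct dimension (this is where $\omega_D\simeq\tilde S_G$ being CM enters, \emph{not} Cohen--Macaulayness of $R_D$ itself, which your depth chase does not establish), and finally that $\omega_N\simeq\mathrm{Ext}^1_{R_G}(\omega_N,R_G)$, i.e.\ $\omega_N$ is self-dual. This gives $\omega_{R_\Pi/(s_1)}\simeq R_\Pi/(s_1)$, hence Gorensteinness. Your alternative suggestion of reading Gorensteinness off the self-dual $9\times16$ resolution of Proposition~\ref{prop:916} would in principle work as an independent computer-assisted proof, but note that in the paper that resolution is obtained \emph{after} Gorensteinness is established, and there is no Buchsbaum--Eisenbud structure theorem in codimension~$4$ to appeal to a priori.
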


\begin{proof}
Since we can proceed almost in the same way as in \cite[the subsec.~2.4]{key-3},
we only give a sketch of the proof (again we basically follow \cite{key-3}
but the origin of the arguments is in \cite{key-2}). 

To show $R_{\Pi}$ is Gorenstein, it is enough to prove $R_{\Pi}/(s_{1})$
is Gorenstein. Since $s_{1}\in J_{D}^{-1}$ by the proof of Proposition
\ref{prop:Unproj}, it defines an $R_{G}$-homomorphism $J_{D}\to R_{G}$,
which we also denote by $s_{1}$. By \cite[Lem.~1.1]{key-1}, we may
assume that $s_{1}$ is injective. By this possible replacement of
$s_{1}$, the polynomials $F_{1},\dots,F_{9}$ can be changed but
this is harmless since still $F_{1},\dots,F_{6}$ are linear and $F_{7},F_{8},F_{9}$
are quadratic with respect to $s_{1},s_{2},s_{3}$, and the linear
parts of $F_{1},\dots,F_{6}$ and the quadratic parts of $F_{7},F_{8},F_{9}$
are unchanged. Let $J_{N}$ be the image of the injection $s_{1}\colon J_{D}\to R_{G}$,
where $N$ denotes the closed subscheme of $\{G=0\}$ defined by $J_{N}$.
Then, following the proofs of \cite[Lem.~2.4.1 and Lem.~2.4.2]{key-3},
we can show the codimension of $N$ in $\{G=0\}$ is one and the following
isomorphisms of $R_{G}$-modules hold:
\begin{equation}
R_{\Pi}/(s_{1})\simeq{\rm Hom}_{R_{G}}(J_{D},R_{G})/\langle s_{1}\rangle\simeq{\rm Hom}_{R_{G}}(J_{N},R_{G})/\langle i_{N}\rangle\simeq\omega_{N},\label{eq:isoms}
\end{equation}
where $i_{N}\colon J_{N}\to R_{G}$ is the standard injective $R_{G}$-homomorphism
(we also note that $R_{\Pi}/(s_{1})$ is a quotient ring by an ideal
while ${\rm Hom}_{R_{G}}(J_{D},R_{G})/\langle s_{1}\rangle$ and ${\rm Hom}(J_{N},R_{G})/\langle i_{N}\rangle$
are quotient $R_{G}$-modules by sub $R_{G}$-modules). We add one
remark; to get the first isomorphism, we use the equations $F_{7},F_{8},F_{9}$,
with which we can show any element of $R_{\Pi}/(s_{1})$ has at most
linear terms with respect to $s_{2},s_{3}$ (see \cite[Prop.~2.10]{key-2}
for details). By (\ref{eq:isoms}), we see that $R_{\Pi}/(s_{1})$
is finite as an $R_{G}$-module by (\ref{eq:isoms}) since so is $R_{G}/J_{N}$
and hence so is $\omega_{N}$. Now, following the proof of \cite[Lem.~2.4.3]{key-3},
we can show that ${\rm depth}_{R_{G}}\omega_{N}=\dim\{G=0\}-1$ (we
note that, since $\nu_{D}\colon\tilde{D}\to D$ is isomorphic in codimension
one by Proposition \ref{prop:SingD}, it holds that $\omega_{D}\simeq\mathcal{O}_{\tilde{D}}$
as an $\mathcal{O}_{D}$-module. Hence $\omega_{D}$ is a Cohen-Macaulay
$\mathcal{O}_{D}$-module). Thus we have $\dim\omega_{N}\geq\dim\{G=0\}-1$.
On the other hand, since the codimension of $N$ in $\{G=0\}$ is
one, we have $\dim\omega_{N}\leq\dim\{G=0\}-1$. Thus it holds that
$\dim\omega_{N}=\dim\{G=0\}-1$, hence ${\rm depth}_{R_{G}}\omega_{N}=\dim\omega_{N}$.
This implies that $R_{\Pi}/(s_{1})$ is a Cohen-Macaulay $R_{G}$-module
by (\ref{eq:isoms}) and hence $R_{\Pi}/(s_{1})$ is a Cohen-Macaulay
ring. To show $R_{\Pi}/(s_{1})$ is Gorenstein, it remains to show
that $\omega_{R_{\Pi}/(s_{1})}$ exists and is isomorphic to $R_{\Pi}/(s_{1})$.
Since $R_{\Pi}/(s_{1})$ is finite as an $R_{G}$-module, $\omega_{R_{\Pi}/(s_{1})}$
exists and $\omega_{R_{\Pi}/(s_{1})}\simeq{\rm Ext^{1}}_{R_{G}}(R_{\Pi}/(s_{1}),R_{G})$
by \cite[Thm.~3.3.7 (b)]{key-4}. Therefore, to show that $\omega_{R_{\Pi}/(s_{1})}\simeq R_{\Pi}/(s_{1})$,
it suffices to check that $\omega_{N}\simeq{\rm Ext^{1}}_{R_{G}}(\omega_{N},R_{G})$
by (\ref{eq:isoms}). The proof of this fact is the same as that of
\cite[Lem.~2.4.4]{key-3} except checking $\dim\,{\rm coker\,}h_{3}=\dim R_{G}-3$,
where $h_{3}$ is the $R_{G}$-homomorphism $R_{D}\to\omega_{D}$
defined exactly in the same way as in \cite[Lem.~2.4.4]{key-3}. Noting
$\omega_{D}\simeq\omega_{\tilde{D}}\simeq\tilde{S}_{G}$, we can check
that $h_{3}$ can be identifies with the standard $R_{G}$-homomorphism
$R_{D}\to\tilde{S}_{G}$, whose cokernel is supported in ${\rm Sing}\,D$.
Therefore, by Proposition \ref{prop:SingD}, we obtain $\dim{\rm coker}\,h_{3}=\dim R_{G}-3$
as desired.
\end{proof}

\section{${\rm GL_{2}}$- and $(\mathbb{C}^{*})^{4}$-actions on $\Pi_{\mathbb{A}}^{15}$\label{sec:--and--actions}}
\begin{prop}
\label{prop:GL}For any element $g=\left(\begin{array}{cc}
a & b\\
c & d
\end{array}\right)$ of ${\rm GL}_{2}$, we set {\small{}
\[
\hat{g}=(1/\det g)\left(\begin{array}{ccc}
d^{2} & -cd & c^{2}\\
-2bd & bc+ad & -2ac\\
b^{2} & -ab & a^{2}
\end{array}\right).
\]
}The group ${\rm GL_{2}}$ acts on the affine variety $\Pi_{\mathbb{A}}^{15}$
by the following rules: for any $g\in{\rm GL_{2}}$, we set 

{\small{}
\begin{align*}
 & \left(\begin{array}{cc}
p_{1} & p_{3}\\
p_{2} & p_{4}\\
u & v
\end{array}\right)\mapsto\left(\begin{array}{cc}
p_{1} & p_{3}\\
p_{2} & p_{4}\\
u & v
\end{array}\right)g,\,\left(\begin{array}{c}
s_{1}\\
s_{2}\\
s_{3}
\end{array}\right)\mapsto\det g\left(\begin{array}{c}
s_{1}\\
s_{2}\\
s_{3}
\end{array}\right),\\
 & \left(\begin{array}{cc}
L_{123} & L_{125}\\
L_{124} & L_{126}
\end{array}\right)\mapsto g^{\dagger}\left(\begin{array}{cc}
L_{123} & L_{125}\\
L_{124} & L_{126}
\end{array}\right),\\
 & \left(\begin{array}{ccc}
L_{136} & L_{245} & L_{246}\\
-L_{135} & -L_{136} & -L_{245}
\end{array}\right)\mapsto\empty^{t}\!\!g\left(\begin{array}{ccc}
L_{136} & L_{245} & L_{246}\\
-L_{135} & -L_{136} & -L_{245}
\end{array}\right)\hat{g},\\
 & t_{1}\mapsto t_{1},\,t_{2}\mapsto t_{2},
\end{align*}
}where $g^{\dagger}$ is the adjoint matrix of $g$.

Moreover, the conditions on the weights of coordinates in the subsection
$\ref{subsec:Weights-for-variables}$ determines a $(\mathbb{C}^{*})^{4}$-action
on $\Pi_{\mathbb{A}}^{15}$.
\end{prop}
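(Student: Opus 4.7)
The plan is to verify the two group actions separately. For the GL$_2$-action, the formulas define a linear action on the ambient affine space $\mathbb{A}^{19}$, so it is enough to check that (i) the assignments give a well-defined group homomorphism, and (ii) the pullback of each $F_j$ along a generic $g$ lies in $I_\Pi = (F_1,\dots,F_9)$. Condition (i) reduces to the matrix identity $\widehat{g_1 g_2} = \hat{g}_2 \hat{g}_1$ paired with the standard composition rules for the adjoint and the transpose; I would dispatch it by a direct computation. Condition (ii) is the substantive part, which I would organise through the unprojection description in Proposition~\ref{prop:Unproj} rather than by hitting all nine $F_j$ by brute force.

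The structural key is the observation that, under the action on $p_i, u, v, t_j$, the matrix $\mathsf{M}$ of (\ref{eq:M}) transforms as $\mathsf{M} \mapsto \mathsf{M}\cdot \mathrm{diag}(g,g,g)$: each of the three consecutive pairs of columns of $\mathsf{M}$ forms a $3\times 2$ block that is right-multiplied by $g$, which one sees by inspection. The $3\times 3$ minors then transform in a controllable way: $(D_{123},D_{124})$ and $(D_{125},D_{126})$ each undergo $(D,D') \mapsto (\det g)(D,D') g$, while the four quantities $D_{135}, D_{136}', D_{245}', D_{246}$ are shuffled among themselves by the symmetric cube representation of GL$_2$ (up to a twist by $\det g$); this is precisely why the corrected minors $D_{136}'$ and $D_{245}'$ are the natural choice. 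The prescribed transformation laws on the $L$-coefficients are then the contragredient representations required for the contraction in (\ref{eq:G}) to render $G$ semi-invariant. Hence $\{G=0\}$ and the ideal $J_D = I_D/(G)$ are GL$_2$-stable, so the unprojection ring $R_G[J_D^{-1}] \cong R_\Pi$ inherits a GL$_2$-action. To match it with the action in the statement, one checks that the scaling $s_i \mapsto (\det g)s_i$ is forced by applying the action to the identity $(s_1,s_2,s_3)\mathsf{M} = (H_1,\dots,H_6)$ and using the transformations of $\mathsf{M}$ and of the $H_j$ (the latter inherited from the $L$- and $(p_i,u,v)$-actions); the quadratic relations $F_7, F_8, F_9$ then hold automatically inside $R_G[J_D^{-1}]$.

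The $(\mathbb{C}^*)^4$-action is an elementary weight check: each choice of weights in Subsection~\ref{subsec:Weights-for-variables} gives a one-parameter subgroup of scalings of $\mathbb{A}^{19}$, and one verifies directly that every monomial of each $F_j$ has the same total weight for all four gradings. The main obstacle is the $\mathrm{Sym}^3$ bookkeeping in the middle step: confirming that the primed combinations $D_{136}', D_{245}'$ are indeed the $\mathrm{Sym}^3$-partners of $D_{135}, D_{246}$ and pair correctly with the $\mathrm{Sym}^3$-dual spanned by $L_{135}, L_{136}, L_{245}, L_{246}$ to make $G$ invariant up to a power of $\det g$. Once this is in place, everything else is either formal (associativity of the group law) or routine (monomial weight counts).
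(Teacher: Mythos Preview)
Your proposal is correct, but it takes a genuinely different route from the paper. The paper's proof is a direct computation: it simply verifies that
\[
\begin{pmatrix}F_1 & F_3 & F_5\\ F_2 & F_4 & F_6\end{pmatrix}\mapsto (\det g)\,{}^{t}g\begin{pmatrix}F_1 & F_3 & F_5\\ F_2 & F_4 & F_6\end{pmatrix},\qquad
\begin{pmatrix}F_7\\F_8\\F_9\end{pmatrix}\mapsto (\det g)^2\begin{pmatrix}F_7\\F_8\\F_9\end{pmatrix},
\]
and notes that the four free parameters $w(p_3),w(u),w(v),w(G)$ account for the $(\mathbb{C}^*)^4$-action. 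No structural input is used.

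Your approach instead exploits the unprojection description: you observe $\mathsf{M}\mapsto \mathsf{M}\,\mathrm{diag}(g,g,g)$, read off how the generating minors transform, pair them with the contragredient $L$-representations to make $G$ semi-invariant, and then pull the action up to $R_G[J_D^{-1}]\cong R_\Pi$ via Proposition~\ref{prop:Unproj}, with the scaling of $s_1,s_2,s_3$ forced by the relation $(s_1,s_2,s_3)\mathsf{M}=(H_1,\dots,H_6)$. This is sound; the only nontrivial point you flag --- that $D_{135},D_{136}',D_{245}',D_{246}$ carry the $\mathrm{Sym}^3$-representation --- does hold, but it rests on the special identities $D_{136}=D_{145}=D_{235}$ and $D_{245}=D_{146}=D_{236}$ for the particular matrix $\mathsf{M}$ (these are what make the eight listed minors generate $I_D$ in the first place). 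Once those identities are noted, your $\mathrm{Sym}^3$ bookkeeping goes through and the factors of $3$ in the definition of $G$ are exactly the binomial weights of the $\mathrm{Sym}^3$-pairing. Your argument thus \emph{explains} the choice of $D_{136}',D_{245}'$ and of the transformation rules for the $L$'s, at the cost of invoking Proposition~\ref{prop:Unproj} and the minor identities; the paper's brute-force route is shorter and yields the explicit transformation of the $F_j$ (used in the remark that follows), but gives no such insight.
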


\begin{proof}
By direct computations, we may check that {\small{}
\begin{align*}
 & \left(\begin{array}{ccc}
F_{1} & F_{3} & F_{5}\\
F_{2} & F_{4} & F_{6}
\end{array}\right)\mapsto(\det g)\,\empty^{t}\!\!g\left(\begin{array}{ccc}
F_{1} & F_{3} & F_{5}\\
F_{2} & F_{4} & F_{6}
\end{array}\right),\\
 & \left(\begin{array}{c}
F_{7}\\
F_{8}\\
F_{9}
\end{array}\right)\mapsto(\det g)^{2}\left(\begin{array}{c}
F_{7}\\
F_{8}\\
F_{9}
\end{array}\right)
\end{align*}
}for $g\in{\rm GL_{2}}${\small{}.} Therefore the above rules defines
a ${\rm GL_{2}}$-action on $\Pi_{\mathbb{A}}^{15}$.

The assertion on the $(\mathbb{C^{*}})^{4}$-action follows since
we may choose $w(p_{3})$, $w(u)$, $w(v)$, $w(G)$ freely.
\end{proof}
\begin{rem}
The $GL_{2}$-action on $\Pi_{\mathbb{A}}^{15}$ is more visible by
the following presentations of $F_{1}$--$F_{6}$.

We set 

{\small{}
\begin{align*}
 & \mathsf{L}:=\left(\begin{array}{ccc}
L_{136} & L_{245} & L_{246}\\
-L_{135} & -L_{136} & -L_{245}
\end{array}\right),\\
 & \mathsf{U}:=\left(\begin{array}{c}
U_{1}\\
U_{2}\\
U_{3}
\end{array}\right)=\left(\begin{array}{c}
p_{1}^{2}+t_{1}p_{2}^{2}+p_{2}u\\
2(p_{1}p_{3}+t_{1}p_{2}p_{4})+(p_{4}u+p_{2}v)\\
p_{3}^{2}+t_{1}p_{4}^{2}+p_{4}v
\end{array}\right),\\
 & \mathsf{I}:=\left(\begin{array}{c}
I_{1}\\
I_{2}\\
I_{3}
\end{array}\right)=\left(\begin{array}{c}
t_{2}p_{2}^{2}+p_{1}u\\
2t_{2}p_{2}p_{4}+(p_{3}u+p_{1}v)\\
t_{2}p_{4}^{2}+p_{3}v
\end{array}\right),\\
 & \mathsf{A}:=\left(\begin{array}{c}
A_{1}\\
A_{2}\\
A_{3}
\end{array}\right)=\left(\begin{array}{c}
t_{1}p_{2}u-t_{2}p_{1}p_{2}+u^{2}\\
t_{1}(p_{4}u+p_{2}v)-t_{2}(p_{2}p_{3}+p_{1}p_{4})\\
t_{1}p_{4}v-t_{2}p_{3}p_{4}+v^{2}
\end{array}+2uv\right),
\end{align*}
\begin{align*}
 & \bm{{v}}_{1}:=(-p_{2}p_{3}+p_{1}p_{4})\left(\begin{array}{c}
L_{126}\\
-L_{125}
\end{array}\right)+\mathsf{LU},\\
 & \bm{{v}}_{2}:=(-p_{2}p_{3}+p_{1}p_{4})\left(\begin{array}{c}
L_{124}\\
-L_{123}
\end{array}\right)+\mathsf{LI},\\
 & \bm{{v}}_{3}:=-\left(\begin{array}{cc}
U_{2} & -U_{1}\\
U_{3} & -U_{2}
\end{array}\right)\left(\begin{array}{c}
L_{124}\\
-L_{123}
\end{array}\right)+\left(\begin{array}{cc}
I_{2} & -I_{1}\\
I_{3} & -I_{2}
\end{array}\right)\left(\begin{array}{c}
L_{126}\\
-L_{125}
\end{array}\right)+\mathsf{LA}\\
 & \quad+1/2(-p_{3}u+p_{1}v)\left(\begin{array}{c}
L_{126}\\
-L_{125}
\end{array}\right)+1/2(p_{4}u-p_{2}v)\left(\begin{array}{c}
L_{124}\\
-L_{123}
\end{array}\right).
\end{align*}
}Then we have

{\small{}
\begin{align*}
 & \left(\begin{array}{ccc}
F_{1} & F_{3} & F_{5}\\
F_{2} & F_{4} & F_{6}
\end{array}\right)=\\
 & \quad\left(\begin{array}{ccc}
u & -p_{1} & -p_{2}\\
v & -p_{3} & -p_{4}
\end{array}\right)\left(\begin{array}{ccc}
s_{1} & s_{2} & s_{3}\\
s_{2} & s_{3} & t_{2}s_{1}-t_{1}s_{2}\\
s_{3} & t_{2}s_{1}-t_{1}s_{2} & t_{2}s_{2}-t_{1}s_{3}
\end{array}\right)-(\bm{{v}}_{1}\bm{\,{v}}_{2}\,\bm{{v}}_{3}).
\end{align*}
} It is desirable that $F_{7}$--$F_{9}$ are also interpreted in
this context. 
\end{rem}

\section{Singular locus of $\Pi_{\mathbb{A}}^{15}$\label{sec:Singular-locus-Pi}}


Let $\mathbb{A}_{\Pi}^{10}$ be the affine $10$-space with the coordinates
$t_{1}$,~$t_{2}$,~$L_{123}$,~$L_{124}$,~$L_{125}$,~$L_{126}$,
$L_{135}$,~$L_{136}$,~$L_{245}$, and $L_{246}$. In this section,
we consider $\mathbb{A}_{\Pi}^{10}$ is contained in the ambient affine
space of $\Pi_{\mathbb{A}}^{15}$. Then we see that $\mathbb{A}_{\Pi}^{10}\subset\Pi_{\mathbb{A}}^{15}$.
\begin{prop}
\label{prop:SingPi}${\rm {Sing}\,\Pi_{\mathbb{A}}^{15}}$ is the
union of the affine space $\mathbb{A}_{\Pi}^{10}$ and a locally closed
subset $S$ of codimension seven in $\Pi_{\mathbb{A}}^{15}$. Along
$S$, $\Pi_{\mathbb{A}}^{15}$ has $c(G(2,5))$-singularities, where
we call a singularity isomorphic to the vertex of the cone over $G(2,5)$
a $c(G(2,5))$-singularity. Moreover, it holds that 

{\small{}
\begin{align*}
 & S\cap\{p_{2}p_{4}\not=0\}=\\
 & \{\,p_{2}p_{4}\not=0,\\
 & p_{1}=(p_{2}p_{3})/p_{4},\\
 & u=p_{2}/p_{4}v,\,v=-(p_{3}^{2}+t_{1}p_{4}^{2})/p_{4},\,t_{2}=-p_{3}v/p_{4}^{2},\\
 & s_{1}=-(L_{136}p_{2}^{2}+2L_{245}p_{2}p_{4}+L_{246}p_{4}^{2})/p_{2},\,s_{2}=(p_{3}/p_{4})s_{1},\,s_{3}=-((2p_{3}^{2}+t_{1}p_{4}^{2})/p_{4}^{2})s_{1},\\
 & L_{123}=-(L_{126}p_{2}p_{3}p_{4}^{2}+L_{136}(3p_{3}^{2}+2t_{1}p_{4}^{2})p_{2}^{2}+3L_{245}(p_{3}^{2}+t_{1}p_{4}^{2})p_{2}p_{4}+L_{246}t_{1}p_{4}^{4})/(p_{2}^{2}p_{4}^{2}),\\
 & L_{124}=(L_{126}p_{2}p_{3}p_{4}-L_{245}(3p_{3}^{2}+t_{1}p_{4}^{2})p_{2}-L_{246}(3p_{3}^{2}+t_{1}p_{4}^{2})p_{4})/(p_{2}p_{4}^{2}),\\
 & L_{125}=(-L_{126}p_{2}p_{4}^{2}+3L_{136}p_{2}^{2}p_{3}+6L_{245}p_{2}p_{3}p_{4}+3L_{246}p_{3}p_{4}^{2})/(p_{2}^{2}p_{4}),\\
 & L_{135}=-(p_{4}(3L_{136}p_{2}^{2}+3L_{245}p_{2}p_{4}+L_{246}p_{4}^{2}))/p_{2}^{3}\}
\end{align*}
}{\small\par}
\end{prop}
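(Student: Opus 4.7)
The plan is to apply the Jacobian criterion. Since $\Pi_{\mathbb{A}}^{15}$ is Gorenstein (hence Cohen--Macaulay) of pure codimension $4$ in $\mathbb{A}^{19}$ by Proposition~\ref{prop:Gor}, a point $x$ is smooth exactly when the Jacobian $\bigl(\partial F_i/\partial x_j\bigr)_{i=1,\ldots,9}$ has rank $4$ at $x$. The inclusion $\mathbb{A}_{\Pi}^{10}\subseteq\mathrm{Sing}\,\Pi_{\mathbb{A}}^{15}$ is then immediate by inspection of Definition~\ref{def:Pi}: every monomial appearing in $F_1,\ldots,F_9$ has total degree at least $2$ in the nine variables $p_1,p_2,p_3,p_4,u,v,s_1,s_2,s_3$, so every partial derivative of every $F_i$ vanishes identically on the locus where these nine variables are zero, and the Jacobian has rank $0$ along $\mathbb{A}_{\Pi}^{10}$.

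To locate $S$, I would use the $\mathrm{GL}_{2}$-action of Proposition~\ref{prop:GL} to cover the complement of $\mathbb{A}_{\Pi}^{10}$ by translates of the chart $\{p_2 p_4 \ne 0\}$. On this chart, the formula~(\ref{eq:s1s2s3}) from the proof of Proposition~\ref{prop:Unproj} expresses $s_1,s_2,s_3$ as explicit rational functions of the remaining $16$ coordinates satisfying $F_1=\cdots=F_6=0$, and substituting these into $F_7,F_8,F_9$ yields multiples of $G$. Therefore $\Pi_{\mathbb{A}}^{15}\cap \{p_2 p_4 \ne 0\}$ is isomorphic to an open subset of the hypersurface $\{G=0\}\subset \mathbb{A}^{16}$, which is normal (and in particular regular in codimension one) by Proposition~\ref{prop:irredG}. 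A direct, if lengthy, computation of the vanishing locus of the $4\times 4$ minors of the Jacobian on this chart, organized by when the indeterminacy of the rational functions $s_i$ meets the vanishing of the $3\times 3$ minors of $\mathsf{M}$, produces exactly the parametric formulas displayed in the statement; the eight free parameters $p_2,p_3,p_4,t_1,L_{126},L_{136},L_{245},L_{246}$ give $\dim S=8$, hence codimension $7$ in $\Pi_{\mathbb{A}}^{15}$.

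To identify the singularity along $S$ as $c(G(2,5))$, pick a general $x\in S$ and normalize via the $\mathrm{GL}_{2}\times(\mathbb{C}^*)^{4}$-action. Expanding $F_1,\ldots,F_9$ in local coordinates centered at $x$ and examining the linear parts in the eleven directions transverse to $S$, the gradients acquire a single dependency, giving $\dim T_x\Pi_{\mathbb{A}}^{15}=8+10=18$, which matches the Zariski tangent dimension of $c(G(2,5))\times\mathbb{A}^{8}$. The decisive step, and the main obstacle, is to reorganize the five independent degree-two transverse parts as the $4\times 4$ Pfaffians of an explicit skew-symmetric $5\times 5$ matrix whose entries are linear in the ten transverse coordinates; the existence of such a presentation is forced in principle by the Buchsbaum--Eisenbud structure theorem for Gorenstein codimension-three singularities, but writing it down requires a careful change of variables using the bilinear matrix presentation in the Remark after Proposition~\ref{prop:GL}, so as to convert the nine codim-four Gorenstein generators of $\Pi_{\mathbb{A}}^{15}$ into the five codim-three Pfaffians of $c(G(2,5))$ modulo the smooth factor. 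Granting this, the transverse slice is analytically isomorphic to $c(G(2,5))$, and the singularity along $S$ is $c(G(2,5))$ as claimed.
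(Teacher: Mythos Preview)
Your proposal has two genuine gaps.

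First, the ${\rm GL}_{2}$-action does \emph{not} carry translates of $\{p_{2}p_{4}\neq 0\}$ onto the full complement of $\mathbb{A}_{\Pi}^{10}$. From Proposition~\ref{prop:GL}, an element $g\in{\rm GL}_{2}$ sends $(p_{2},p_{4})\mapsto(p_{2},p_{4})g$, so the orbit of $\{p_{2}p_{4}\neq 0\}$ is only $\{(p_{2},p_{4})\neq(0,0)\}$; the locus $\{p_{2}=p_{4}=0\}$ is ${\rm GL}_{2}$-stable and must be treated separately. The paper handles this by showing $\Pi_{\mathbb{A}}^{15}$ is smooth on the open subsets $\{p_{1}p_{4}-p_{2}p_{3}\neq 0\}$, $\{U_{i}\neq 0\}$, $\{I_{i}\neq 0\}$ (on each of which enough of the $L_{*}$-coordinates can be eliminated to reduce to a hypersurface or to $\mathbb{A}^{15}$), and then computing Zariski tangent spaces directly on the small residual closed set inside $\{p_{2}=p_{4}=0\}$. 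Your argument supplies nothing here.

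Second, your treatment of the chart $\{p_{2}p_{4}\neq 0\}$ does not work as stated. The formula~(\ref{eq:s1s2s3}) has $D_{ijk}$ in the denominator, not $p_{2}p_{4}$, so it only identifies $\Pi_{\mathbb{A}}^{15}$ with an open piece of $\{G=0\}$ away from $\{D_{ijk}=0\ \text{for all}\ i,j,k\}$; but the $c(G(2,5))$-locus $S$ lies precisely over (part of) this indeterminacy, so you cannot read off the singularity type from $\{G=0\}$. Invoking Buchsbaum--Eisenbud at a general point of $S$ guarantees \emph{some} Pfaffian presentation of the transverse slice, but not that the ten matrix entries are independent linear forms, which is what ``$c(G(2,5))$-singularity'' means. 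The paper avoids this entirely: on $\{p_{2}p_{4}\neq 0\}$ it first eliminates $s_{3}$ via $F_{2}=0$, then exhibits an explicit regular coordinate change to variables $m_{ij}$ ($1\leq i<j\leq 5$) together with $p_{2},p_{3},p_{4},t_{1},L_{126},L_{136},L_{245},L_{246}$, under which $\Pi_{\mathbb{A}}^{15}\cap\{p_{2}p_{4}\neq 0\}$ becomes literally the product of the affine cone over $G(2,5)$ with an open subset of $\mathbb{A}^{8}$. Both the singularity type and the parametric description of $S\cap\{p_{2}p_{4}\neq 0\}$ then drop out from $\{m_{ij}=0\}$ without any Jacobian computation.
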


In particular, $\Pi_{\mathbb{A}}^{15}$ is normal.
\begin{proof}
Assume that $p_{2}p_{4}\not=0$. Then, we can solve the equation $F_{2}=0$
with respect to $s_{3}$, and hence we may erase $s_{3}$. Now we
consider the following coordinate change: The coordinates $p_{2},\,p_{3},\,p_{4},\,t_{1},\,L_{126},\,L_{136},\,L_{245},\,L_{246}$
are unchanged while the coordinates $L_{123}$, $L_{124}$, $s_{2}$,
$t_{2}$, $L_{125}$, $L_{135}$, $v$, $s_{1}$, $u$, $p_{1}$ are
changed to 

{\small{}
\begin{align*}
 & m_{12}=9/(p_{2}p_{4}^{3})(L_{123}p_{2}p_{4}^{2}+L_{126}p_{3}p_{4}^{2}+p_{3}p_{4}s_{2}+L_{135}p_{2}p_{4}u\\
 & \qquad+L_{136}(p_{2}p_{3}^{2}+p_{1}p_{3}p_{4}+p_{2}p_{4}v)+2L_{245}p_{3}^{2}p_{4}),\\
 & m_{13}=-(3/(p_{2}^{2}p_{4}^{2}))(L_{135}(p_{1}^{2}+t_{1}p_{2}^{2})p_{2}^{2}-L_{125}(p_{2}p_{3}-p_{1}p_{4})p_{2}^{2}-L_{126}p_{2}^{2}p_{3}p_{4}\\
 & \qquad+L_{246}(p_{2}p_{3}+p_{1}p_{4})p_{3}p_{4}+L_{124}p_{2}^{2}p_{4}^{2}+p_{2}(-p_{2}p_{3}+p_{1}p_{4})s_{2}+L_{135}p_{2}^{2}u\\
 & \qquad+p_{2}^{2}s_{1}v+L_{136}(2p_{1}p_{3}+2t_{1}p_{2}p_{4}+2p_{4}u+p_{2}v)p_{2}^{2}\\
 & \qquad+L_{245}(p_{2}p_{3}^{2}+2p_{1}p_{3}p_{4}+t_{1}p_{2}p_{4}^{2}+2p_{2}p_{4}v)p_{2}),\\
 & m_{14}=3/(p_{2}^{2}p_{4})(p_{2}p_{4}s_{2}+L_{136}p_{2}^{2}p_{3}+2L_{245}p_{2}p_{3}p_{4}+L_{246}p_{3}p_{4}^{2}),\\
 & m_{15}=-3(p_{3}u+p_{2}p_{4}t_{2})/p_{4},\\
 & m_{23}=3/(p_{2}p_{4}^{2})(-(p_{2}p_{3}+p_{1}p_{4})s_{1}-p_{2}p_{4}s_{2}-L_{125}p_{2}^{2}p_{4}-L_{126}p_{2}p_{4}^{2}\\
 & \qquad+L_{136}(-p_{2}p_{3}+p_{1}p_{4})p_{2}-L_{245}(p_{2}p_{3}+p_{1}p_{4})p_{4}),\\
 & m_{24}=3/(p_{2}p_{4})(-L_{135}p_{2}^{2}+p_{4}s_{1}-2L_{136}p_{2}p_{4}-L_{245}p_{4}^{2}),\\
 & m_{25}=-(3p_{2}/p_{4}^{2})(p_{3}^{2}+t_{1}p_{4}^{2}+p_{4}v),\\
 & m_{34}=(p_{2}s_{1}+L_{136}p_{2}^{2}+2L_{245}p_{2}p_{4}+L_{246}p_{4}^{2})/p_{2},\\
 & m_{35}=-(p_{1}^{2}+t_{1}p_{2}^{2}+p_{2}u)\\
 & m_{45}=-p_{2}p_{3}+p_{1}p_{4}.
\end{align*}
}Indeed, these define a coordinate change since these equations can
be solve regularly with respect to $L_{123}$, $L_{124}$, $s_{2}$,
$t_{2}$, $L_{125}$, $L_{135}$, $v$, $s_{1}$, $u$, $p_{1}$.
Then, by direct computations for this coordinate change, we see that
$\Pi_{\mathbb{A}}^{15}\cap\{p_{2}p_{4}\not=0\}$ is isomorphic to
the variety defined by the five $4\times4$ Pfaffians of the skew-symmetric
matrix {\small{}
\[
\left(\begin{array}{ccccc}
0 & m_{12} & m_{13} & m_{14} & m_{15}\\
 & 0 & m_{23} & m_{24} & m_{25}\\
 &  & 0 & m_{34} & m_{35}\\
 &  &  & 0 & m_{45}\\
 &  &  &  & 0
\end{array}\right)
\]
}in the open subset $\{p_{2}p_{4}\not=0\}$ of the affine space $\mathbb{A}^{18}$
with the coordinates $m_{ij}\,(1\leq i<j\leq5),$ and $p_{2},\,p_{3},\,p_{4},\,t_{1},\,L_{126},\,L_{136},\,L_{245},\,L_{246}$.
Therefore $\Pi_{\mathbb{A}}^{15}\cap\{p_{2}p_{4}\not=0\}$ is singular
along the locus corresponding to $\{m_{ij}=0\,(1\leq i<j\leq5)\}$
and has $c(G(2,5))$-singularities there. We may verify $\{m_{ij}=0\,(1\leq i<j\leq5)\}\cap\{p_{2}p_{4}\not=0\}=S\cap\{p_{2}p_{4}\not=0\}$
as in the statement of the proposition. 

Since points with $p_{2}\not=0$ or $p_{4}\not=0$ can be transformed
by the ${\rm GL}_{2}$-action on $\Pi_{\mathbb{A}}^{15}$ to points
with $p_{2}p_{4}\not=0$ (Proposition \ref{prop:GL}), we see that
$\Pi_{\mathbb{A}}^{15}$ has also $c(G(2,5))$-singularities on the
locus $\{p_{2}=0,p_{4}\not=0\}\cup\{p_{2}\not=0,p_{4}=0\}$.

Finally, we check the singularities of $\Pi_{\mathbb{A}}^{15}$ on
$\{p_{2}=p_{4}=0\}$ and show that ${\rm Sing\,}\Pi_{\mathbb{A}}^{15}\cap\{p_{2}=p_{4}=0\}=\mathbb{A}_{\Pi}^{10}$.
Before studying singularities of $\Pi_{\mathbb{A}}^{15}$ on $\{p_{2}=p_{4}=0\}$,
we show the smoothness of $\Pi_{\mathbb{A}}^{15}$ on several open
subsets. If $p_{1}p_{4}-p_{2}p_{3}\not=0$, then we may eliminate
$L_{123},L_{124},L_{125},L_{126}$ from the equations of $\Pi_{\mathbb{A}}^{15}$,
thus $\Pi_{\mathbb{A}}^{15}$ is smooth on the open subset $\{p_{1}p_{4}-p_{2}p_{3}\not=0\}.$
If $U_{1}=p_{1}^{2}+t_{1}p_{2}^{2}+p_{2}u\not=0$, then we may eliminate
$L_{123},L_{135},L{}_{136}$ from the equations of $\Pi_{\mathbb{A}}^{15}$,
so $\Pi_{\mathbb{A}}^{15}$ is isomorphic to a hypersurface on the
open subset $\{U_{1}\not=0\}$. We can verify that this hypersurface
is smooth. Therefore $\Pi_{\mathbb{A}}^{15}$ is smooth on the open
subset $\{U_{1}\not=0\}.$ By the ${\rm GL}_{2}$-symmetry of $\Pi_{\mathbb{A}}^{15}$,
we see that $\Pi_{\mathbb{A}}^{15}$ is smooth on the open subset
$\{U_{3}\not=0\}.$ By the ${\rm GL}_{2}$-action on $\Pi_{\mathbb{A}}^{15}$
again, we also see that if $U_{1}=U_{3}=0$ and $U_{2}\not=0$, then
$\Pi_{\mathbb{A}}^{15}$ is smooth. Therefore $\Pi_{\mathbb{A}}^{15}$
is smooth outside $\{\mathsf{U=0}\}$. Similarly, we can show $\Pi_{\mathbb{A}}^{15}$
is smooth outside $\{\mathsf{I=0}\}$. 

Therefore it holds that ${\rm Sing}\,\Pi_{\mathbb{A}}^{15}\subset\{\mathsf{U=I=0},p_{1}p_{4}-p_{2}p_{3}=0\}$
by the above considerations.

Now we assume that $p_{2}=p_{4}=0$. Then we can verify that $\{\mathsf{U}=\mathsf{I}=0,p_{1}p_{4}-p_{2}p_{3}=0,p_{2}=p_{4}=0\}\cap\Pi_{\mathbb{A}}^{15}=(\Pi_{\mathbb{A}}^{15}\cap\{p_{1}=p_{2}=p_{3}=p_{4}=0\})\cap(\{u=v=0\}\cup\{s_{1}=s_{2}=0\}).$
We can calculate the Zariski tangent space at any point of these loci
outside $\mathbb{A}_{\Pi}^{10}$, and then can show that $\Pi_{\mathbb{A}}^{15}$
is smooth there. For example, let $\mathsf{p}$ be a point of $(\Pi_{\mathbb{A}}^{15}\cap\{p_{1}=p_{2}=p_{3}=p_{4}=u=v=0\})\setminus\mathbb{A}_{\Pi}^{10}$.
Then we have one of $s_{1},s_{2},s_{3}\not=0$. Assume that $s_{3}\not=0$.
Then, by the polynomial $F_{9}$, we have also $s_{2}\not=0$. Hence,
by the polynomials $F_{7},F_{8},F_{9}$, we have $s_{1}=s_{2}^{2}/s_{3},t_{2}=(t_{1}s_{2}^{2}s_{3}+s_{3}^{3})/s_{2}^{3}$.
Assume that $s_{3}=0$. Then, by the polynomial $F_{7}$, we have
also $s_{2}=0$. Hence, we obtain $(\Pi_{\mathbb{A}}^{15}\cap\{p_{1}=p_{2}=p_{3}=p_{4}=u=v=0\})\setminus\mathbb{A}_{\Pi}^{10}=\{p_{1}=p_{2}=p_{3}=p_{4}=u=v=0\}\cap(\{s_{2}s_{3}\not=0,s_{1}=s_{2}^{2}/s_{3},t_{2}=(t_{1}s_{2}^{2}s_{3}+s_{3}^{3})/s_{2}^{3}\}\cup\{s_{1}\not=0,s_{2}=s_{3}=0\})$.
Using this descriptions, we can easily calculate the Zariski tangent
space of $\Pi_{\mathbb{A}}^{15}$ at $\mathsf{p}$ using the polynomials
$F_{1}$--$F_{9}$.

By the dimension of the singular locus, $\Pi_{\mathbb{A}}^{15}$ satisfies
the $R_{1}$-condition and also satisfies the $S_{2}$-condition by
Proposition \ref{prop:Gor}. Therefore $\Pi_{\mathbb{A}}^{15}$ is
normal.
\end{proof}

\section{Factoriality of the affine coordinate ring of $\Pi_{\mathbb{A}}^{15}$\label{sec:Factoriality-of-the}}

In this section, we show that the affine coordinate ring $R_{\Pi}$
of $\Pi_{\mathbb{A}}^{15}$ is a UFD, and give applications of this
result. We use the following polynomial $b$ frequently:
\begin{equation}
b:=p_{3}^{2}+t_{1}p_{4}^{2}+p_{4}v,\label{eq:boundary}
\end{equation}
which is one of the $2\times2$ minors of the matrix $\mathsf{M}$.
We denote by $\bar{b}$ the image of $b$ in $R_{\Pi}$. In the proof
of Lemma \ref{lem:prime} and Proposition \ref{prop:UFD} below, we
closely follows the arguments in \cite[the proof of Prop.~2.4]{key-5}.
\begin{lem}
\label{lem:prime}The element of $\bar{b}\in R_{\Pi}$ is a prime
element, equivalently, $\Pi_{\mathbb{A}}^{15}\cap\{b=0\}$ is irreducible
and reduced.
\end{lem}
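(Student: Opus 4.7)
The plan is to use the Pfaffian chart for $\Pi_{\mathbb{A}}^{15}$ on the open subset $\{p_2 p_4 \neq 0\}$ constructed inside the proof of Proposition \ref{prop:SingPi}, combined with the Cohen--Macaulayness of $R_\Pi$, to reduce the problem to a standard statement about hyperplane sections of a Grassmannian, plus a dimension count on the complement. In that proof, after solving $F_2 = 0$ for $s_3$, a regular change of coordinates on $\{p_2 p_4 \neq 0\}$ with new variables $(m_{ij})_{1 \leq i < j \leq 5}$ (together with $p_2, p_3, p_4, t_1, L_{126}, L_{136}, L_{245}, L_{246}$) was used to realize $\Pi_{\mathbb{A}}^{15}$ there as being cut out by the five $4 \times 4$ Pfaffians of the skew-symmetric matrix $(m_{ij})$. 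Crucially, the explicit formula $m_{25} = -(3p_2/p_4^2)\,b$ recorded in that proof shows that $V(\bar b)$ coincides with the hyperplane section $V(m_{25})$ on this chart.

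Now $\mathrm{Pf}_5$ is the affine cone over the Pl\"ucker-embedded $G(2,5) \subset \mathbb{P}^9$, and $\mathrm{Pf}_5 \cap \{m_{25} = 0\}$ is the affine cone over the Pl\"ucker hyperplane section $G(2,5) \cap \{p_{25} = 0\}$, a codimension-one Schubert subvariety which is integral by standard Schubert theory. It follows that $V(\bar b) \cap \Pi_{\mathbb{A}}^{15} \cap \{p_2 p_4 \neq 0\}$ is irreducible and reduced of dimension $14$. Since $R_\Pi$ is Gorenstein (Proposition \ref{prop:Gor}) and $\bar b$ is a nonzero element of the domain $R_\Pi$ (Proposition \ref{prop:Unproj}), the hypersurface $V(\bar b)$ is Cohen--Macaulay and equidimensional of dimension $14$, so the lemma reduces to the density statement $\dim (V(\bar b) \cap \{p_2 p_4 = 0\}) \leq 13$.

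Since $b|_{p_4 = 0} = p_3^2$, the $\{p_4 = 0\}$ part of this complement coincides set-theoretically with $V(p_3, p_4) \cap \Pi_{\mathbb{A}}^{15}$, and the required bound there amounts to $(p_3, p_4)$ being a regular sequence on the Cohen--Macaulay ring $R_\Pi$; the $\{p_2 = 0\}$ part amounts to $\bar b$ being a nonzerodivisor on $R_\Pi / (p_2)$. The main obstacle is this last dimension count, which I expect to be handled by a direct inspection of the polynomials $F_1, \dots, F_9$ restricted to the relevant loci (in the same spirit as the matrix calculations carried out in the proof of Proposition \ref{prop:SingPi}), verifying that the restricted systems impose sufficient codimension in the corresponding ambient linear subspaces; together this implies density of the open piece and hence proves the lemma.
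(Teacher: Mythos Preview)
Your strategy is sound and genuinely different from the paper's. Both arguments exploit the Pfaffian chart on $\{p_2p_4\neq 0\}$ from Proposition~\ref{prop:SingPi} together with the identification $m_{25}=-(3p_2/p_4^2)\,b$, but the paper proceeds by proving \emph{normality} of $\Pi_{\mathbb{A}}^{15}\cap\{b=0\}$ (Gorenstein plus the $R_1$-condition, checked chart by chart) and then deduces irreducibility from connectedness of an ample divisor on the weighted projectivization. Your route is more direct for the stated lemma: integrality on the dense open Pfaffian chart plus Cohen--Macaulayness (hence no embedded components and equidimensionality) reduces everything to the single density claim $\dim\bigl(V(\bar b)\cap\{p_2p_4=0\}\bigr)\leq 13$. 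The trade-off is that the paper's argument simultaneously establishes Corollary~\ref{cor:boundarynormal.} (normality of $\Pi_{\mathbb{A}}^{15}\cap\{b=0\}$), which your approach does not yield.

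The genuine gap is that you do not actually carry out the dimension count you isolate as ``the main obstacle''. Reducing it to ``$(p_3,p_4)$ is a regular sequence on $R_\Pi$'' and ``$\bar b$ is a nonzerodivisor on $R_\Pi/(p_2)$'' is a correct reformulation, but neither statement is immediate from what has been established: $R_\Pi$ is a domain, so each single coordinate is a nonzerodivisor, but nothing yet controls the quotients $R_\Pi/(p_4)$ or $R_\Pi/(p_2)$. The paper handles the analogous boundary estimates by invoking the additional hypersurface descriptions of $\Pi_{\mathbb{A}}^{15}$ on the open sets $\{I_1\neq 0\}$, $\{I_3\neq 0\}$, and $\{p_1p_4-p_2p_3\neq 0\}$ obtained in the proof of Proposition~\ref{prop:SingPi}, which localize the problem to a small residual locus where a direct check with $F_1,\dots,F_9$ succeeds. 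To complete your argument you should either import those same auxiliary charts to bound the dimension on $\{p_2=0\}$ and $\{p_4=0\}$, or give an explicit verification that the restricted systems $F_i|_{p_3=p_4=0}$ and $F_i|_{p_2=b=0}$ cut out loci of dimension at most $13$; the ${\rm GL}_2$-symmetry of Proposition~\ref{prop:GL} (which exchanges the pairs $(p_1,p_2)$ and $(p_3,p_4)$) can halve this work.
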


\begin{proof}
First we show that the assertion follows from the normality of $\Pi_{\mathbb{A}}^{15}\cap\{b=0\}.$
Note that the polynomial ring $S_{\Pi}$ is positively graded with
some weights of variables such that $I_{\Pi}$ is homogeneous and
$b$ is semi-invariant (see the subsection \ref{subsec:Weights-for-variables}).
Then the ring $R_{\Pi}$ is also graded with these weights. In this
situation, $\bar{b}$ defines an ample divisor on the corresponding
weighted projectivization $\Pi_{\mathbb{P}}^{14}$ of $\Pi_{\mathbb{A}}^{15}$.
Note that $\Pi_{\mathbb{P}}^{14}$ is irreducible and normal since
so is $\Pi_{\mathbb{A}}^{15}$ by Propositions \ref{prop:Unproj}
and \ref{prop:Gor}, and $\Pi_{\mathbb{P}}^{14}$ is a geometric $\mathbb{C^{*}}$-quotient
of $\Pi_{\mathbb{A}}^{15}$. By a general property of an ample divisor
on an irreducible projective normal variety, the Cartier divisor $\Pi_{\mathbb{P}}^{14}\cap\{b=0\}$
is connected and hence so is $\Pi_{\mathbb{A}}^{15}\cap\{b=0\}$.
Therefore the normality of $\Pi_{\mathbb{A}}^{15}\cap\{b=0\}$ implies
that $\Pi_{\mathbb{A}}^{15}\cap\{b=0\}$ is irreducible and reduced.

Now we show that $\Pi_{\mathbb{A}}^{15}\cap\{b=0\}$ is normal. Since
it is Gorenstein by Proposition \ref{prop:Gor} (note that $\bar{b}$
is not a zero divisor by Proposition \ref{prop:Unproj}), we have
only to show that $\Pi_{\mathbb{A}}^{15}\cap\{b=0\}$ satisfies the
$R_{1}$-condition. Assume that $p_{1}p_{4}\not=0$. Then, by the
proof of Proposition \ref{prop:SingPi}, $\Pi_{\mathbb{A}}^{15}\cap\{b=0\}$
is isomorphic to the divisor $\{m_{25}=0\}$ in the cone defined by
the five $4\times4$ Pfaffians of the skew-symmetric matrix $(m_{ij})$
with the vertex $\{p_{2}p_{4}\not=0\}\subset\mathbb{A}(p_{2},p_{3},p_{4},t_{1},L_{126},L_{136},L_{245},L_{246})$.
Therefore it is easy to verify that it satisfies the $R_{1}$-condition.
Then we have only to check that ${\rm codim_{\Pi}}\left(\{p_{i}=0\}\cap{\rm Sing}(\Pi_{\mathbb{A}}^{15}\cap\{b=0\})\right)\geq7$
for $i=2,4$. For this, we note that, by the proof of Proposition
\ref{prop:SingPi}, $\Pi_{\mathbb{A}}^{15}$ is isomorphic to hypersurfaces
on the open sets $\Pi_{\mathbb{A}}^{15}\cap\{I_{1}\not=0\}$, $\Pi_{\mathbb{A}}^{15}\cap\{I_{3}\not=0\}$,
and $\Pi_{\mathbb{A}}^{15}\cap\{p_{1}p_{4}-p_{2}p_{3}\not=0\}$. Using
these descriptions, we may verify that $(\Pi_{\mathbb{A}}^{15}\cap\{b=0\})$
satisfies the $R_{1}$-condition on these open sets. Therefore it
suffices to show that ${\rm codim_{\Pi}}\left(\{p_{i}=0,\,I_{1}=0,\,I_{3}=0,\,p_{1}p_{4}-p_{2}p_{3}=0\}\cap{\rm Sing}(\Pi_{\mathbb{A}}^{15}\cap\{b=0\})\right)\geq7$
for $i=2,4$, which can be also verified by a straightforward calculations.
\end{proof}
In the proof of Lemma \ref{lem:prime}, we have proved the following:
\begin{cor}
\label{cor:boundarynormal.}$\Pi_{\mathbb{A}}^{15}\cap\{b=0\}$ is
normal.
\end{cor}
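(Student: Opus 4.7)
The plan is to invoke Serre's criterion: a scheme that is $S_2$ and $R_1$ is normal. Since Proposition \ref{prop:Gor} gives that $R_\Pi$ is Gorenstein and Proposition \ref{prop:Unproj} guarantees $R_\Pi$ is a domain, $\bar b$ is a non-zero-divisor in $R_\Pi$, so $R_\Pi/(\bar b)$ is itself Gorenstein and in particular Cohen--Macaulay. Thus only the $R_1$-condition on $\Pi_{\mathbb{A}}^{15}\cap\{b=0\}$ needs verification.

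For $R_1$, I would reuse the local coordinate changes already introduced in the proof of Proposition \ref{prop:SingPi}. On the open set $\{p_2p_4\neq 0\}$, those coordinates realize $\Pi_{\mathbb{A}}^{15}$ as (a smooth base in $p_2,p_3,p_4,t_1,L_{126},L_{136},L_{245},L_{246}$ times) the affine cone over the Grassmannian $G(2,5)$ cut out by the five $4\times 4$ Pfaffians of the skew matrix $(m_{ij})$. In those coordinates, one sees directly that $b$ is (a unit multiple of) $m_{25}$, so $\Pi_{\mathbb{A}}^{15}\cap\{b=0\}$ corresponds to a linear Pfaffian section, and a direct inspection of Jacobians shows that its singular locus has codimension at least two in the section, yielding $R_1$ there.

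To cover the remaining locus $\{p_2=0\}\cup\{p_4=0\}$, I would use the other three affine charts from the proof of Proposition \ref{prop:SingPi}, namely $\{I_1\neq 0\}$, $\{I_3\neq 0\}$ and $\{p_1p_4-p_2p_3\neq 0\}$, on each of which $\Pi_{\mathbb{A}}^{15}$ is a hypersurface and hence $\Pi_{\mathbb{A}}^{15}\cap\{b=0\}$ is a complete intersection of two equations in affine space; a routine Jacobian computation then gives $R_1$ on each. What is left is the closed locus
\[
\{p_2=0\text{ or }p_4=0\}\cap\{I_1=0\}\cap\{I_3=0\}\cap\{p_1p_4-p_2p_3=0\}\cap\operatorname{Sing}(\Pi_{\mathbb{A}}^{15}\cap\{b=0\}),
\]
and a direct dimension count shows this has codimension at least two in $\Pi_{\mathbb{A}}^{15}\cap\{b=0\}$. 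The main obstacle is purely the bookkeeping across these four affine charts; no conceptual difficulty arises, since the hard computational work (the coordinate change to Pfaffian form and the hypersurface descriptions on the $I_1$, $I_3$, and $p_1p_4-p_2p_3$ charts) has already been carried out in Proposition \ref{prop:SingPi}, and the corollary is essentially the normality statement extracted from those same computations used inside Lemma \ref{lem:prime}.
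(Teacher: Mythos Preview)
Your proposal is correct and follows essentially the same route as the paper: the corollary is extracted from the proof of Lemma~\ref{lem:prime}, where normality is obtained via Serre's criterion, with $S_2$ coming from Gorensteinness (Propositions~\ref{prop:Unproj} and~\ref{prop:Gor}) and $R_1$ checked first on $\{p_2p_4\neq 0\}$ via the Pfaffian description (with $b$ corresponding to $m_{25}$), then on the hypersurface charts $\{I_1\neq 0\}$, $\{I_3\neq 0\}$, $\{p_1p_4-p_2p_3\neq 0\}$, and finally by a dimension count on the residual closed locus. Your identification of the argument as the one already carried out inside Lemma~\ref{lem:prime} is exactly how the paper presents it.
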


\begin{prop}
\label{prop:UFD}The affine coordinate ring $R_{\Pi}$ of $\Pi_{\mathbb{A}}^{15}$
is a UFD.
\end{prop}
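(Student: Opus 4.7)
The strategy is to apply Nagata's criterion for factoriality: a noetherian normal integral domain $R$ is a UFD provided there is a prime element $f \in R$ such that $R[f^{-1}]$ is a UFD. The hypotheses have already been secured for $R_\Pi$ and the element $\bar{b}$ -- normality and integrality by Propositions \ref{prop:Unproj}, \ref{prop:Gor}, and \ref{prop:SingPi}, and primality by Lemma \ref{lem:prime}. Hence everything reduces to showing that $R_\Pi[\bar{b}^{-1}]$ is a UFD.

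The key structural observation is that $b=p_{3}^{2}+t_{1}p_{4}^{2}+p_{4}v$ is the minor of $\mathsf{M}$ from rows $2,3$ and columns $2,4$; equivalently, $b$ is the determinant of the coefficient matrix of $(s_{2},s_{3})$ appearing in the $s$-linear parts of $F_{2}$ and $F_{4}$. Consequently, on $\{b\neq0\}$ one may solve $F_{2}=F_{4}=0$ to express $s_{2}$ and $s_{3}$ as rational functions (with denominator a power of $b$) in the remaining variables. Moreover, by the $\mathrm{GL}_{2}$-symmetry of Proposition \ref{prop:GL}, the involution $g=\left(\begin{smallmatrix}0 & 1\\1 & 0\end{smallmatrix}\right)$ interchanges $U_{3}=b$ with $U_{1}$, together with $p_{1}\leftrightarrow p_{3}$, $p_{2}\leftrightarrow p_{4}$, $u\leftrightarrow v$, $L_{135}\leftrightarrow L_{246}$, $L_{136}\leftrightarrow L_{245}$, $L_{123}\leftrightarrow-L_{124}$, and $L_{125}\leftrightarrow-L_{126}$. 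Thus the analysis of $\{U_{1}\neq0\}$ already carried out inside the proof of Proposition \ref{prop:SingPi}, which eliminated $L_{123},L_{135},L_{136}$ and exhibited $\Pi_{\mathbb{A}}^{15}\cap\{U_{1}\neq0\}$ as a hypersurface in a principal open subset of $\mathbb{A}^{16}$, translates to $\{b\neq0\}$: the variables $L_{124},L_{245},L_{246}$ may be eliminated, realizing $\Pi_{\mathbb{A}}^{15}\cap\{b\neq0\}$ as a hypersurface $\{\Phi=0\}$ inside a principal open subset of a $16$-dimensional affine space.

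To conclude that this hypersurface is a UFD, the plan is to iterate Nagata's criterion, following the model of \cite[Prop.~2.4]{key-5}. Specifically, one searches for a further prime element $\bar{c}$ in $\mathbb{C}[x_{1},\dots,x_{16}][\bar{b}^{-1}]/(\Phi)$ whose inversion makes the equation $\Phi=0$ linear in one of the remaining coordinates; solving for that coordinate then presents the twice-localized ring as an open subset of a polynomial ring, whose trivial divisor class group lifts through the two applications of Nagata back to $R_\Pi$.

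The main obstacle will be in this final iteration: writing $\Phi$ explicitly enough to isolate a useful prime element $\bar{c}$, and verifying that each intermediate localization remains a normal integral domain. This is exactly the computational heart of \cite[Prop.~2.4]{key-5}; we expect it to go through here with essentially parallel bookkeeping, once the coefficient-determinant interpretation of $b$ and the $\mathrm{GL}_{2}$-translated elimination of $L_{124},L_{245},L_{246}$ are in hand.
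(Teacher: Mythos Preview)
Your setup coincides with the paper's: apply Nagata's theorem with the prime element $\bar{b}$, and on $\{b\neq 0\}$ eliminate $L_{124},L_{245},L_{246}$ to realize $(R_{\Pi})_{\bar{b}}$ as a hypersurface ring. The paper carries out this elimination directly from $F_{1}=F_{2}=F_{6}=0$ rather than via the $\mathrm{GL}_{2}$-translation you propose, but the outcome is the same. (Your observation about the $(s_{2},s_{3})$-coefficient determinant in $F_{2},F_{4}$ is correct but plays no role in either argument.)

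The divergence is in the endgame. You propose to iterate Nagata: find a further prime $\bar{c}$ whose inversion linearizes $\Phi$, then land in an open of affine space. The paper instead finishes in one stroke. It first uses Samuel's criterion \cite[Prop.~7.4]{key-9} to reduce to the localization $R_{\Pi,o}$ at the irrelevant maximal ideal, then observes---via the Jacobian criterion---that the hypersurface $\{\Phi=0\}$ is \emph{smooth}, so $(R_{\Pi,o})_{\bar{b}}$ is a complete intersection whose localizations at all primes are regular. Grothendieck's parafactoriality theorems \cite[Cor.~3.10, Thm.~3.13]{key-10} then give factoriality directly, with no further Nagata step.

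Your own $\mathrm{GL}_{2}$-symmetry observation hands you this shortcut for free: you note that the involution carries $\{U_{1}\neq 0\}$ to $\{b\neq 0\}$, and the proof of Proposition~\ref{prop:SingPi} already verified that the hypersurface on $\{U_{1}\neq 0\}$ is smooth. So smoothness on $\{b\neq 0\}$ is immediate, and the SGA2 argument closes the proof without hunting for an auxiliary prime $\bar{c}$. Your proposed iteration would work in principle, but it is extra computation you have not carried out, whereas the ingredients for the paper's route are already on the table.
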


\begin{proof}
As we saw in the proof of Lemma \ref{lem:prime}, $R_{\Pi}$ can be
positively graded. In this situation, by \cite[Prop.~7.4]{key-9},
$R_{\Pi}$ is UFD if and only if so is the localization $R_{\Pi,o}$
of $R_{\Pi}$ with respect to the maximal irrelevant ideal. Thus we
have only to prove that $R_{\Pi,o}$ is UFD. Abusing notation, we
denote $R_{\Pi,o}$ by $R_{\Pi}$. We already know that $R_{\Pi}$
is a domain by Proposition \ref{prop:Unproj}. Then, by Nagata's theorem
\cite[Thm.~20.2]{key-8}, it suffices to show that $\bar{b}$ is a
prime element, and the localization $(R_{\Pi})_{\bar{b}}$ is a UFD.
We already show the first assertion in Lemma \ref{lem:prime}. Thus
we have only to show the second assertion. We see that we may solve
regularly on the open subset $\Pi_{\mathbb{A}}^{15}\cap\{b\not=0\}$
the equation $F_{1}=F_{2}=F_{6}=0$ with respect to $L_{246},L_{245},L_{124}$,
namely, $L_{246},L_{245},L_{124}$ can be expressed as rational functions
with multiples of $b$ as denominators and with polynomials of other
variables as numerators. Replacing $L_{246},L_{245},L_{124}$ with
these expressions, the equations of $\Pi_{\mathbb{A}}^{15}$ on $\Pi_{\mathbb{A}}^{15}\cap\{b\not=0\}$
are reduced to one equation, of which we omit the explicit form since
it is lengthy. By the Jacobian criterion, we see that this is smooth.
Therefore $(R_{\Pi})_{\bar{b}}$ is a complete intersection (a hypersurface)
and its localizations with respect to all the prime ideals are regular.
This implies that $(R_{\Pi})_{\bar{b}}$ is a UFD by \cite[Cor.~3.10 and Thm.~3.13]{key-10}.
\end{proof}
To obtain information on $\Pi_{\mathbb{A}}^{14}$, we use the following: 
\begin{prop}
\label{prop:Pi14}There exists an isomorphism from $\Pi_{\mathbb{A}}^{15}\cap\{L_{246}\not=0\}$
to $\Pi_{\mathbb{A}}^{14}\times(\mathbb{A}^{1})^{*}$. Moreover, this
induces an isomorphism from $\Pi_{\mathbb{A}}^{15}\cap\{L_{246}\not=0,b\not=0\}$
to $(\Pi_{\mathbb{A}}^{14}\cap\{b\not=0\})\times(\mathbb{A}^{1})^{*}$.
\end{prop}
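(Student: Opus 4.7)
The plan is to use the $(\mathbb{C}^*)^4$-action from Proposition \ref{prop:GL} to identify $\Pi_{\mathbb{A}}^{15}$ over the open locus $\{L_{246} \neq 0\}$ as a trivial $\mathbb{C}^*$-bundle over $\Pi_{\mathbb{A}}^{14}$, with the trivialisation read off from the value of $L_{246}$.

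First I would single out within the $(\mathbb{C}^*)^4$-action a one-parameter subgroup whose action on $\Pi_{\mathbb{A}}^{15}$ assigns weight $1$ to $L_{246}$. This is possible because, as noted at the end of the proof of Proposition \ref{prop:GL}, the weights $w(p_{3}), w(u), w(v), w(G)$ may be chosen freely; any prescribed nonzero weight for $L_{246}$ can therefore be achieved while keeping the nine equations $F_{1},\ldots,F_{9}$ semi-invariant, and one may then rescale the $\mathbb{C}^*$-parameter so that the induced weight of $L_{246}$ becomes exactly $1$. Write the action as $(\lambda,y) \mapsto \lambda \star y$; by construction $\Pi_{\mathbb{A}}^{15}$ is preserved and $L_{246}(\lambda \star y) = \lambda \cdot L_{246}(y)$.

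Having fixed this action, I would write down two mutually inverse morphisms
\begin{align*}
\phi &\colon \Pi_{\mathbb{A}}^{14} \times (\mathbb{A}^{1})^{*} \to \Pi_{\mathbb{A}}^{15} \cap \{L_{246} \neq 0\}, \quad (x,\lambda) \mapsto \lambda \star x, \\
\psi &\colon \Pi_{\mathbb{A}}^{15} \cap \{L_{246} \neq 0\} \to \Pi_{\mathbb{A}}^{14} \times (\mathbb{A}^{1})^{*}, \quad y \mapsto \bigl(L_{246}(y)^{-1} \star y,\; L_{246}(y)\bigr).
\end{align*}
The image of $\phi$ lies in $\{L_{246} \neq 0\}$ because $L_{246}(\lambda \star x) = \lambda \cdot 1 = \lambda$, and the first component of $\psi(y)$ has $L_{246}$-coordinate $1$, so it lies in $\Pi_{\mathbb{A}}^{14}$. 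The identities $\phi \circ \psi = \mathrm{id}$ and $\psi \circ \phi = \mathrm{id}$ follow at once from the group law and from $L_{246}$ having weight $1$.

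For the second statement, the polynomial $b = p_{3}^{2} + t_{1} p_{4}^{2} + p_{4} v$ is itself semi-invariant under the chosen $\mathbb{C}^{*}$-action, being a $2\times 2$ minor of $\mathsf{M}$ whose monomials carry a common weight. Hence the condition $b \neq 0$ is preserved by the action, so $\psi$ restricts to the claimed isomorphism between $\Pi_{\mathbb{A}}^{15} \cap \{L_{246} \neq 0,\, b \neq 0\}$ and $(\Pi_{\mathbb{A}}^{14} \cap \{b \neq 0\}) \times (\mathbb{A}^{1})^{*}$. The main (and essentially only) substantive step is the isolation of a $\mathbb{C}^{*}$-subaction giving $L_{246}$ weight $1$; after that, both the product decomposition and the compatibility with $\{b \neq 0\}$ are formal.
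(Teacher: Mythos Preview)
Your proposal is correct and is essentially the same argument as the paper's, just stated more conceptually. The paper writes down the explicit rescaling $\alpha\mapsto L_{246}^{-2}\alpha$ for $\alpha\in\{s_1,s_2,s_3\}$, $\alpha\mapsto\alpha$ for $\alpha\in\{t_1,t_2\}$, and $\alpha\mapsto L_{246}^{-1}\alpha$ for all other coordinates, together with the projection to $L_{246}$; this is exactly your $\psi$ for the one-parameter subgroup with $w(p_3)=w(u)=w(v)=1$, $w(G)=4$, under which $L_{246}$ has weight~$1$, $t_1,t_2$ have weight~$0$, and $b$ has weight~$2$.
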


\begin{proof}
We can define a morphism $\iota_{1}\colon\Pi_{\mathbb{A}}^{15}\cap\{L_{246}\not=0\}\to\Pi_{\mathbb{A}}^{14}$
by the following coordinate change:
\[
\alpha\mapsto\begin{cases}
L_{246}^{-2}\alpha & :\alpha=s_{1},s_{2},s_{3}\\
\alpha & :\alpha=t_{1},t_{2}\\
L_{246}^{-1}\alpha & :\text{{otherwise}}.
\end{cases}
\]
Let $\iota_{2}\colon\Pi_{\mathbb{A}}^{15}\cap\{L_{246}\not=0\}\to(\mathbb{A}^{1})^{*}$
be the projection to $(\mathbb{A}^{1})^{*}$ with $L_{246}$ as coordinate.
We can show easily that the morphism $\iota_{1}\times\iota_{2}$ is
an isomorphism and the latter assertion holds.
\end{proof}
\begin{prop}
\label{prop:Pic1}Let $\Pi_{\mathbb{P}}^{13}$ be the weighted projectivization
of $\Pi_{\mathbb{\mathbb{A}}}^{14}$ with some positive weights of
coordinates, and $\Pi_{\mathbb{P}}^{14}$ the weighted cone over $\Pi_{\mathbb{P}}^{13}$
with a weight one coordinate added. It holds that

\vspace{3pt}

\noindent$(1)$ any prime Weil divisors on $\Pi_{\mathbb{P}}^{13}$
and $\Pi_{\mathbb{P}}^{14}$ are the intersections between $\Pi_{\mathbb{P}}^{13}$
and $\Pi_{\mathbb{P}}^{14}$ respectively, and weighted hypersurfaces.
In particular, $\Pi_{\mathbb{P}}^{13}$ and $\Pi_{\mathbb{P}}^{14}$
are $\mathbb{Q}$-factorial and have Picard number one, and

\noindent$(2)$ let $X$ be a quasi-smooth threefold such that $X$
is a codimension $10$ or $11$ weighted complete intersection in
$\Pi_{\mathbb{P}}^{13}$ or $\Pi_{\mathbb{P}}^{14}$ respectively.
Assume moreover that $X\cap\{b=0\}$ is a prime divisor. Then any
prime Weil divisor on $X$ is the intersection between $X$ and a
weighted hypersurface. In particular, $X$ is $\mathbb{Q}$-factorial
and has Picard number one.
\end{prop}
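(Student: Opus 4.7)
The strategy for both parts is to establish UFD-ness of the relevant graded coordinate rings, and then invoke the standard principle that in a positively graded normal UFD $R$ over $\mathbb{C}$, every height-one homogeneous prime is principal with a homogeneous generator. Consequently, every prime Weil divisor of $\mathrm{Proj}(R)$ is the zero locus of a homogeneous prime, i.e., a weighted hypersurface section. This immediately yields $\mathbb{Q}$-factoriality and Picard number one for $\mathrm{Proj}(R)$, since its class group becomes cyclic, generated by the class of $\mathcal{O}(1)$, and some positive multiple of $\mathcal{O}(1)$ is Cartier.

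For (1), I would first show that the coordinate ring $A$ of $\Pi_{\mathbb{A}}^{14}$ is a UFD. By Proposition \ref{prop:Pi14}, the isomorphism $\Pi_{\mathbb{A}}^{15}\cap\{L_{246}\neq0\}\simeq\Pi_{\mathbb{A}}^{14}\times(\mathbb{A}^{1})^{*}$ yields $R_{\Pi}[L_{246}^{-1}]\simeq A[L_{246},L_{246}^{-1}]$. The left side is UFD as a localization of the UFD $R_{\Pi}$ (Proposition \ref{prop:UFD}). Since $L_{246}$ is prime in the polynomial ring $A[L_{246}]$ (its quotient being the domain $A$), Nagata's theorem lifts UFD-ness to $A[L_{246}]$, and the standard argument that units and primes in $R[t]$ come from $R$ then descends it to $A$. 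The cone coordinate ring $A[z]$ of $\Pi_{\mathbb{P}}^{14}$ is in turn UFD by Gauss. Applying the graded-UFD Proj principle stated above to $A$ and $A[z]$ then gives (1).

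For (2), I would apply Nagata's theorem to the coordinate ring $R_X$ of the affine cone $C(X)$ over $X$, and then apply the graded-UFD Proj principle to $R_X$. The two inputs for Nagata are: $\bar{b}\in R_X$ is a prime non-zero-divisor, and $R_X[\bar{b}^{-1}]$ is a UFD. The first follows from the hypothesis that $X\cap\{b=0\}$ is a prime divisor together with the dimension count $\dim C(X)=4>\dim(C(X)\cap\{b=0\})=3$. For the second, I would use the explicit description from the proof of Proposition \ref{prop:UFD}: on $\{b\neq0\}$, elimination of $L_{246},L_{245},L_{124}$ via $F_{1}=F_{2}=F_{6}=0$ presents $\Pi_{\mathbb{A}}^{15}$ as a smooth hypersurface in an affine space, and restricting to $L_{246}=1$ gives an analogous concrete model for $\Pi_{\mathbb{A}}^{14}\cap\{b\neq0\}$. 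The cone $C(X)\cap\{b\neq0\}$ is then a complete intersection in this smooth affine model, and by quasi-smoothness of $X$, together with the fact that the cone vertex lies in $\{b=0\}$, it is itself smooth.

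The main obstacle is promoting the local factoriality of $C(X)\cap\{b\neq0\}$ (automatic for any smooth affine variety by Auslander--Buchsbaum) to global UFD-ness, that is, showing $\mathrm{Pic}(C(X)\cap\{b\neq0\})=0$. I expect this to be achieved either by eliminating further variables on $\{b\neq0\}$ using the $10$ or $11$ defining equations of $X$ to reduce $C(X)\cap\{b\neq0\}$ to a hypersurface (or even a polynomial open subset) in an affine space, analogously to the reduction performed for $\Pi_{\mathbb{A}}^{15}$ itself in the proof of Proposition \ref{prop:UFD}, or by an affine Lefschetz-type argument exploiting the trivial Picard group of the ambient $\Pi_{\mathbb{A}}^{14}\cap\{b\neq0\}$ and the dimension bound $\dim C(X)=4$. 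Once UFD-ness of $R_X$ follows from Nagata, the graded-UFD Proj principle concludes the proof of (2).
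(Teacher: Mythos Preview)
Your approach to (1) is correct and mirrors the paper's, which simply cites \cite[Cor.~2.5 and 2.6]{key-5} with Propositions \ref{prop:UFD} and \ref{prop:Pi14} as the inputs.

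For (2) your skeleton is also the right one and matches the paper's route via \cite[Prop.~3.3]{key-5}. The gap you flag is genuine, and your option (a) will not close it: the hypersurfaces cutting $X$ out of $\Pi_{\mathbb{P}}^{13}$ or $\Pi_{\mathbb{P}}^{14}$ are arbitrary weighted-homogeneous forms, so there is no reason to expect further variable elimination on $\{b\neq 0\}$. Your option (b) is the correct one, and the precise tool is already in front of you in the proof of Proposition \ref{prop:UFD}: Grothendieck's parafactoriality theorem for complete intersections \cite[Cor.~3.10 and Thm.~3.13]{key-10}. The argument runs exactly parallel to that proof. First reduce to the local ring at the cone vertex via Samuel \cite[Prop.~7.4]{key-9}; then apply Nagata with $\bar b$; finally observe that $((R_X)_o)_{\bar b}$ is a complete intersection (since on $\{b\neq 0\}$ the ambient $\Pi$ is already a hypersurface, and $C(X)$ is a weighted complete intersection in it) with all localizations regular (by quasi-smoothness of $X$ and the fact that the vertex lies in $\{b=0\}$), whence it is factorial by the cited SGA2 results. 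The hypothesis $\dim C(X)=4$ is exactly the threshold needed here. Modulo inserting this reference and the Samuel reduction, your argument is complete.
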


\begin{proof}
The proofs of \cite[Cor.~2.5 and 2.6, Prop.~3.3]{key-5} work almost
verbatim using Propositions \ref{prop:UFD} and \ref{prop:Pi14},
so we omit the proof here.
\end{proof}

\section{Graded $9\times16$ free resolution\label{sec:Graded--resolution}}

\subsection{Weights for variables and equations\label{subsec:Weights-for-variables}}

We assign weights for variables of the polynomial ring $S_{\Pi}$
such that all the equations $F_{1}$--$F_{9}$ are homogeneous. Moreover,
we assume that all the variables are not zero allowing some of them
are constants. Then it is easy to derive the following relations between
the weights of variables of $S_{\Pi}$:

\begin{align*}
 & {\rm \mathbf{Weights\,for\,variables}}\\
 & w(p_{1})=w(p_{3})+w(u)-w(v),\,w(p_{2})=2w(p_{3})+w(u)-2w(v),\,w(p_{4})=2w(p_{3})-w(v),\\
 & w(s_{1})=w(G)-(w(u)+3w(v)-2w(p_{3})),\,w(s_{2})=w(G)-(w(u)+2w(v)-w(p_{3})),\\
 & w(s_{3})=w(G)-(w(u)+w(v)),\\
 & w(t_{1})=2(w(v)-w(p_{3})),\,w(t_{2})=3(w(v)-w(p_{3})),\\
 & w(L_{123})=w(G)-(2w(p_{3})+2w(u)-w(v)),\,w(L_{124})=w(G)-(2w(p_{3})+w(u)),\\
 & w(L_{125})=w(G)-(w(p_{3})+2w(u)),\,w(L_{126})=w(G)-(w(p_{3})+w(u)+w(v)),\\
 & w(L_{135})=w(G)-3w(u),\,w(L_{136})=w(G)-(2w(u)+w(v)),\\
 & w(L_{245})=w(G)-(w(u)+2w(v)),\,w(L_{246})=w(G)-3w(v),\\
 & {\rm \mathbf{Weights\,for\,equations}}\\
 & d_{1}:=w(F_{1})=w(G)-(3w(v)-2w(p_{3})),\,d_{2}:=w(F_{2})=w(G)-(w(u)+2w(v)-2w(p_{3})),\\
 & d_{3}:=w(F_{3})=w(G)-(2w(v)-w(p_{3})),\,d_{4}:=w(F_{4})=w(G)-(w(u)+w(v)-w(p_{3})),\\
 & d_{5}:=w(F_{5})=w(G)-w(v),\,d_{6}:=w(F_{6})=w(G)-w(u),\\
 & d_{7}:=w(F_{7})=2(w(G)-w(u)-2w(v)+w(p_{3})),\\
 & d_{8}:=w(F_{8})=2w(G)-(2w(u)+3w(v)-w(p_{3})),\\
 & d_{9}:=w(F_{9})=2\big(w(G)-(w(u)+w(v))\big).
\end{align*}

\subsection{Graded $9\times16$ free resolution}

we set
\[
\delta:=4w(G)-(3w(u)+6w(v)-3w(p_{3})).
\]
Then we observe that the summation of all the weights of the equations
of $\Pi_{\mathbb{A}}^{15}$ is equal to

\[
\sum_{i=1}^{9}d_{i}=3\delta.
\]

Using \cite{key-18}, we obtain the following:
\begin{prop}
\label{prop:916}The following assertions hold:

\vspace{3pt}

\noindent$(1)$ The affine coordinate ring $R_{\Pi}$ of $\Pi_{\mathbb{A}}^{15}$
has the following so-called $9\times16$ $S_{\Pi}$-free resolution
which is graded with respect to the weights given as in the subsection
$\ref{subsec:Weights-for-variables}$:
\[
0\leftarrow R_{\Pi}\leftarrow P_{0}\leftarrow P_{1}\leftarrow P_{2}\leftarrow P_{3}\leftarrow P_{4}\leftarrow0,
\]
where {\small{}
\begin{align*}
 & P_{0}=S_{\Pi},P_{1}=\oplus_{i=1}^{9}S_{\Pi}(-d_{i}),\\
 & P_{2}=S_{\Pi}(-(d_{6}+w(p_{2})))\oplus S_{\Pi}(-(d_{6}+w(p_{1})))\oplus S_{\Pi}(-(d_{8}+w(p_{2})))\oplus S_{\Pi}(-(d_{8}+w(p_{4})))\\
 & \oplus S_{\Pi}(-(d_{8}+w(p_{1})))^{\oplus2}\oplus S_{\Pi}(-(d_{8}+w(p_{3})))^{\oplus2}\\
 & \oplus S_{\Pi}(-\delta+(d_{8}+w(p_{1})))^{\oplus2}\oplus S_{\Pi}(-\delta+(d_{8}+w(p_{3})))^{\oplus2}\\
 & \oplus S_{\Pi}(-\delta+(d_{6}+w(p_{2})))\oplus S_{\Pi}(-\delta+(d_{6}+w(p_{1})))\oplus S_{\Pi}(-\delta+(d_{8}+w(p_{2})))\oplus S_{\Pi}(-\delta+(d_{8}+w(p_{4}))),\\
 & P_{3}=\oplus_{i=1}^{9}S_{\Pi}(-\delta+d_{i}),P_{4}=S_{\Pi}(-\delta).
\end{align*}
}{\small\par}

Moreover, we have 
\[
K_{\Pi_{\mathbb{P}}^{14}}=\mathcal{O}(-k)\,\,\text{with\,\,}k:=7w(G)-5w(p_{3})-9w(u)-4w(v).
\]

\noindent$(2)$ The same statements hold as in $(1)$ for $\Pi_{\mathbb{P}}^{13}$
and for the ideal of $\Pi_{\mathbb{A}}^{14}$ in the polynomial ring
obtained from $S_{\Pi}$ by setting $L_{246}=1$. 
\end{prop}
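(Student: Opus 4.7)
The proof splits naturally into two components: (a) establishing that the resolution has the claimed shape $9\times 16$ with the prescribed shifts, and (b) deducing the canonical class. Throughout, I can lean on the two structural facts already proved: $R_\Pi$ is a domain (Proposition \ref{prop:Unproj}) and Gorenstein (Proposition \ref{prop:Gor}), and $\dim \Pi_{\mathbb{A}}^{15}=15$ so the codimension in the $19$-dimensional ambient $S_\Pi$ equals $4$. By Auslander--Buchsbaum together with the Cohen--Macaulay property, the minimal graded free resolution of $R_\Pi$ over $S_\Pi$ therefore has length exactly $4$, and by Gorenstein duality it is self-dual in the sense $\mathrm{Hom}_{S_\Pi}(P_\bullet, S_\Pi) \simeq P_{4-\bullet}(-\delta)$ for some twist $\delta$. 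In particular $P_4$ has rank one, $P_3 \simeq P_1^\vee(-\delta)$, and the Betti numbers automatically satisfy $\mathrm{rank}\,P_1 = \mathrm{rank}\,P_3$, $\mathrm{rank}\,P_0=\mathrm{rank}\,P_4=1$.

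For step (a), the $9$ generators $F_1,\dots,F_9$ of $I_\Pi$, together with the weight table of Section \ref{subsec:Weights-for-variables}, immediately pin down $P_1=\bigoplus_{i=1}^9 S_\Pi(-d_i)$; self-duality then forces $P_3=\bigoplus_{i=1}^9 S_\Pi(-\delta+d_i)$ once $\delta$ is identified. The substantive input is the explicit determination of $P_2$: one must exhibit $16$ minimal first syzygies of $F_1,\dots,F_9$, determine their weights under the grading of Section \ref{subsec:Weights-for-variables}, and confirm minimality. I would carry this out by a \texttt{Macaulay2} computation (as referenced via \cite{key-18}), specialising all free parameters $w(p_3), w(u), w(v), w(G)$ to generic integer values so that the program returns the minimal graded resolution. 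The rank pattern $1\text{-}9\text{-}16\text{-}9\text{-}1$ characterising the so-called $9\times 16$ Gorenstein format then serves as a sanity check, and the symmetric pairing of $P_2$ summands of shifts $a$ and $\delta-a$ can be verified directly against the list in the statement. A further consistency check is the identity $\sum_{i=1}^9 d_i = 3\delta$, which follows by pairing $P_1$ with $P_3$ via self-duality and taking the sum of the $9$ paired shifts, each equal to $\delta$, and comparing with $\sum d_i + \sum(\delta-d_i) = 9\delta$; this reduces to the numerical check already observed in Section \ref{subsec:Weights-for-variables}.

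For step (b), I would apply the standard formula for the canonical module of a Cohen--Macaulay graded quotient: $\omega_{R_\Pi} = \mathrm{Ext}^4_{S_\Pi}(R_\Pi, \omega_{S_\Pi})$ with $\omega_{S_\Pi}=S_\Pi(-\sum_\alpha w_\alpha)$, where $\alpha$ runs over the $19$ variables. Using $P_4 = S_\Pi(-\delta)$ we obtain $\omega_{R_\Pi}\simeq R_\Pi(\delta - \sum_\alpha w_\alpha)$, hence $K_{\Pi_{\mathbb P}^{14}}=\mathcal O(-(\sum_\alpha w_\alpha - \delta))$. The exponent is then a purely mechanical sum over the weight table of Section \ref{subsec:Weights-for-variables}: summing the weights of the $p_i,u,v,t_j$, the eight $L_{ijk}$, and the three $s_i$, and subtracting $\delta=4w(G)-3w(u)-6w(v)+3w(p_3)$, produces $k=7w(G)-5w(p_3)-9w(u)-4w(v)$ as claimed.

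For part (2), the resolution of $\Pi_{\mathbb{P}}^{13}$ follows at once from that of $\Pi_{\mathbb{A}}^{14}$ by taking $\mathrm{Proj}$, so the task is to transfer the resolution of $R_\Pi$ to the coordinate ring of $\Pi_{\mathbb{A}}^{14}$. Here I would invoke Proposition \ref{prop:Pi14}, which gives $\Pi_{\mathbb{A}}^{15}\cap\{L_{246}\ne 0\}\simeq \Pi_{\mathbb{A}}^{14}\times(\mathbb{A}^1)^*$: specialising $L_{246}=1$ in the resolution of $R_\Pi$ produces a graded free resolution of the coordinate ring of $\Pi_{\mathbb{A}}^{14}$ with identical shape and identical shifts (the variable $L_{246}$ being non-zero means no minimal generator becomes redundant). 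The hard part, as indicated, will be the \texttt{Macaulay2} calculation identifying the $16$ syzygies and verifying minimality, together with the bookkeeping needed to restore the symbolic weights $w(G),w(u),w(v),w(p_3)$ from the numerical specialisation used by the computer; everything else is formal consequence of Gorenstein duality and the weight table.
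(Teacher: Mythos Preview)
Your proposal is correct and matches the paper's approach: the paper's entire proof reads ``Using \cite{key-18}, we obtain the following'', i.e.\ a direct computer-algebra verification (with \textsc{Singular} rather than \textsc{Macaulay2}), and you have simply supplied the surrounding theoretical scaffolding (Auslander--Buchsbaum, Gorenstein self-duality, the $\omega_{R_\Pi}\simeq R_\Pi(\delta-\sum_\alpha w_\alpha)$ computation) that the paper leaves implicit. One small remark: your ``consistency check'' deriving $\sum d_i=3\delta$ from self-duality is not quite right---pairing $P_1$ with $P_3$ only gives the tautology $\sum d_i+\sum(\delta-d_i)=9\delta$, and the identity $\sum d_i=3\delta$ is instead a direct numerical computation from the weight table, as the paper also treats it---but this is peripheral to the actual argument.
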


By Proposition \ref{prop:916} and the proof of \cite[Cor.~4.2]{key-5},
we obtain the following:
\begin{cor}
\label{cor:QFanoData}For any class of a prime $\mathbb{Q}$-Fano
threefold as in Theorem $\ref{thm:main}$, we choose the weights of
coordinates as in there and a weighted complete intersection $X$
of $\Pi_{\mathbb{P}}^{13}$ or $\Pi_{\mathbb{P}}^{14}$ as in Table
$\ref{Table1}$. Then $K_{X}=\mathcal{O}_{X}(-1)$, and the genus
of $X$ and the Hilbert numerator of $X$ with respect to $-K_{X}$
coincide with those given in \cite{key-6}. 
\end{cor}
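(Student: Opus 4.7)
The plan is to derive both assertions from Proposition \ref{prop:916} by essentially the same numerical bookkeeping as in \cite[proof of Cor.~4.2]{key-5}, case by case for the eight classes.

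First, for the canonical bundle, I would use adjunction together with the formula $K_{\Pi_{\mathbb{P}}^{14}}=\mathcal{O}(-k)$ with $k=7w(G)-5w(p_{3})-9w(u)-4w(v)$ supplied by Proposition \ref{prop:916}(1), and the analogous formula for $\Pi_{\mathbb{P}}^{13}$ from Proposition \ref{prop:916}(2). If $X$ is cut out of $\Pi_{\mathbb{P}}^{14}$ (resp.\ $\Pi_{\mathbb{P}}^{13}$) as in Table \ref{Table1} by hypersurfaces of degrees $e_1,\dots,e_m$, then the quasi-smoothness of $X$ and $\mathbb{Q}$-factoriality from Proposition \ref{prop:Pic1} give
\[
K_{X}=\mathcal{O}_{X}\Bigl(-k+\textstyle\sum_{j}e_{j}\Bigr).
\]
For each of the eight cases I would substitute the weights declared in Theorem \ref{thm:main} into the formulas for $k$ and $d_{i}$, then read off the cutting degrees from the last column of Table \ref{Table1} and verify the arithmetic identity $\sum_j e_j - k = -1$. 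This is a direct computation that must be done per class (the weights differ for the two blocks No.\,$308$--$872$ and No.\,$577$--$1766$), but contains no conceptual difficulty.

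Second, for the genus and Hilbert numerator, I would assemble the Hilbert series of $R_{\Pi}$ from the graded $9\times16$ resolution of Proposition \ref{prop:916}: the Hilbert numerator $N_{\Pi}(t)$ equals the alternating sum of the graded Betti contributions $\sum_{i}(-1)^{i}\sum_{j}t^{a_{ij}}$ obtained from the shifts in $P_{0},P_{1},P_{2},P_{3},P_{4}$, and the Hilbert series is $N_{\Pi}(t)/\prod_{v}(1-t^{w(v)})$ where $v$ runs over the $19$ (or $18$, in the $\Pi_{\mathbb{P}}^{13}$-case) ambient coordinates with the weights prescribed in Theorem \ref{thm:main}. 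Because $X$ is a quasi-smooth weighted complete intersection, the defining equations form a regular sequence and the Hilbert series of $R_{X}$ is obtained by multiplying the numerator by $\prod_{j}(1-t^{e_{j}})$. I would then compare the resulting rational function with the Hilbert series $P_{X}(t)/\prod_{i}(1-t^{a_{i}})$ recorded in \cite{key-6}, where $a_{i}$ are the weights of the weighted projective space $\mathbb{P}_{X}$ listed in the second column of Table \ref{Table1}. All factors $(1-t^{w(v)})$ not matched by a cutting degree or a surviving $a_{i}$ must cancel against factors in $N_{\Pi}(t)\prod_{j}(1-t^{e_{j}})$, leaving $P_{X}(t)$. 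The genus $g(X)=h^{0}(-K_{X})-2$ is then the coefficient of $t$ in the expansion of $P_{X}(t)/\prod_{i}(1-t^{a_{i}})$ minus $2$, which will match the database value once the Hilbert numerators agree.

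The main obstacle is the sheer amount of weight bookkeeping across eight cases: one must track which of the $19$ denominator factors cancel against numerator factors coming from the resolution or from the cutting hypersurfaces, and verify term by term that the reduced numerator agrees with the published Hilbert numerator in \cite{key-6}. Once this is carried out using a computer algebra check (as in the analogous verification in \cite[Cor.~4.2]{key-5}), both assertions follow simultaneously, and the index identity $\sum_{j}e_{j}=k-1$ can be read off as a by-product of the degree shift in the highest free module $P_{4}=S_{\Pi}(-\delta)$ together with $\delta$ and the sum of ambient weights.
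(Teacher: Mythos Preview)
Your approach is essentially the same as the paper's: the paper simply cites Proposition~\ref{prop:916} together with the proof of \cite[Cor.~4.2]{key-5}, and you have correctly unpacked what that entails---adjunction from the canonical formula for the key variety plus Hilbert-series bookkeeping from the graded resolution.

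One small point: you invoke quasi-smoothness of $X$ and $\mathbb{Q}$-factoriality from Proposition~\ref{prop:Pic1} to justify adjunction and the regular-sequence property. Neither is needed here, and both are only established later in Section~\ref{sec:Proof-of-Theorem}. The adjunction formula $K_{X}=\mathcal{O}_{X}(-k+\sum_{j}e_{j})$ holds because $\Pi_{\mathbb{P}}^{13}$ (resp.\ $\Pi_{\mathbb{P}}^{14}$) is Gorenstein by Proposition~\ref{prop:Gor} and the cutting hypersurfaces form a regular sequence simply by the assumption that $X$ is a weighted complete intersection of the stated codimension; no smoothness is required. Removing those references avoids a forward dependence on results proved after this corollary.
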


Moreover, by the proof of \cite[Cor.~4.3]{key-5}, we have
\begin{cor}
\label{cor:acX4}Let $X$ be as in Corollary $\ref{cor:QFanoData}$.
Assume that $X$ is quasi-smooth, has Picard number one, and has cyclic
quotient singularities as assigned for the class of $X$. Then ${\rm ac}_{X}=4$. 
\end{cor}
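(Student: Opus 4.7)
\medskip

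My plan is to follow \cite[Cor.~4.3]{key-5} almost verbatim. The statement ${\rm ac}_X = 4$ means that the weighted projective space of minimal dimension into which $X$ embeds has dimension $7$, equivalently, that the anti-canonical ring $R(X,-K_X) := \bigoplus_{m \geq 0} H^0(X,-mK_X)$ has exactly eight minimal homogeneous generators, in the degrees given by the weights of $\mathbb{P}_X$ in Table \ref{Table1}.

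First, by Corollary \ref{cor:QFanoData}, $-K_X = \mathcal{O}_X(1)$, and the Hilbert numerator of $R(X,-K_X)$ agrees with the entry recorded in \cite{key-6} for the relevant class. Combined with the assumed basket of cyclic quotient singularities and the Picard-number-one hypothesis, the Reid--Kawamata plurigenus formula (singular Riemann--Roch on a $\mathbb{Q}$-Fano threefold) pins down the full Hilbert series of $R(X,-K_X)$, so it coincides with the series listed for the corresponding class in \cite{key-6}. For each of the eight classes, this series is displayed there in the form $N(t)/\prod_{i=0}^{7}(1-t^{a_i})$ with $(a_0,\dots,a_7)$ equal to the weights of $\mathbb{P}_X \simeq \mathbb{P}^{7}$.

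The remaining point is that this presentation is \emph{minimal}, i.e.\ that $(a_0,\dots,a_7)$ are precisely the degrees of a minimal homogeneous generating set of $R(X,-K_X)$, so that no smaller weighted projective space can contain $X$. This is read off from the low-order expansion of $N(t)$ by the standard recipe of counting, in each degree, how many new generators are forced after accounting for products of earlier generators; the verification is a finite inspection of the eight database entries, and is where the only actual (but entirely routine) work sits. Once this is checked, $X$ embeds in $\mathbb{P}_X$ as a codimension-$4$ subvariety, and no smaller weighted projective space suffices, giving ${\rm ac}_X = \dim\mathbb{P}_X - \dim X = 7 - 3 = 4$.
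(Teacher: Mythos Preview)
Your proposal is correct and takes essentially the same approach as the paper: both simply invoke the argument of \cite[Cor.~4.3]{key-5}, and your sketch of that argument (Hilbert series pinned down by Riemann--Roch plus the assumed basket, then a case-by-case check that the presentation with denominator $\prod(1-t^{a_i})$ is minimal) is the intended one. One small clarification worth making explicit: the \emph{upper} bound ${\rm ac}_X\le 4$ does not come from the Hilbert series alone but from the actual construction---$X$ sits in $\mathbb{P}_X$ by Table~\ref{Table1}, and the Gorenstein free resolution of Proposition~\ref{prop:916} guarantees that the homogeneous coordinate ring there coincides with $R(X,-K_X)$; the Hilbert-series inspection then supplies the matching \emph{lower} bound by showing no coordinate can be eliminated (all relation degrees exceed all generator weights in each of the eight cases).
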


\section{Proof of Theorem \ref{thm:main}\label{sec:Proof-of-Theorem}}

In this section, we show Theorem \ref{thm:main}. We follow \cite[the subsec.~5.1]{key-5}
for the strategy how to verify a weighted complete intersection of
$\Pi_{\mathbb{P}}^{13}$ or $\Pi_{\mathbb{P}}^{14}$ in each case
is a desired example of a $\mathbb{Q}$-Fano threefold or a $K3$
suirface. 

\subsection{General strategy\label{subsec:General-strategy}}

\noindent\textbf{List of notation:}

We use the following notation for a weighted projective variety $V$
considered below (for example, $V=X,T$).
\begin{itemize}
\item $\mathbb{P}_{V}:=$the ambient weighted projective space for $V$.
\item $V_{\mathbb{A}}:=$the affine cone of $V$.
\item $V_{\mathbb{A}}^{o}:=$the complement of the origin in $V_{\mathbb{A}}$.
\item $\mathbb{A}_{V}:=$the ambient affine space for $V_{\mathbb{A}}$.
\item $\mathbb{A}_{V}^{*}:=$the affine space obtained from $\mathbb{A}_{V}$
by setting the coordinate $*=1$.
\item $V_{\mathbb{A}}^{*}:=$the restriction of $V_{\mathbb{A}}$ to $\mathbb{A}_{V}^{*}$.
\item $|\mathcal{O}_{\mathbb{P}_{V}}(i)|_{\mathbb{A}}:=$ the set of the
affine cones of members of $|\mathcal{\mathcal{O}}_{\mathbb{P}_{V}}(i)|$.
For this notation, we omit $\mathbb{P}_{V}$ if no confusion likely
occurs. 
\item ${\rm Bs}\,|\mathcal{O}_{\mathbb{P}_{V}}(i)|_{\mathbb{A}}:=$ the
intersection of all the members of $|\mathcal{\mathcal{O}}_{\mathbb{P}_{V}}(i)|_{\mathbb{A}}$.
\end{itemize}
\noindent\textbf{Check points (A), (B), (C):}

Recall the we set $b:=p_{3}^{2}+t_{1}p_{4}^{2}+p_{4}v$.
\begin{description}
\item [{(A)}] $X$ and $T$ are quasi-smooth, namely, $X_{\mathbb{A}}^{o}$
and $T_{\mathbb{A}}^{o}$ are smooth.
\item [{(B)}] $X$ and $T$ have only cyclic quotient singularities assigned
for No.$*$.
\item [{(C)}] $X\cap\{b=0\}$ is a prime divisor.
\end{description}
By these claims, we can finish the proof of Theorem \ref{thm:main}.
Indeed, by (A), (C) and Proposition \ref{prop:Pic1}, $X$ has Picard
number one. By this fact, (A), (B), and Corollaries \ref{cor:QFanoData}
and \ref{cor:acX4}, $X$ is an example of a $\mathbb{Q}$-Fano threefold
of No.$*$. Thus Theorem \ref{thm:main} (1-1) is proved. By (A) and
(B), Theorem \ref{thm:main} (2) is proved. 

For Claims (A) and (B), we have the following claim by \cite[Claim 5.1]{key-5}:
\begin{claim}
As for Claims (A) and (B), it suffices to show them for $T$.
\end{claim}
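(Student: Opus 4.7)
The plan is to invoke the argument of \cite[Claim 5.1]{key-5}, which applies essentially verbatim to our situation; I sketch its structure. First observe that in each of the eight numerical data of Theorem \ref{thm:main}(1), the ambient weighted projective space for $X$ carries at least one coordinate of weight one: $L_{245}$ in No.\,577,~878; both $L_{245}$ and $p_{2}$ in No.\,1766; $p_{2}$ in No.\,872 (since $w(p_{2}) = d-3 = 1$ for $d=4$); and the added weight-one coordinate of $\Pi_{\mathbb{P}}^{14}$ over $\Pi_{\mathbb{P}}^{13}$ for No.\,308,~501,~512,~550. Since $K_{X} = \mathcal{O}_{X}(-1)$ by Corollary \ref{cor:QFanoData}, a general $T \in |-K_{X}|$ is cut out on $X$ by a general linear form $\ell$ in the weight-one coordinates; when only one such coordinate is available, $T$ is uniquely determined up to scalar.

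For Claim (A), at any $p \in T_{\mathbb{A}}^{o}$ the Jacobian matrix of the defining equations of $T_{\mathbb{A}}$ has the expected full rank by quasi-smoothness of $T$; deleting the row $d\ell$ leaves the Jacobian matrix of $X_{\mathbb{A}}$ of full rank at $p$, so $X_{\mathbb{A}}^{o}$ is smooth at $p$. Smoothness is then extended to the rest of $X_{\mathbb{A}}^{o}$: when several weight-one forms are available (No.\,308,~501,~512,~550,~872,~1766) one varies $\ell$ so that $\ell$ vanishes at a prescribed point of $X_{\mathbb{A}}^{o}$ and applies the same Jacobian argument, using that the quasi-smooth locus in the linear system is Zariski-open and meets every hyperplane of forms through a fixed point; when there is a unique weight-one coordinate (No.\,577,~878) one works in the affine chart $\{L_{245} = 1\}$ and combines a direct Jacobian check with the $\mathbb{C}^{*}$-equivariance of the singular locus of $X_{\mathbb{A}}^{o}$ to reduce back to points of $T_{\mathbb{A}}^{o}$.

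For Claim (B), once (A) is in hand the singularities of $X$ are cyclic quotient singularities at coordinate points of $\mathbb{P}_{X}$. At each such singular point $p$, a weight-one coordinate not vanishing at $p$ provides a smooth local slice of $X$ transverse to $T$, so that a $1/\alpha(\beta, -\beta)$-singularity of $T$ at $p$ lifts to a $1/\alpha(\beta, -\beta, 1)$-singularity of $X$ at $p$ (the extra weight being that of the transverse coordinate modulo $\alpha$); comparison with Theorem \ref{thm:main}(1) and Table \ref{Table1} finishes the matching of baskets. The main anticipated obstacle is the smoothness-extension step in (A) in the unique-weight-one-coordinate cases, which is precisely what \cite[Claim 5.1]{key-5} resolves; the remaining ingredients (Jacobian comparison and the local model for cyclic quotient singularities) are routine.
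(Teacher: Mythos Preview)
The paper's proof is simply the citation to \cite[Claim~5.1]{key-5}, and you invoke the same reference, so at that level you match the paper exactly. Your added sketch, however, contains an internal inconsistency: in the first paragraph you correctly record that No.\,308, 501, 512, 550 have only the added cone coordinate of weight one and that No.\,872 has only $p_{2}$, yet in the second paragraph you list all of these among the cases with ``several weight-one forms.'' In fact only No.\,1766 has two weight-one coordinates ($L_{245}$ and $p_{2}$); the other seven cases each have exactly one. Your ``vary $\ell$'' mechanism therefore applies to No.\,1766 alone, and the remaining cases must all be handled by your second method. That method (``$\mathbb{C}^{*}$-equivariance \ldots\ to reduce back to points of $T_{\mathbb{A}}^{o}$'') is not obviously sound as stated, since in a positively graded cone the closure of a $\mathbb{C}^{*}$-orbit limits to the origin rather than to a nonzero point of $T_{\mathbb{A}}^{o}$, so the reduction step needs a genuine argument.

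Since the paper itself provides no details beyond the citation, these defects in your sketch do not constitute a divergence from the paper's proof; but if you wish to retain the sketch you should either consult \cite{key-5} for the actual mechanism on the locus $\{\ell\neq 0\}$, or simply cite the reference without elaboration, as the paper does.
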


\vspace{3pt}

For Claim (C), we have only to show that $\dim\,\big(T\cap{\rm Sing}\,(X\cap\{b=0\})\big)\leq0$
by \cite[the last part of the subsec.~5.1]{key-5}. Since $T$ is
a Cartier divisor on $X$ outside a finite set of points, and $T\cap{\rm Sing}\,(X\cap\{b=0\})\subset{\rm Sing}\,(T\cap\{b=0\})$
where $T$ is a Cartier divisor, we have the following claim:
\begin{claim}
\label{claim:Cl(C)}As for Claim (C), it suffices to show that $\dim{\rm Sing}\,(T\cap\{b=0\})\leq0$.
\end{claim}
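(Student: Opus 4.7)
The plan is to verify the claim by combining the reduction already cited from \cite{key-5} with a Zariski tangent space comparison that exploits the fact that $T$ is Cartier on $X$ away from a finite set. First I would recall that, by \cite[the last part of the subsec.~5.1]{key-5}, Claim (C) — that $X\cap\{b=0\}$ is a prime divisor — follows once we show $\dim\bigl(T\cap{\rm Sing}(X\cap\{b=0\})\bigr)\leq 0$. The task is therefore only to deduce this bound from the simpler one $\dim{\rm Sing}(T\cap\{b=0\})\leq 0$.

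The key observation is that $T$, being a general member of $|-K_X|$, fails to be a Cartier divisor on $X$ only at points of ${\rm Sing}\,X$, which is the finite set of cyclic quotient singularities described in Table \ref{Table1}. Away from this finite set, $T$ is locally cut out on $X$ by a single regular equation $f=0$. The heart of the claim is then the set-theoretic inclusion
\[
\bigl(T\cap{\rm Sing}(X\cap\{b=0\})\bigr)\setminus{\rm Sing}\,X \;\subset\; {\rm Sing}(T\cap\{b=0\}),
\]
which I would verify by a local tangent space count. Setting $Y:=X\cap\{b=0\}$, if $p$ lies in the left hand side then $\dim T_pY>\dim Y=2$ since $p\in{\rm Sing}\,Y$, while $T\cap Y$ is cut out in $Y$ near $p$ by the single equation $f=0$; hence $\dim T_p(T\cap Y)\geq \dim T_pY-1>\dim Y-1=\dim(T\cap Y)$, so $p$ is singular on $T\cap\{b=0\}$.

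With the inclusion in hand, the conclusion is immediate: assuming $\dim{\rm Sing}(T\cap\{b=0\})\leq 0$, the left hand side of the inclusion has dimension at most zero, and adjoining the finite set ${\rm Sing}\,X\cap T$ keeps the total dimension zero. No substantial obstacle is expected in this reduction step; the real work for Theorem \ref{thm:main} lies in verifying Claims (A), (B) for $T$ and the bound $\dim{\rm Sing}(T\cap\{b=0\})\leq 0$, not in the formal Cartier-divisor argument above.
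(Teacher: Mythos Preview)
Your proposal is correct and follows essentially the same approach as the paper: reduce to the bound $\dim\bigl(T\cap{\rm Sing}(X\cap\{b=0\})\bigr)\leq 0$ via \cite{key-5}, observe that $T$ is Cartier on $X$ outside a finite set, and use the inclusion $T\cap{\rm Sing}(X\cap\{b=0\})\subset{\rm Sing}(T\cap\{b=0\})$ on that Cartier locus. The only difference is that you supply the tangent-space justification for this inclusion explicitly, whereas the paper simply asserts it.
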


\subsection{Proof of Claims (A), (B), (C)\label{subsec:Proof-of-ClaimsABC}}

Note that, in the case that the key variety of $T$ is the weighted
cone $\Pi_{\mathbb{P}}^{14}$ over $\Pi_{\mathbb{P}}^{13}$ (No.~308,
501, 512, 550), the weight one coordinate is unique. Therefore $T$
is actually a weighted complete intersection of $\Pi_{\mathbb{P}}^{13}$.

\vspace{3pt}

\noindent\textbf{\fbox{No.308}} In this case, it holds that $T=\Pi_{\mathbb{P}}^{13}\cap(2)^{2}\cap(3)^{2}\cap(4)^{3}\cap(5)\cap(6)^{2}\cap(8)$
and then we can write $T_{\mathbb{A}}=\Pi_{\mathbb{A}}^{14}\cap\{t_{1}=L_{245}=t_{2}=L_{126}=p_{2}=L_{124}=L_{136}=0,\,L_{125}=a_{125}p_{1},\,L_{123}=a_{123}p_{4}+b_{123}u,\,L_{135}=a_{135}p_{4}+b_{135}u,\,s_{1}=a_{1}v\}\subset\mathbb{A}(p_{1},p_{4},u,p_{3},v,s_{2},s_{3})$,
where the parameters $a_{125},\dots,a_{1}\in\mathbb{C}$ are chosen
generally.

\vspace{3pt}

\noindent\textbf{Claim (A).} We show that $T_{\mathbb{A}}^{o}$
is smooth.

We show that $T_{\mathbb{A}}^{o}$ is smooth on the $v$-chart. We
set $T_{\mathbb{A}}(6):=\Pi_{\mathbb{A}}^{14}\cap\{t_{1}=L_{245}=t_{2}=L_{126}=p_{2}=L_{124}=L_{136}=0,\,L_{125}=a_{125}p_{1},\,L_{123}=a_{123}p_{4}+b_{123}u,\,L_{135}=a_{135}p_{4}+b_{135}u\}$,
which is obtained from $T_{\mathbb{A}}$ by removing the weight 8
equation $s_{1}=a_{1}v$. Since $w(v)=8$, the $v$-chart of $T_{\mathbb{A}}(6)$
is disjoint from ${\rm Bs}\,|\mathcal{O}_{\mathbb{P}_{T_{\mathbb{A}}(6)}}(8)|_{\mathbb{A}}$.
Therefore it suffices to check $T_{\mathbb{A}}^{o}(6)$ is smooth
on the $v$-chart by the Bertini theorem \cite{key-17}. We denote
by the same symbols the polynomials $F_{2},F_{4},F_{6}$ restricted
to $\mathbb{A}(p_{1},p_{4},u,p_{3},v,s_{2},s_{3})$ (we also use this
convention below). We see that, on the $v$-chart of $T_{\mathbb{A}}^{o}(6)$,
we may solve regularly the equation $F_{2}=F_{4}=F_{6}=0$ with respect
to $s_{1},s_{2},s_{3}$, namely, $s_{1},s_{2},s_{3}$ can be expressed
as polynomials of other variables. Replacing $s_{1},s_{2},s_{3}$
with these expressions, the equations of $T_{\mathbb{A}}^{o}(6)$
on the $v$-chart are reduced to one equation $-1+a_{123}p_{1}^{2}p_{4}-a_{125}p_{1}^{2}u+b_{123}p_{1}^{2}u-a_{123}p_{1}p_{3}p_{4}u+a_{125}p_{1}p_{3}u^{2}-b_{123}p_{1}p_{3}u^{2}-a_{123}p_{4}^{2}u^{2}-a_{135}p_{4}u^{3}-b_{123}p_{4}u^{3}-b_{135}u^{4}=0$.
By the Jacobian criterion, we see that this is smooth. Therefore,
$T_{\mathbb{A}}^{o}(6)$ is smooth on the $v$-chart and so is $T_{\mathbb{A}}^{o}$
as we noted above. 

We show that $T_{\mathbb{A}}^{o}$ is smooth on the $p_{1}$-chart.
Since we have checked that $T_{\mathbb{A}}^{o}$ is smooth on the
$v$-chart, we have only to consider this problem on the locus $\{v=0\}$.
We see that, on the $p_{1}$-chart of $T_{\mathbb{A}}^{o}$ near the
locus $\{v=0\}$, we may solve regularly $F_{1}=0,F_{3}=0,F_{5}=0$
with respect to $s_{2},s_{3},p_{4}$. Replacing $s_{2},s_{3},p_{4}$
with these expressions, the equations of $T_{\mathbb{A}}^{o}$ on
the $p_{1}$-chart near the locus $\{v=0\}$ are reduced to one equation,
of which we omit the explicit form since it is lengthy. By the Jacobian
criterion, we see that this is smooth. Therefore, $T_{\mathbb{A}}^{o}$
is smooth on the $p_{1}$-chart. 

It remains to check that $T_{\mathbb{A}}^{o}$ is smooth along the
locus $\{v=p_{1}=0\}$. It is easy to derive that $T\cap\{v=p_{1}=0\}$
is the union of the loci $S_{1}=\{a_{123}p_{4}^{2}+a_{135}p_{4}u+b_{123}p_{4}u+b_{135}u^{2}=0\}\subset\mathbb{A}(p_{4},u)$
and $S_{2}=\{a_{123}p_{4}^{3}-s_{2}^{2}=0\}\subset\mathbb{A}(p_{4},s_{2})$.
We see that $T_{\mathbb{A}}^{o}$ is smooth along $S_{1}$ and $S_{2}$
computing the Zariski tangent spaces of $T_{\mathbb{A}}^{o}$ at points
there. 

Thus we have shown that $T_{\mathbb{A}}^{o}$ is smooth.

\vspace{3pt}

\noindent\textbf{Claim (B).} We check the singularities of $T$
on each chart. The way to investigate is similar in all the charts,
we write the investigation in detail only for the $s_{3}$-chart of
$T$. 

For the $s_{3}$-chart of $T$, we consider $T_{\mathbb{A}}^{s_{3}}\subset\mathbb{A}_{T}^{s_{3}}$.
Since $w(s_{3})=10$, the cyclic group $\mathbb{Z}/10\mathbb{Z}$
acts on $\mathbb{A}_{T}^{s_{3}}$. It is easy to see that the nonfree
locus of the $\mathbb{Z}/10\mathbb{Z}$-action is contained in $\{p_{3}=s_{2}=0\}\subset\mathbb{A}_{T}^{s_{3}}$
and it holds that $T_{\mathbb{A}}^{s_{3}}\cap\{p_{3}=s_{2}=0\}=\{p_{3}=s_{2}=p_{1}=0,\,u=v^{2},\,p_{4}=a_{1}v^{2},\,1+(a_{1}^{2}a_{123}+a_{1}a_{135}+a_{1}b_{123}+b_{135})v^{5}=0\}$,
which consists of five points. By the $\mathbb{Z}/10\mathbb{Z}$-action,
these five points are mapped to one point on $T$, which we denote
by $\mathsf{p_{2}}$. The stabilizer groups at these five points are
isomorphic to $\mathbb{Z}/2\mathbb{Z}$. Since these five points are
isolated as the nonfree locus of the $\mathbb{Z}/2\mathbb{Z}$-action,
$T$ has a $1/2(1,1)$-singularity at $\mathsf{p}_{2}$ (cf. \cite[the subsec.5.1,  the last part of the item Claims (A) and (B) for T]{key-5}.

For the $p_{1}$-chart of $T$, we consider $T_{\mathbb{A}}^{p_{1}}\subset\mathbb{A}_{T}^{p_{1}}$.
Since $w(p_{1})=5$, the cyclic group $\mathbb{Z}/5\mathbb{Z}$ acts
on $\mathbb{A}_{T}^{p_{1}}$. It is easy to see that the nonfree locus
of the $\mathbb{Z}/5\mathbb{Z}$-action consist of the $p_{1}$-point.
Using the equations of $T,$ we can show that the tangent space of
$T_{\mathbb{A}}^{p_{1}}$ in $\mathbb{A}_{T}^{p_{1}}$ at the $p_{1}$-point
is $\{s_{2}=s_{3}=p_{4}=u=0\}.$ Therefore we may take $p_{3},v$
as a local coordinate of $T_{\mathbb{A}}^{p_{1}}$ at the $p_{1}$-point.
Since $w(p_{3})=7$ and $w(v)=8$, $T$ has only a $1/5(2,3)$-singularity
at the $p_{1}$-point. 

In a similar way, we see that, on the $s_{2}$-chart, $T$ have only
a $1/3(2,1)$-singularity at the point which is the image of the locus
$S_{2}$ defined in the proof of Claim (A) above. On the $v$-chart,
we see that the singularities of $T$ appear on the $s_{3}$-chart,
thus we have already checked them. We see that $T$ is nonsingular
on the $p_{3}$-chart. On the union of the $p_{4}$- and $u$-charts,
we see that $T$ has only two $1/6(1,5)$-singularities at the point
which is the image of the locus $S_{1}$ defined in the proof of Claim
(A) above. 

\vspace{3pt}

\noindent\textbf{Claim (C).} Using the description of the $v$-chart
of $T_{\mathbb{A}}(6)$ as in the proof of Claim (A) above, we can
easily check that $\dim{\rm Sing}\,(T\cap\{b=0\})\leq0$ on the $v$-chart
of $T$. Moreover, we can check that $(T\cap\{b=0\})\cap\{v=0\}$
consists of a finite number of points. Therefore, by Claim \ref{claim:Cl(C)},
Claim (C) follows for No.308.

\vspace{3pt}

In all the cases, the proofs of Claim (A)--(C) are similar, so we
only give outlines of the proofs in the cases below. 

\vspace{3pt}

\noindent\textbf{\fbox{No.501}} In this case, it holds that $T=\Pi_{\mathbb{P}}^{13}\cap(2)^{2}\cap(3)\cap(4)^{3}\cap(5)^{2}\cap(6)^{3}$
and then we can write $T_{\mathbb{A}}=\Pi_{\mathbb{A}}^{14}\cap\{t_{1}=0,\,L_{245}=0,\,t_{2}=aL_{126},\,p_{2}=0,L_{124}=0,L_{136}=0,\,p_{1}=0,\,L_{125}=0,p_{4}=b_{4}u+c_{4}L_{126}^{2},\,L_{123}=b_{123}u+c_{123}L_{126}^{2},\,L_{135}=b_{135}u+c_{135}L_{126}^{2}\}\subset\mathbb{A}(L_{126},u,p_{3},v,s_{1},s_{2},s_{3})$,
where the parameters $a,\dots,c_{135}\in\mathbb{C}$ are chosen generally.

\vspace{3pt}

\noindent\textbf{Claim (A).} We show that $T_{\mathbb{A}}^{o}$
is smooth.

We show that $T_{\mathbb{A}}^{o}$ is smooth on the $u$-chart. We
set $T_{\mathbb{A}}(5):=\Pi_{\mathbb{A}}^{15}\cap\{t_{1}=0,\,L_{245}=0,\,t_{2}=aL_{126},\,p_{2}=0,L_{124}=0,L_{136}=0,\,p_{1}=0,\,L_{125}=0\}$,
which is obtained from $T_{\mathbb{A}}$ by removing the weight 6
equations $p_{4}=b_{4}u+c_{4}L_{126}^{2},\,L_{123}=b_{123}u+c_{123}L_{126}^{2},\,L_{135}=b_{135}u+c_{135}L_{126}^{2}$
. Since $w(u)=6$, it suffices to check $T_{\mathbb{A}}^{o}(5)$ is
smooth on the $u$-chart. Similarly to the case of the $v$-chart
of $T_{\mathbb{A}}(6)$ of No.308, we can show that the $u$-chart
of $T_{\mathbb{A}}(5)$ is isomorphic to a hypersurface and this is
smooth.

We show that $T_{\mathbb{A}}^{o}$ is smooth on the $L_{126}$-chart.
Since we have already check the smoothness of $T_{\mathbb{A}}^{o}$
on the $u$-chart, we may assume that $u=0$. The condition that $L_{126}\not=0$
and $u=0$ on $T_{\mathbb{A}}$ induces the condition that $p_{4}\not=0$
on $T_{\mathbb{A}}(5)$. Since $w(p_{4})=6$, it suffices to check
$T_{\mathbb{A}}^{o}(5)$ is smooth on the $p_{4}$-chart. This can
be done also similarly to the case of the $v$-chart of $T_{\mathbb{A}}(6)$
of No.308.

Now we check that $T_{\mathbb{A}}^{o}$ is smooth along the locus
$\{u=L_{126}=0\}$. It is easy to derive that $T\cap\{u=L_{126}=0\}=\mathbb{A}(s_{1})$.
We can show that the Zariski tangent space in $\mathbb{A}_{T}$ at
a point of $\mathbb{A}(s_{1})$ is defined by $\{u=v=s_{3}=L_{126}=0\}$.

Thus $T_{\mathbb{A}}^{o}$ is smooth.

\vspace{3pt}

\noindent\textbf{Claim (B).} We check the singularities of $T$
on each chart.

On the $s_{3}$-chart, we can show similarly to the case of the $s_{3}$-chart
for No.308 that $T$ has a $1/2(1,1)$-singularity at $\{s_{3}=1,p_{3}=s_{2}=L_{126}=0,u=v^{2},s_{1}=b_{4}v,1+(b_{135}+b_{123}b_{4})v^{5}=0\}$
which consists of one point. For the $L_{126}$-chart, we consider
$T_{\mathbb{A}}^{L_{126}}\subset\mathbb{A}_{T}^{L_{126}}$. Since
$w(L_{126})=3$, the cyclic group $\mathbb{Z}/3\mathbb{Z}$ acts on
$\mathbb{A}_{T}^{L_{126}}$. It is easy to see that the nonfree locus
of the $\mathbb{Z}/3\mathbb{Z}$-action is the locus $\{u\not=0,s_{2}=(ac_{4}^{2}+2ab_{4}c_{4}u+ab_{4}^{2}u^{2})/u$,
$a^{2}c_{4}^{3}+(ac_{4}^{2}+3a^{2}b_{4}c_{4}^{2})u+(2ab_{4}c_{4}+3a^{2}b_{4}^{2}c_{4}-c_{123}c_{4})u^{2}+(ab_{4}^{2}+a^{2}b_{4}^{3}-b_{4}c_{123}-c_{135}-b_{123}c_{4})u^{3}-(b_{135}+b_{123}b_{4})u^{4}=0\}\subset\mathbb{A}(u,s_{2}),$
whose image on $T$ consists of $4$ points. By generality of $T$,
we may assume that $s_{2}\not=0$ for these four points. We show that
$T$ has $1/3(1,2)$-singularities at these 4 points. To see this,
we switch to the $L_{126}$-chart of $T_{\mathbb{A}}(5)$, which is
introduced in the proof of Claim (A) above. Let us denote by the $T(5)$
the weighted projectivization of $T_{\mathbb{A}}(5)$. We may assume
that $p_{4}s_{2}\not=0$ since $T$ is obtained from $T(5)$ by cutting
generally with three weight six hypersurfaces. It is easy to see that
the nonfree locus of the $\mathbb{Z}/3\mathbb{Z}$-action on $\{p_{4}s_{2}\not=0\}\subset\mathbb{A}_{T(5)}^{L_{126}}$
is $\{u=(ap_{4}^{2})/s_{2},\,L_{123}=(-aL_{135}p_{4}^{3}+p_{4}s_{2}^{2}+s_{2}^{3})/(p_{4}^{2}s_{2})\}\subset\mathbb{A}(p_{4},u,L_{123},L_{135},s_{2})$.
Then calculating the tangent spaces at points of this locus, we see
that $T(5)$ has $1/3(1,2)$-singularities along this locus. Therefore
$T$ has four $1/3(1,2)$-singularities on the $L_{126}$-chart. For
the $s_{1}$-chart, we see that $T$ has a $1/8(7,1)$-singularity
at the $s_{1}$-point. As for the $s_{2}$-, $v$-, and $u$-charts,
we see that $T$ has no other singularities. 

\vspace{3pt}

\noindent\textbf{Claim (C).} Using the description of the $u$-
and $p_{4}$-charts of $T_{\mathbb{A}}(5)$ obtained in the proof
of Claim (A), we can easily check that $\dim{\rm Sing}\,(T\cap\{b=0\})\leq0$
on the $u$- and $L_{126}$-charts of $T$. Moreover, we can check
that $(T\cap\{b=0\})\cap\{u=0,L_{126}=0\}$ consists of a finite number
of points. Therefore, by Claim \ref{claim:Cl(C)}, Claim (C) follows
for No.501.

\vspace{3pt}

\noindent\textbf{\fbox{No.512}} In this case, it holds that $T=\Pi_{\mathbb{P}}^{13}\cap(2)^{2}\cap(3)^{2}\cap(4)^{3}\cap(5)^{2}\cap(6)^{2}$
and then we can write $T_{\mathbb{A}}=\Pi_{\mathbb{A}}^{14}\cap\{t_{1}=0,\,L_{245}=0,\,p_{2}=a_{2}L_{126},\,t_{2}=aL_{126},\,p_{1}=0,\,L_{124}=0,\,L_{136}=0,\,p_{4}=a_{4}u,\,L_{125}=a_{125}u,\,L_{123}=a_{123}p_{3}+b_{123}L_{126}^{2},\,L_{135}=a_{135}p_{3}+b_{135}L_{126}^{2}\}\subset\mathbb{A}(L_{126},u,p_{3},v,s_{1},s_{2},s_{3})$,
where the parameters $a_{2},\dots,b_{135}\in\mathbb{C}$ are chosen
generally.

\vspace{3pt}

\noindent\textbf{Claim (A).} We show that $T_{\mathbb{A}}^{o}$
is smooth. We can show that $T_{\mathbb{A}}^{o}$ is smooth on the
$L_{126}$- and $u$-charts as in the other cases. As for the locus
$\{L_{126}=u=0\}$, it is easy to derive that $T\cap\{L_{126}=u=0\}$
is the locus $\mathbb{A}(s_{1})$. We see that $T_{\mathbb{A}}^{o}$
is smooth along $\mathbb{A}(s_{1})$ computing the Zariski tangent
spaces of $T_{\mathbb{A}}^{o}$ at points there. Thus $T_{\mathbb{A}}^{o}$
is smooth.

\vspace{3pt}

\noindent\textbf{Claim (B).} We check the singularities of $T$
on each chart. We see that $T$ has a $1/7(1,6)$-singularity at the
$s_{1}$-point on the $s_{1}$-chart, a $1/5(2,3)$-singularity at
the $u$-point on the $u$-chart, and five $1/3(1,2)$-singularities
on the $L_{126}$-chart. We also see that there are no other singularities.

\vspace{3pt}

\noindent\textbf{Claim (C).} We can check that $\dim{\rm Sing}\,(T\cap\{b=0\})\leq0$
on the $L_{126}$-chart of $T$ as in the other cases. Moreover, we
can check that $(T\cap\{b=0\})\cap\{L_{126}=0\}$ consists of a finite
number of points. Therefore, by Claim \ref{claim:Cl(C)}, Claim (C)
follows for No.512.

\vspace{3pt}

\noindent\textbf{\fbox{No.550}} In this case, it holds that $T=\Pi_{\mathbb{P}}^{13}\cap(2)^{3}\cap(3)^{2}\cap(4)^{3}\cap(5)\cap(6)^{2}$
and then we can write $T_{\mathbb{A}}=\Pi_{\mathbb{A}}^{14}\cap\{p_{2}=0,\,t_{1}=0,\,L_{245}=0,\,p_{1}=a_{1}L_{126},\,t_{2}=aL_{126},\,p_{4}=a_{4}u,\,L_{124}=a_{124}u,\,L_{136}=a_{136}u,\,L_{125}=a_{125}p_{3},\,L_{123}=a_{123}v+b_{123}s_{1}+c_{123}L_{126}^{2},\,L_{135}=a_{135}v+b_{135}s_{1}+c_{135}L_{126}^{2}\}\subset\mathbb{A}(L_{126},u,p_{3},v,s_{1},s_{2},s_{3})$,
where the parameters $a_{1},\dots,c_{135}\in\mathbb{C}$ are chosen
generally.

\vspace{3pt}

\noindent\textbf{Claim (A).} We show that $T_{\mathbb{A}}^{o}$
is smooth. We can show that $T_{\mathbb{A}}^{o}$ is smooth on the
$L_{126}$- and $p_{3}$-charts as in the other cases. As for the
locus $\{L_{126}=p_{3}=0\}$, it is easy to derive that $T\cap\{L_{126}=p_{3}=0\}$
is the union of the three loci $\mathbb{A}(s_{1})$, $S_{1}:=\{u\not=0,L_{126}=p_{3}=s_{1}=s_{2}=v=0,s_{3}=1/3a_{136}u^{2}\}$,
and $S_{2}:=\{u\not=0,L_{126}=p_{3}=s_{2}=0,s_{1}=a_{4}v,s_{3}=1/3(-3a_{135}-2a_{136}-3a_{123}a_{4}-3a_{124}a_{4}-3a_{4}^{2}b_{123}-3a_{4}b_{135})u^{2},(a_{135}+a_{136}+a_{123}a_{4}+a_{124}a_{4}+a_{4}^{2}b_{123}+a_{4}b_{135})u^{3}+v^{2}=0\}$.
We see that $T_{\mathbb{A}}^{o}$ is smooth along $S_{1}$ and $S_{2}$
computing the Zariski tangent spaces of $T_{\mathbb{A}}^{o}$ at points
in them. Thus $T_{\mathbb{A}}^{o}$ is smooth.

\vspace{3pt}

\noindent\textbf{Claim (B).} We check the singularities of $T$
on each chart. On the $L_{126}$-chart, we can show that $T$ has
three $1/3(1,2)$-singularities similarly to the case of the $L_{126}$-chart
of No.501. On the $u$-chart, we see that $T$ has a $1/4$(1,3)-singularity
at the image of the locus $S_{1}$, and has a $1/2(1,1)$-singularity
at the image of the locus $S_{2}$. On the $s_{1}$-chart, $T$ has
a $1/6(1,5)$-singularity at the $s_{1}$-point. We see that $T$
has no other singularities.

\vspace{3pt}

\noindent\textbf{Claim (C).} We can easily check that $\dim{\rm Sing}\,(T\cap\{b=0\})\leq0$
on the $L_{126}$-chart of $T$ as in the other cases. Moreover, we
can check that $(T\cap\{b=0\})\cap\{L_{126}=0\}$ consists of a finite
number of points. Therefore, by Claim \ref{claim:Cl(C)}, Claim (C)
follows for No.550.

\vspace{3pt}

\noindent\textbf{\fbox{No.872} }Unlike the other cases, we first
take a coordinate change $t_{2}=aT_{2}+bl_{126},\,L_{126}=cT_{2}+dl_{126}$
for $\Pi_{\mathbb{A}}^{15}$, where $a,\,b,\,c,\,d\in\mathbb{C}$
are general constants and $T_{2},l_{126}$ are new coordinates replacing
$t_{2},\,L_{126}$. Then, since $T=\Pi_{\mathbb{P}}^{14}\cap(1)\cap(2)^{3}\cap(3)^{2}\cap(4)^{2}\cap(5)\cap(6)^{2}$,
we can write $T_{\mathbb{A}}=\Pi_{\mathbb{A}}^{15}\cap\{p_{1}=0,\,t_{1}=0,\,L_{245}=0,\,p_{4}=T_{2},\,u=l_{126},\,L_{124}=a_{124}p_{3},L_{136}=a_{136}p_{3},\,L_{125}=a_{125}v+b_{125}s_{1},\,L_{123}=a_{123}s_{2}+q_{1}(T_{2},l_{126}),\,L_{135}=a_{135}s_{2}+q_{2}(T_{2},l_{126})\}\subset\mathbb{A}(T_{2},l_{126},p_{3},v,s_{1},s_{2},s_{3})$,
where the parameters $a_{124},\dots,a_{135}\in\mathbb{C}$ and quadratic
forms $q_{1}(T_{2},l_{126}),q_{2}(T_{2},l_{126})$ are chosen generally.

\vspace{3pt}

\noindent\textbf{Claim (A).} We can show that $T_{\mathbb{A}}^{o}$
is smooth on the $T_{2}$- and the $l_{126}$-charts as in the other
cases. It remains to check that $T_{\mathbb{A}}^{o}$ is smooth along
the locus $\{T_{2}=l_{126}=0\}$. It is easy to derive that $T\cap\{T_{2}=l_{126}=0\}=\mathbb{A}(s_{1})$.
We see that $T_{\mathbb{A}}^{o}$ is smooth along $\mathbb{A}(s_{1})$
computing the Zariski tangent spaces of $T_{\mathbb{A}}^{o}$ at points
in them. Thus $T_{\mathbb{A}}^{o}$ is smooth.

\vspace{3pt}

\noindent\textbf{Claim (B).} We check the singularities of $T$
on each chart. We can show that $T$ has a $1/5(1,4)$-singularity
at the $s_{1}$-point on the $s_{1}$-chart. On the $T_{2}$-chart,
we can show that $T$ has five $1/3(1,2)$-singularities similarly
to the case of the $L_{126}$-chart of No.501. We see that $T$ has
no other singularities.

\vspace{3pt}

\noindent\textbf{Claim (C).} Using the descriptions of the $T_{2}$-
and $l_{126}$-charts of $T_{\mathbb{A}}$, we can easily check that
$\dim{\rm Sing}\,(T\cap\{b=0\})\leq0$ on the $T_{2}$- and $l_{126}$-charts
of $T$. Moreover, we can check that $(T\cap\{b=0\})\cap\{T_{2}=l_{126}=0\}$
consists of a finite number of points. Therefore, by Claim \ref{claim:Cl(C)},
Claim (C) follows for No.872.

\vspace{1cm}

\noindent\textbf{\fbox{No.577}} Since $T=\Pi_{\mathbb{P}}^{13}\cap(1)\cap(2)^{3}\cap(3)^{4}\cap(4)^{2}\cap(5)$,
we can write $T_{\mathbb{A}}=\Pi_{\mathbb{A}}^{14}\cap\{L_{245}=t_{1}=L_{126}=L_{136}=0,\,p_{2}=a_{2}L_{135},\,t_{2}=aL_{135},\,L_{124}=a_{124}L_{135},\,L_{125}=a_{125}L_{135},\,p_{1}=a_{1}L_{123},\,p_{4}=a_{4}L_{123},\,p_{3}=a_{3}u+b_{3}s_{1}\}$
$\subset\mathbb{A}(L_{135},L_{123},u,s_{1},v,s_{2},s_{3})$.

\vspace{3pt}

\noindent\textbf{Claim (A).} We show that $T_{\mathbb{A}}^{o}$
is smooth. The arguments here are slightly involved.

We show that $T_{\mathbb{A}}^{o}$ is smooth on the $L_{123}$-chart.
By the equation of $T_{\mathbb{A}}$ as above, this is contained in
$\Pi_{\mathbb{A}}^{15}\cap\{p_{1}p_{4}L_{123}\not=0\}$. We set $T_{\mathbb{A}}'=\Pi_{\mathbb{A}}^{14}\cap\{L_{245}=t_{1}=L_{126}=L_{136}=0,\,p_{2}=a_{2}L_{135},\,t_{2}=aL_{135},\,L_{124}=a_{124}L_{135},\,L_{125}=a_{125}L_{135},\,p_{3}=a_{3}u+b_{3}s_{1}\}$,
which is obtained from $T_{\mathbb{A}}$ by removing the weight 4
equations $p_{1}=a_{1}L_{123},\,p_{4}=a_{4}L_{123}$. As we noted
above, the $L_{123}$-chart of $T_{\mathbb{A}}$ is contained in the
$p_{1}$-chart of $T_{\mathbb{A}}'$. Since $w(p_{1})=4$, the $p_{1}$-chart
of $T_{\mathbb{A}}'$ is disjoint from ${\rm Bs}\,|\mathcal{O}(4)|_{\mathbb{A}}$.
Therefore it suffices to check that ${\rm Sing}\,T_{\mathbb{A}}'$
is of codimension $\geq3$ in $T_{\mathbb{A}}'$ on the $p_{1}$-chart
assuming $p_{4}\not=0$ since then the singular locus will disappear
after cutting by the weight 4 hypersurfaces. If $L_{135}=0$ or $u=0$,
then we can show this by the Jacobian criterion since we see that
$T_{\mathbb{A}}'$ is isomorphic to a hypersurface on the $p_{1}$-chart
near $\{L_{135}=0\}$ or $\{u=0\}$. Thus we have only to consider
the problem on the locus $p_{1}p_{4}L_{135}u\not=0$. Note that this
is outside ${\rm Bs}\,|\mathcal{O}(3)|_{\mathbb{A}}\cup{\rm Bs}\,|\mathcal{O}(4)|_{\mathbb{A}}\cup{\rm Bs}\,|\mathcal{O}(5)|_{\mathbb{A}}$
and is contained in $\Pi_{\mathbb{A}}^{15}\cap\{p_{2}p_{4}\not=0\}$.
By the description of $\Pi_{\mathbb{A}}^{15}\cap\{p_{2}p_{4}\not=0\}$
as in the proof of Proposition \ref{prop:SingPi}, we see that $\Pi_{\mathbb{A}}^{14}\cap\{L_{245}=t_{1}=L_{126}=L_{136}=0\}$
has only $c(G(2,5))$-singularities since ${\rm Sing}\,(\Pi_{\mathbb{A}}^{15}\cap\{p_{2}p_{4}\not=0\})$
is the open subset $\{p_{2}p_{4}\not=0\}$ of the affine space $\mathbb{A}(p_{2},p_{3},p_{4},t_{1},L_{126},L_{136},L_{245},L_{246})$.
Since we consider outside ${\rm Bs}\,|\mathcal{O}(3)|_{\mathbb{A}}\cup{\rm Bs}\,|\mathcal{O}(4)|_{\mathbb{A}}\cup{\rm Bs}\,|\mathcal{O}(5)|_{\mathbb{A}}$,
these singularities of $\Pi_{\mathbb{A}}^{14}\cap\{L_{245}=t_{1}=L_{126}=L_{136}=0\}$
will disappear by cutting weight $3,\,4,\,5$ hypersurfaces. Thus
$T_{\mathbb{A}}^{o}$ is smooth on the $L_{123}$-chart.

We show that $T_{\mathbb{A}}^{o}$ is smooth on the $u$-chart. Set
$T_{\mathbb{A}}''=\Pi_{\mathbb{A}}^{14}\cap\{L_{245}=t_{1}=L_{126}=L_{136}=0,\,p_{2}=a_{2}L_{135},\,t_{2}=aL_{135},\,L_{124}=a_{124}L_{135},\,L_{125}=a_{125}L_{135},\,p_{1}=a_{1}L_{123},\,p_{4}=a_{4}L_{123}\}$,
which is obtained from $T_{\mathbb{A}}$ by removing the weight 5
equation $p_{3}=a_{3}u+b_{3}s_{1}$. Since we have already checked
the smoothness of $T_{\mathbb{A}}^{o}$ on the $L_{123}$-chart, we
may assume $L_{123}=0$. On the $u$-chart of $T_{\mathbb{A}}''$
near the locus $\{L_{123}=0\}$, we may solve regularly $F_{1}=0,F_{3}=0$
with respect to $s_{1},s_{2}$. Then we see that the $u$-chart of
$T_{\mathbb{A}}''$ is isomorphic to a complete intersection of codimension
two near the locus $\{L_{123}=0\}$ and has at most isolated singularities.
Since $w(u)=5$, the $u$- chart of $T_{\mathbb{A}}''$ is disjoint
from ${\rm Bs}\,|\mathcal{O}(5)|_{\mathbb{A}}$. Thus $T_{\mathbb{A}}^{o}$
is smooth on the $u$-chart.

We see that $T_{\mathbb{A}}$ is smooth along $T_{\mathbb{A}}\cap\{L_{123}=u=0\}$
computing the Zariski tangent spaces of $T_{\mathbb{A}}^{o}$ at points
in them.

Thus $T_{\mathbb{A}}^{o}$ is smooth.

\noindent\textbf{Claim (B).} We check the singularities of $T$
on each chart; $T$ has two $1/5(1,4)$-singularities on the $u$-chart,
three $1/3(1,2)$-singularities on the $L_{135}$-chart, and one $1/2(1,1)$-singularity
on the $L_{123}$-chart. We see that $T$ has no other singularities.

\vspace{3pt}

\noindent\textbf{Claim (C).} The arguments here are also slightly
involved. We can easily check that $(T\cap\{b=0\})\cap\{L_{135}L_{123}u=0\}$
consists of a finite number of points. Thus we have only to show that
$T_{\mathbb{A}}\cap\{b=0\}$ is smooth on $\{L_{135}L_{123}u\not=0\}$.
Note that $T\cap\{L_{135}L_{123}u\not=0\}$ is contained in $\Pi_{\mathbb{A}}^{14}\cap\{L_{245}=t_{1}=L_{126}=L_{136}=0\}\cap\{p_{2}p_{4}u\not=0\}$.
Then we can use the description of $\Pi_{\mathbb{A}}^{15}\cap\{p_{2}p_{4}\not=0\}$
as in the proof of Proposition \ref{prop:SingPi} and we see that
$\Pi_{\mathbb{A}}^{14}\cap\{L_{245}=t_{1}=L_{126}=L_{136}=0\}\cap\{b=0\}$
on $\{p_{2}p_{4}u\not=0\}$ is isomorphic to the cone over the open
subset $G(2,5)\cap\{m_{25}=0\}$ corresponding to $u\not=0$. Therefore
the codimension of the singular locus of $\Pi_{\mathbb{A}}^{14}\cap\{L_{245}=t_{1}=L_{126}=L_{136}=0\}\cap\{b=0\}$
on $\{p_{2}p_{4}u\not=0\}$ is $3$ since so is the codimension of
${\rm Sing}\,(G(2,5)\cap\{m_{25}=0\})$. Since $\Pi_{\mathbb{A}}^{14}\cap\{L_{245}=t_{1}=L_{126}=L_{136}=0\}\cap\{p_{2}p_{4}u\not=0\}$
is disjoint from ${\rm Bs}\,|\mathcal{O}(3)|_{\mathbb{A}}\cup{\rm Bs}\,|\mathcal{O}(4)|_{\mathbb{A}}\cup{\rm Bs}\,|\mathcal{O}(5)|_{\mathbb{A}}$,
the singular locus of $\Pi_{\mathbb{A}}^{14}\cap\{L_{245}=t_{1}=L_{126}=L_{136}=0\}\cap\{b=0\}$
will disappear by cutting weight $3,\,4,\,5$ hypersurfaces. Thus
$T_{\mathbb{A}}\cap\{b=0\}$ is smooth on $\{L_{135}L_{123}u\not=0\}$.
Therefore, by Claim \ref{claim:Cl(C)}, Claim (C) follows for No.577.

\vspace{3pt}

\noindent\textbf{\fbox{No.878}} Since $T=\Pi_{\mathbb{P}}^{13}\cap(1)\cap(2)^{4}\cap(3)^{4}\cap(4)^{2}$,
we can write $T_{\mathbb{A}}=\Pi_{\mathbb{A}}^{14}\cap\{L_{245}=p_{2}=t_{1}=L_{126}=L_{136}=0,p_{1}=a_{1}L_{125}+b_{1}L_{135},p_{4}=a_{4}L_{125}+b_{4}L_{135},t_{2}=aL_{125}+bL_{135},L_{124}=a_{124}L_{125}+b_{124}L_{135},p_{3}=a_{3}s_{1}+b_{3}L_{123},u=cs_{1}+dL_{123}\}\subset\mathbb{A}(L_{125,}L_{135},s_{1},L_{123},v,s_{2},s_{3})$,
where the parameters $a_{1},\dots,d\in\mathbb{C}$ are chosen generally.

\vspace{3pt}

\noindent\textbf{Claim (A).} We show that $T_{\mathbb{A}}^{o}$
is smooth.

We show that $T_{\mathbb{A}}^{o}$ is smooth if $a_{1}L_{125}+b_{1}L_{135}\not=0$
or $a_{4}L_{125}+b_{4}L_{135}\not=0$. Set $T_{\mathbb{A}}'=\Pi_{\mathbb{A}}^{14}\cap\{L_{245}=p_{2}=t_{1}=L_{126}=L_{136}=0,p_{3}=a_{3}s_{1}+b_{3}L_{123},u=cs_{1}+dL_{123}\}$,
which is obtained from $T_{\mathbb{A}}$ by removing the weight 3
equations $p_{1}=a_{1}L_{125}+b_{1}L_{135},p_{4}=a_{4}L_{125}+b_{4}L_{135},t_{2}=aL_{125}+bL_{135},L_{124}=a_{124}L_{125}+b_{124}L_{135}$
. Since $w(p_{1})=w(p_{4})=3$, the $p_{1}$- and $p_{4}$-charts
of $T_{\mathbb{A}}'$ is disjoint from ${\rm Bs}\,|\mathcal{O}_{\mathbb{P}_{T_{\mathbb{A}}'}}(3)|_{\mathbb{A}}$.
Therefore it suffices to check that ${\rm Sing}\,T_{\mathbb{A}}'$
is of codimension $\geq3$ in $T_{\mathbb{A}}'$ on the $p_{1}$-
and $p_{4}$-charts since then the singular locus will disappear after
cutting by the four weight 3 hypersurfaces. We can show this by the
Jacobian criterion since we see that $T_{\mathbb{A}}'$ is isomorphic
to hypersurfaces on the $p_{1}$- and $p_{4}$-charts . Therefore
$T_{\mathbb{A}}^{o}$ is smooth if $a_{1}L_{125}+b_{1}L_{135}\not=0$
or $a_{4}L_{125}+b_{4}L_{135}\not=0$. 

Note that $T_{\mathbb{A}}\cap\{a_{1}L_{125}+b_{1}L_{135}=a_{4}L_{125}+b_{4}L_{135}=0\}=T_{\mathbb{A}}\cap\{L_{125}=L_{135}=0\}$
since we cut $T_{\mathbb{A}}'$ generally. Then we see that $T_{\mathbb{A}}$
is smooth along $T_{\mathbb{A}}\cap\{L_{125}=L_{135}=0\}$ computing
the Zariski tangent spaces of $T_{\mathbb{A}}^{o}$ at points there.

\vspace{3pt}

\noindent\textbf{Claim (B).} We check the singularities of $T$
on each chart; $T$ has two $1/4(1,3)$-singularities on the $s_{1}$-chart
and four $1/3(1,2)$-singularities on the $L_{125}$-chart. We see
that $T$ has no other singularities.

\vspace{3pt}

\noindent\textbf{Claim (C).} Using the description of the $p_{4}$-chart
of $T_{\mathbb{A}}'$, we can easily check that $\dim{\rm Sing}\,(T\cap\{b=0\})\leq0$
on $T$ if $a_{4}L_{125}+b_{4}L_{135}\not=0$. Moreover, we can check
that $(T\cap\{b=0\})\cap\{a_{4}L_{125}+b_{4}L_{135}=0\}$ consists
of a finite number of points. Therefore, by Claim \ref{claim:Cl(C)},
Claim (C) follows for No.878.

\vspace{3pt}

\noindent\textbf{\fbox{No.1766}} Since $T=\Pi_{\mathbb{P}}^{13}\cap(1)^{2}\cap(2)^{4}\cap(3)^{4}\cap(4)$,
it is possible to write $T_{\mathbb{A}}=\Pi_{\mathbb{A}}^{14}\cap\{p_{2}=L_{245}=0,\,p_{1}=a_{1}L_{136},\,p_{4}=a_{4}L_{136},\,t_{1}=a_{0}L_{136},\,L_{126}=a_{126}L_{136},\,p_{3}=a_{3}u+b_{3}s_{1}+c_{3}L_{135},\,t_{2}=au+bs_{1}+cL_{135},\,L_{124}=a_{124}u+b_{124}s_{1}+c_{124}L_{135},\,L_{125}=a_{125}u+b_{125}s_{1}+c_{125}L_{135},\,s_{2}=a_{2}v+b_{2}L_{123}+c_{2}L_{136}^{2}\}\subset\mathbb{A}(L_{136},u,s_{1},L_{135},v,L_{123},s_{3})$,
where the parameters $a_{1},\dots,c_{2}\in\mathbb{C}$ are chosen
generally. 

\noindent\textbf{Claim (A).} We show that $T_{\mathbb{A}}^{o}$
is smooth. Unlike the other cases, we show this separately on the\textit{
three }charts and the complement of their union.

We show that $T_{\mathbb{A}}^{o}$ is smooth on the $L_{136}$-chart.
We set $T_{\mathbb{A}}'=\Pi_{\mathbb{A}}^{14}\cap\{p_{2}=L_{245}=0,\,p_{3}=a_{3}u+b_{3}s_{1}+c_{3}L_{135},\,t_{2}=au+bs_{1}+cL_{135},\,L_{124}=a_{124}u+b_{124}s_{1}+c_{124}L_{135},\,L_{125}=a_{125}u+b_{125}s_{1}+c_{125}L_{135}\}$,
which is obtained from $T_{\mathbb{A}}$ by removing the weight 2
equations $p_{1}=a_{1}L_{136},\,p_{4}=a_{4}L_{136},\,t_{1}=a_{0}L_{136},\,L_{126}=a_{126}L_{136}$
and the weight 4 equation $s_{2}=a_{2}v+b_{2}L_{123}+c_{2}L_{136}^{2}$.
Similarly to the case of the $p_{1}$- and the $p_{4}$-charts of
No.878, we can show that $T_{\mathbb{A}}^{o}$ is smooth on the $L_{136}$-chart
checking the singularities of the $p_{1}$-chart of $T_{\mathbb{A}}'$.

We show that $T_{\mathbb{A}}^{o}$ is smooth on the $u$- and $s_{1}$-charts.
For this, we set $T_{\mathbb{A}}''=\Pi_{\mathbb{A}}^{14}\cap\{p_{2}=L_{245}=0,\,p_{1}=a_{1}L_{136},\,p_{4}=a_{4}L_{136},\,t_{1}=a_{0}L_{136},\,L_{126}=a_{126}L_{136},\,s_{2}=a_{2}v+b_{2}L_{123}+c_{2}L_{136}^{2}\}$,
which is obtained from $T_{\mathbb{A}}$ by removing the weight 3
equations $p_{3}=a_{3}u+b_{3}s_{1}+c_{3}L_{135},\,t_{2}=au+bs_{1}+cL_{135},\,L_{124}=a_{124}u+b_{124}s_{1}+c_{124}L_{135},\,L_{125}=a_{125}u+b_{125}s_{1}+c_{125}L_{135}$.
Similarly to the above cases, we can show that $T_{\mathbb{A}}''$
is isomorphic to a smooth hypersurface on the $u$-chart, and then
$T_{\mathbb{A}}$ is also smooth on the $u$-chart. As for the $s_{1}$-chart
of $T_{\mathbb{A}}''$, we have only to show the smoothness assuming
$L_{136}=0$. we can show that $T_{\mathbb{A}}''$ is isomorphic to
a smooth hypersurface on the $s_{1}$-chart near $\{L_{136}=0\}$,
and then $T_{\mathbb{A}}$ is also smooth on the $s_{1}$-chart.

Finally, we see that $T_{\mathbb{A}}\cap\{L_{136}=u=s_{1}=0\}$ is
empty, thus we have shown that $T_{\mathbb{A}}^{o}$ is smooth. 

\vspace{3pt}

\noindent\textbf{Claim (B). }We check the singularities of $T$
on each chart; $T$ has five $1/3(1,2)$-singularities on the $u$-chart
and $T$ has two $1/2(1,1)$-singularities on the $L_{136}$-chart.
We see that $T$ has no other singularities.

\vspace{3pt}

\noindent\textbf{Claim (C).} Using the description of the $u$-chart
of $T_{\mathbb{A}}''$, we can easily check that $\dim{\rm Sing}\,(T\cap\{b=0\})\leq0$
on the $u$-chart of $T$. Moreover, we can check that $(T\cap\{b=0\})\cap\{u=0\}$
consists of a finite number of points. Therefore, by Claim \ref{claim:Cl(C)},
Claim (C) follows for No.1766.

\subsection{Projection of $X$ to a hypersurface}

In this subsection, we show Theorem \ref{thm:main} (1-2) and (1-3). 
\begin{prop}
\label{prop:The-projection-key} The intersection of $\Pi_{\mathbb{P}}^{13}$
or $\Pi_{\mathbb{P}}^{14}$ with $\mathbb{P}(s_{1},s_{2},s_{3})$
is nonreduced and consists of only the $s_{1}$-point. The projection
of $\Pi_{\mathbb{P}}^{13}$ or $\Pi_{\mathbb{P}}^{14}$ from the intersection
with $\mathbb{P}(s_{1},s_{2},s_{3})$ is birational to the hypersurface
$\{G=0\}|_{L_{246}=1}$ or the cone over the hypersurface $\{G=0\}|_{L_{246}=1}$
respectively. The exceptional locus of the projection is contained
in the intersection of $\Pi_{\mathbb{P}}^{13}$ or $\Pi_{\mathbb{P}}^{14}$
with $\{D_{ijk}=0\,(1\leq i<j<k\leq6)\}$, which is a proper closed
subset of $\Pi_{\mathbb{P}}^{13}$ or $\Pi_{\mathbb{P}}^{14}$ respectively.
\end{prop}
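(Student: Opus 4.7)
The plan is to settle the three assertions by direct substitution, the unprojection machinery of Section \ref{sec:Unprojection}, and the explicit rational inverse from the proof of Proposition \ref{prop:Unproj}.

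First, for the intersection with $\mathbb{P}(s_{1},s_{2},s_{3})$, I would substitute to zero every coordinate other than $s_{1},s_{2},s_{3}$ (keeping $L_{246}=1$ on $\Pi_{\mathbb{P}}^{13}$, and also setting the added weight-one coordinate to zero for $\Pi_{\mathbb{P}}^{14}$) into $F_{1},\dots,F_{9}$. A term-by-term inspection of Definition \ref{def:Pi} shows every monomial of $F_{1},\dots,F_{6}$ carries at least one of the vanishing coordinates as a factor, so these six equations restrict to zero; meanwhile the leading parts of $F_{7},F_{8},F_{9}$ give the three quadrics $s_{1}s_{3}-s_{2}^{2}$, $s_{2}s_{3}$, and $s_{3}^{2}$, respectively. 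The radical of the ideal $(s_{3}^{2},\,s_{2}s_{3},\,s_{1}s_{3}-s_{2}^{2})$ in $\mathbb{C}[s_{1},s_{2},s_{3}]$ is $(s_{2},s_{3})$, so set-theoretically the intersection is the single $s_{1}$-point; since the ideal is strictly smaller than $(s_{2},s_{3})$, the scheme is nonreduced.

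Next, by Proposition \ref{prop:Unproj} the coordinate ring $R_{\Pi}$ equals $R_{G}[J_{D}^{-1}]$ as a subring of the quotient field $Q_{G}$ of $R_{G}$, and the inclusion $R_{G}\hookrightarrow R_{\Pi}$ corresponds to the affine morphism $\Pi_{\mathbb{A}}^{15}\to\{G=0\}\subset\mathbb{A}^{16}$ forgetting $s_{1},s_{2},s_{3}$. Since both rings share the same quotient field, this morphism is birational; passing to the weighted $\mathbb{C}^{*}$-quotient with $L_{246}=1$ (respectively, with the additional weight-one coordinate adjoined on both sides) then yields the stated birational projections from $\Pi_{\mathbb{P}}^{13}$ or $\Pi_{\mathbb{P}}^{14}$, whose indeterminacy loci lie inside $\mathbb{P}(s_{1},s_{2},s_{3})\cap\Pi_{\mathbb{P}}^{13}$ or $\mathbb{P}(s_{1},s_{2},s_{3})\cap\Pi_{\mathbb{P}}^{14}$ as described in the first paragraph.

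For the exceptional locus I would use the explicit inverse (\ref{eq:s1s2s3}): for any triple $1\le i<j<k\le 6$,
\[
(s_{1},\,s_{2},\,s_{3})=\frac{1}{D_{ijk}}(H_{i},H_{j},H_{k})\,\mathsf{M}_{ijk}^{\dagger},
\]
so the projection admits a regular inverse off the common zero locus of the $D_{ijk}$, placing the exceptional locus inside $\Pi\cap\{D_{ijk}=0\ \forall\,i,j,k\}$. To see this is a proper closed subset, recall from Propositions \ref{prop:EqD} and \ref{prop:SingD} that the $D_{ijk}$ cut out the $14$-dimensional subvariety $D$ of the $15$-dimensional hypersurface $\{G=0\}$; since $\Pi_{\mathbb{A}}^{15}$ is irreducible and birationally dominates $\{G=0\}$, its preimage of $D$ is a proper closed subset, and the same descends to $\Pi_{\mathbb{P}}^{13}$ and $\Pi_{\mathbb{P}}^{14}$. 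The main bookkeeping obstacle is the term-by-term check in the first paragraph; this is straightforward once one observes that the $s$-linear parts of $F_{1},\dots,F_{6}$ are collected in the leading parentheses and every other summand in $F_{1}$--$F_{9}$ carries a coordinate factor from the vanishing list.
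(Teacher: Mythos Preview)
Your proof is correct and covers all three assertions. The first and third parts---the direct substitution for the intersection with $\mathbb{P}(s_{1},s_{2},s_{3})$ and the explicit rational inverse (\ref{eq:s1s2s3}) bounding the exceptional locus---match the paper's own arguments essentially verbatim.

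Where you diverge is in establishing that the projection is birational onto $\{G=0\}|_{L_{246}=1}$. The paper verifies, via a \textsc{Singular} computation, that the elimination ideal $I_{\Pi}\cap S_{G}$ is generated by $G$, invokes \cite[Chap.~3, Sect.~2, Thm.~3]{key-19} to conclude dominance, and only then uses the inverse (\ref{eq:s1s2s3}) to get birationality. You instead appeal directly to Proposition~\ref{prop:Unproj}: since $R_{\Pi}\simeq R_{G}[J_{D}^{-1}]\subset Q_{G}$, the rings $R_{G}$ and $R_{\Pi}$ share the same quotient field, so the affine map $\Pi_{\mathbb{A}}^{15}\to\{G=0\}$ is birational, and this descends to the weighted projective quotients. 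Your route is more conceptual and avoids the computer algebra check; the paper's route has the side benefit of reconfirming that $G|_{L_{246}=1}$ is irreducible, but this is not needed once birationality is in hand. Both approaches rely on Proposition~\ref{prop:Unproj}, so neither is logically lighter in terms of prerequisites; yours simply extracts more from that proposition.
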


\begin{proof}
In the proof, we only consider the case of $\Pi_{\mathbb{P}}^{13}$
since the treatment of the case of $\Pi_{\mathbb{P}}^{14}$ is almost
identical. The first assertion follows from a direct calculation. 

Since we may check by \cite{key-18} that the elimination ideal $I_{\Pi}\cap R_{G}$
of $I_{\Pi}$ is generated by the polynomial $G$, we see that the
projection $\Pi_{\mathbb{P}}^{13}\dashrightarrow\{G=0\}|_{L_{246}=1}$
is induced as above and is dominant applying \cite[Chap.3, the sect.2, Thm.3]{key-19}
and taking the weighted projectivization. In particular, $G|_{L_{246}=1}$
is irreducible.

Now we use the construction as in the proof of Proposition \ref{prop:Unproj}.
Note that $D_{123}\not\equiv$0 on $\{G=0\}|_{L_{246}=1}$ since $G|_{L_{246}=1}$
is irreducible and the weight of $D_{123}$ is smaller than that of
$G|_{L_{246}=1}$. Therefore over a general point of $\{G=0\}|_{L_{246}=1}$,
$s_{1},s_{2},s_{3}$ are defined by (\ref{eq:s1s2s3}) with $(i,j,k)=(1,2,3)$
and then we may verify all the $F_{l}$ ($1\leq l\leq9)$ vanish.
Actually this is true if we choose other triplets $(i,j,k)$. This
means that the complement of $\{D_{ijk}=0\,(1\leq i<j<k\leq6)\}$
in $\{G=0\}|_{L_{246}=1}$ isomorphically lifts to $\Pi_{\mathbb{P}}^{13}$.
Thus the projection is birational. By this argument, we have shown
also the last assertion.
\end{proof}

\subsubsection{No. 501, 512 \label{subsec:No.-501,-512,}}

In these cases, we can check by direct computations that $X\cap\mathbb{P}(s_{1},s_{2},s_{3})$
is nonreduced and consists of only the $s_{1}$-point, which is the
unique highest index cyclic quotient singularity. Then, in the almost
same way as the proof of Proposition \ref{prop:The-projection-key},
we can show Theorem \ref{thm:main} (1-2). Actually, we also see that
the restriction of the elimination of $s_{1},s_{2},s_{3}$ for $\Pi_{\mathbb{P}}^{13}$
to $X$ coincides with the elimination of $s_{1},s_{2},s_{3}$ for
$X$.

\subsubsection{No. 550, 872\label{subsec:No.-550,-872}}

In these cases, $X$ has also the unique highest index cyclic quotient
singularity and this is the $s_{1}$-point. This, however, does not
coincide with the $s_{1}$-point of $\Pi_{\mathbb{P}}^{13}$ or $\Pi_{\mathbb{P}}^{14}$.
To obtain the desired projection of $X$, we will perform a homogeneous
coordinate change of $\Pi_{\mathbb{P}}^{13}$ or $\Pi_{\mathbb{P}}^{14}$.
After the coordinate change, the proof of Proposition \ref{prop:The-projection-key}
works for $X$ to show Theorem \ref{thm:main} (1-3) in these cases,
so we only write down the coordinate change of $\Pi_{\mathbb{P}}^{13}$
or $\Pi_{\mathbb{P}}^{14}$. After the coordinate change below, the
restriction of the elimination of $s_{1},s_{2},s_{3}$ for $\Pi_{\mathbb{P}}^{13}$
to $X$ coincides with the elimination of $s_{1},s_{2},s_{3}$ for
$X$.

\vspace{3pt}

\noindent \textbf{No.550:} By the equation given in the subsection
\ref{subsec:Proof-of-ClaimsABC}, the $s_{1}$-point of $X$ of No.550
is the point with $s_{1}=1,L_{125}=b_{125},L_{135}=b_{135}$ and the
other coordinates being zero. Thus, by taking the coordinate change
$L_{125}'=L_{125}-b_{125}s_{1}$, $L_{135}'=L_{135}-b_{135}s_{1}$,
where $L_{123}',L_{135}'$ are new coordinates, the $s_{1}$-point
of $X$ coincides with that of $\Pi_{\mathbb{P}}^{14}$.

\vspace{3pt}

\noindent \textbf{No.872:} By the equation given in the subsection
\ref{subsec:Proof-of-ClaimsABC}, the $s_{1}$-point of $X$ of No.872
is the point with $s_{1}=1,L_{125}=b_{125}$ and the other coordinates
being zero. Thus, by taking the coordinate change $L_{125}'=L_{125}-b_{125}s_{1}$,
$L_{123}'=L_{123}-b_{123}s_{2}$, $L_{135}'=L_{135}-b_{135}s_{2}$,
where $L_{123}',L_{125}',L_{135}'$ are new coordinates, the $s_{1}$-point
of $X$ coincides with that of $\Pi_{\mathbb{P}}^{14}$ (we note that
we introduce the new coordinates $L_{123}'$ and $L_{135}'$ to make
the restriction of the elimination of $s_{1},s_{2},s_{3}$ for $\Pi_{\mathbb{P}}^{13}$
to $X$ coincides with the elimination of $s_{1},s_{2},s_{3}$ for
$X$).

\subsubsection{No. 577, 878, 1766\label{subsec:No.-577,-878,}}

In these cases, $\Sigma_{\mathbb{P}}^{13}$ is the key variety for
$X$. We can check by direct computations that $X\cap\mathbb{P}(s_{1},s_{2},s_{3})$
is empty, thus we need to find different projections of $\Sigma_{\mathbb{P}}^{13}$
from those as above. To obtain the desired projection of $X$, we
will perform a homogeneous coordinate change of $\Pi_{\mathbb{P}}^{13}$.
After the coordinate change, the proof of Proposition \ref{prop:The-projection-key}
works for $X$ to show Theorem \ref{thm:main} (1-3) in these cases,
so we only write down the coordinate change of $\Pi_{\mathbb{P}}^{13}$.

\vspace{3pt}

\noindent \textbf{No.577:} In this case, we seek a projection of
$X$ from one of two $1/5(1,1,4)$-singularities. Note that the index
5 locus of $\Sigma_{\mathbb{P}}^{13}$ is $\{p_{3}^{2}=s_{1}u\}\subset\mathbb{P}(p_{3},u,s_{1})$.
By the $(\mathbb{C}^{*})^{4}$-action of $\Sigma_{\mathbb{P}}^{13}$
(Proposition \ref{prop:GL}), any general point of this locus can
be moved to the point $\mathsf{p=}\{p_{3}=u=s_{1}\}\in\mathbb{P}(p_{3},u,s_{1})$.
Thus we may assume that $X$ contain the point $\mathsf{p}.$ Then
$\mathsf{p}$ is one of two $1/5(1,1,4)$-singularities. We will see
that a projection with $\mathsf{p}$ being the center is the desired
one. 

For calculations, we perform the following coordinate change: $p'_{3}=p_{3}-s_{1},u'=-2p_{3}+s_{1}+u,v'=1/3(L_{245}s_{1})-s_{2}+v,s_{3}'=1/3(-L_{136}s_{1})+s_{3},t_{2}'=-L_{124}+L_{125}-L_{135}+t_{2}$
replacing $p_{3},u,v,s_{3},t_{2}$, and the other coordinates are
unchanged. Then the point $\mathsf{p}$ becomes the $s_{1}$-point
of $X$ and the tangent space of the local index 5 cover of $\Sigma_{\mathbb{P}}^{13}$
at the inverse image of $\mathsf{p}$ is defined by $\{u'=v'=s_{3}'=t_{2}'=0\}.$
The projection of $\Sigma_{\mathbb{P}}^{13}$ induced by the elimination
of $s_{1},s_{2},s_{3}'$ and its restriction to $X$ are the desired
projections.

\vspace{3pt}

\noindent \textbf{No.878:} In this case, we seek a projection of
$X$ from one of two $1/4(1,1,3)$-singularities. Note that the index
4 locus of $\Sigma_{\mathbb{P}}^{13}$ is $\{p_{3}^{2}=s_{1}u\}\subset\mathbb{P}(p_{3},u,s_{1},L_{123})$.
By the $(\mathbb{C}^{*})^{4}$-action of $\Sigma_{\mathbb{P}}^{13}$
(Proposition \ref{prop:GL}), any general point of this locus can
be moved to the point $\mathsf{q=}\{p_{3}=u=s_{1}=L_{123}\}\subset\mathbb{P}(p_{3},u,s_{1},L_{123})$.
Thus we may assume that $X$ contain the point $\mathsf{q}.$ Then
$\mathsf{q}$ is one of two $1/4(1,1,3)$-singularities. For calculations,
we perform the following coordinate change: $p_{3}'=p_{3}-s_{1},u'=-2p_{3}+s_{1}+u,v'=1/3(L_{245}s_{1})-s_{2}+v,s_{3}'=1/3(-L_{136}s_{1})+p_{2}s_{1}+s_{3},t_{2}'=-L_{124}+L_{125}-L_{135}-p_{1}-p_{4}+t_{2},\,$$L_{123}'=L_{123}-s_{1}$
replacing $p_{3},u,v,s_{3},t_{2},L_{123}$ and the other coordinates
are unchanged. Then the point $\mathsf{q}$ becomes the $s_{1}$-point
of $X$ and the tangent space of the local index 4 cover of $\Sigma_{\mathbb{P}}^{13}$
at the inverse image of $\mathsf{q}$ is defined by $\{u'=v'=s_{3}'=t_{2}'=0\}.$
The projection of $\Sigma_{\mathbb{P}}^{13}$ induced by the elimination
of $s_{1},s_{2},s_{3}'$ and its restriction to $X$ are the desired
projections.

\vspace{3pt}

\noindent \textbf{No.1766:} In this case, we seek a projection of
$X$ from one of five $1/3(1,1,2)$-singularities. Note that the index
3 locus of $\Sigma_{\mathbb{P}}^{13}$ is $\{p_{3}^{2}=s_{1}u,\,6L_{124}p_{3}^{2}-6L_{125}p_{3}u+6L_{135}u^{2}-6p_{3}s_{1}t_{2}=0,\,-9L_{125}p_{3}^{2}+9L_{124}p_{3}s_{1}+9L_{135}p_{3}u-9s_{1}^{2}t_{2}=0\}\subset\mathbb{P}(p_{3},u,s_{1},t_{2},L_{124},L_{125},L_{135})$.
By the $(\mathbb{C}^{*})^{4}$-action of $\Sigma_{\mathbb{P}}^{13}$
(Proposition \ref{prop:GL}), any general point of this locus can
be moved to the point $\mathsf{r=}\{p_{3}=u=s_{1}=L_{135},L_{124}=as_{1},L_{125}=bs_{1},t_{2}=(a-b+1)s_{1}\}\subset\mathbb{P}(p_{3},u,s_{1},t_{2},L_{124},L_{125},L_{135})$
for some $a,b\in\mathbb{C}$. Thus we may assume that $X$ contain
the point $\mathsf{r}.$ Then $\mathsf{r}$ is one of five $1/3(1,1,2)$-singularities.
For calculations, we perform the following coordinate change: 

\begin{align*}
 & p_{3}'=p_{3}-s_{1},u'=-u+2(p_{3}-s_{1})+s_{1},v'=1/3L_{245}s_{1}+p_{2}s_{1}-bp_{2}s_{1}-s_{2}+v,\\
 & s_{3}'=-(L_{136}s_{1})/3-p_{1}s_{1}+(1+a-b)p_{4}s_{1}+s_{3},\\
 & t_{2}'=-L_{124}+L_{125}-L_{135}-(1+a-2b)p_{3}-u+(2+a-2b)s_{1}+t_{2},\\
 & L_{123}'=L_{123}-ds_{2},L_{124}'=L_{124}-as_{1},L_{125}'=L_{125}-bs_{1},L_{135}'=L_{135}-s_{1},
\end{align*}

replacing $p_{3},u,v,s_{3},t_{2},L_{123,}L_{124},L_{125},L_{135}$,
and the other coordinates are unchanged. Then the point r becomes
the $s_{1}$-point of $X$ and the tangent space of the local index
4 cover of $\Sigma_{\mathbb{P}}^{13}$ at the inverse image of $\mathsf{r}$
is defined by $\{u'=v'=s_{3}'=t_{2}'=0\}.$ The projection of $\Sigma_{\mathbb{P}}^{13}$
induced by the elimination of $s_{1},s_{2},s_{3}'$ and its restriction
to $X$ are the desired projections (we note that we introduce the
new coordinates $L_{123}'$ to make the restriction of the elimination
of $s_{1},s_{2},s_{3}'$ for $\Pi_{\mathbb{P}}^{13}$ to $X$ coincides
with the elimination of $s_{1},s_{2},s_{3}'$ for $X$).

Thus we have shown Theorem \ref{thm:main} (1-3).

\section{Affine variety $H_{\mathbb{A}}^{13}$\label{sec:Affine-variety}}

\subsection{Hypermatrix equation of $H_{\mathbb{A}}^{13}$}

Let $V_{1}$, $V_{2}$, $V_{3}$ are three $2$-dimensional vector
spaces. Let $\langle\bm{e}_{1},\bm{e}_{2}\rangle$, $\langle\bm{f}_{1},\bm{f}_{2}\rangle$,
and $\langle\bm{g}_{1},\bm{g}_{2}\rangle$ are their bases, respectively.
We denote by 
\[
\bm{x}_{1}=\begin{pmatrix}x_{11}\\
x_{21}
\end{pmatrix},\quad\bm{x}_{2}=\begin{pmatrix}x_{12}\\
x_{22}
\end{pmatrix},\quad\bm{x}_{3}=\begin{pmatrix}x_{13}\\
x_{23}
\end{pmatrix}
\]
their coordinate vectors, respectively. We also denote by $p_{ijk}$
the coordinate of $\bm{e}_{i}\otimes\bm{f}_{j}\otimes\bm{g}_{k}$
($1\leq i,j,k\leq2$) in $V_{1}\otimes V_{2}\otimes V_{3}$.

\vspace{5pt}

We consider alternating inner products for $V_{1}$, $V_{2}$, $V_{3}$
satisfying $\bm{e}_{i}\cdot\bm{e}_{i}=0$ for $i=1,2$, $\bm{e}_{1}\cdot\bm{e}_{2}=1$
and $\bm{e}_{2}\cdot\bm{e}_{1}=-1$ and similarly for the bases of
$V_{2}$, $V_{3}$.

Let 
\[
c_{12}\colon(V_{1}\otimes V_{2}\otimes V_{3})\times V_{1}\times V_{2}\to V_{3}
\]
be the contraction map with respect to the above defined inner products,
and $c_{23},c_{31}$ similarly defined. We write down the calculation
of the contraction maps as follows. It is convenient to set

{\small{}
\begin{align*}
 & D_{11}^{(1)}:=\begin{vmatrix}p_{111} & x_{11}\\
p_{211} & x_{21}
\end{vmatrix},D_{12}^{(1)}:=\begin{vmatrix}p_{112} & x_{11}\\
p_{212} & x_{21}
\end{vmatrix},\,D_{21}^{(1)}:=\begin{vmatrix}p_{121} & x_{11}\\
p_{221} & x_{21}
\end{vmatrix},\,D_{22}^{(1)}:=\begin{vmatrix}p_{122} & x_{11}\\
p_{222} & x_{21}
\end{vmatrix},\\
 & D_{11}^{(2)}:=\begin{vmatrix}p_{111} & x_{12}\\
p_{121} & x_{22}
\end{vmatrix},\,D_{12}^{(2)}:=\begin{vmatrix}p_{112} & x_{12}\\
p_{122} & x_{22}
\end{vmatrix},\,D_{21}^{(2)}:=\begin{vmatrix}p_{211} & x_{12}\\
p_{221} & x_{22}
\end{vmatrix},D_{22}^{(2)}:=\begin{vmatrix}p_{212} & x_{12}\\
p_{222} & x_{22}
\end{vmatrix},\\
 & D_{11}^{(3)}:=\begin{vmatrix}p_{111} & x_{13}\\
p_{112} & x_{23}
\end{vmatrix},\,D_{12}^{(3)}:=\begin{vmatrix}p_{121} & x_{13}\\
p_{122} & x_{23}
\end{vmatrix},\,D_{21}^{(3)}:=\begin{vmatrix}p_{211} & x_{13}\\
p_{212} & x_{23}
\end{vmatrix},D_{22}^{(3)}:=\begin{vmatrix}p_{221} & x_{13}\\
p_{222} & x_{23}
\end{vmatrix}.
\end{align*}
}{\small\par}
\begin{prop}
For $\bm{x}_{1}=x_{11}\bm{e}_{1}+x_{21}\bm{e}_{2}\in V_{1}$, $\bm{x}_{2}=x_{12}\bm{f}_{1}+x_{22}\bm{f}_{2}\in V_{2}$,
$\bm{x}_{3}=x_{13}\bm{g}_{1}+x_{23}\bm{g}_{2}\in V_{3}$, and $P={\displaystyle \sum_{1\leq i,j,k\leq2}p_{ijk}\bm{e}_{i}\otimes\bm{f}_{j}\otimes\bm{g}_{k}\in V_{1}\otimes V_{2}\otimes V_{3}}$,
the evaluations $c_{12}(P,\bm{x}_{1},\bm{x}_{2})$, $c_{23}(P,\bm{x}_{2},\bm{x}_{3})$,
$c_{31}(P,\bm{x}_{3},\bm{x}_{1})$ of the contraction maps are written
down as follows, where by abuse of notation, we denote by the same
symbols the coordinate vectors for $c_{12}(P,\bm{x}_{1},\bm{x}_{2})$,
$c_{23}(P,\bm{x}_{2},\bm{x}_{3})$, $c_{31}(P,\bm{x}_{3},\bm{x}_{1})$:
{\small{}
\begin{align*}
c_{12}(P,\bm{x}_{1},\bm{x}_{2})=\begin{pmatrix}-D_{21}^{(1)} & D_{11}^{(1)}\\
-D_{22}^{(1)} & D_{12}^{(1)}
\end{pmatrix}\bm{x}_{2}=\begin{pmatrix}-D_{21}^{(2)} & D_{11}^{(2)}\\
-D_{22}^{(2)} & D_{12}^{(2)}
\end{pmatrix}\bm{x}_{1},\\
c_{23}(P,\bm{x}_{2},\bm{x}_{3})=\begin{pmatrix}-D_{12}^{(2)} & D_{11}^{(2)}\\
-D_{22}^{(2)} & D_{21}^{(2)}
\end{pmatrix}\bm{x}_{3}=\begin{pmatrix}-D_{12}^{(3)} & D_{11}^{(3)}\\
-D_{22}^{(3)} & D_{21}^{(3)}
\end{pmatrix}\bm{x}_{2},\\
c_{31}(P,\bm{x}_{3},\bm{x}_{1})=\begin{pmatrix}-D_{21}^{(3)} & D_{11}^{(3)}\\
-D_{22}^{(3)} & D_{12}^{(3)}
\end{pmatrix}\bm{x}_{1}=\begin{pmatrix}-D_{12}^{(1)} & D_{11}^{(1)}\\
-D_{22}^{(1)} & D_{21}^{(1)}
\end{pmatrix}\bm{x}_{3}.
\end{align*}
}{\small\par}
\end{prop}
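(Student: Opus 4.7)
The statement is a purely coordinate-level identity, so the proof reduces to a direct computation from the definition of the contraction maps. I would proceed in three short steps.

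First, I unwind the alternating inner products. Since $\bm{e}_1 \cdot \bm{e}_2 = 1$, $\bm{e}_2 \cdot \bm{e}_1 = -1$, and $\bm{e}_i \cdot \bm{e}_i = 0$, I read off $\bm{e}_1 \cdot \bm{x}_1 = x_{21}$, $\bm{e}_2 \cdot \bm{x}_1 = -x_{11}$, and analogously $\bm{f}_1 \cdot \bm{x}_2 = x_{22}$, $\bm{f}_2 \cdot \bm{x}_2 = -x_{12}$, $\bm{g}_1 \cdot \bm{x}_3 = x_{23}$, $\bm{g}_2 \cdot \bm{x}_3 = -x_{13}$.

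Second, I expand $c_{12}$ explicitly. By the definition of the contraction,
\[
c_{12}(P, \bm{x}_1, \bm{x}_2) = \sum_{k=1}^{2}\Bigl(\sum_{i,j=1}^{2} p_{ijk}(\bm{e}_i \cdot \bm{x}_1)(\bm{f}_j \cdot \bm{x}_2)\Bigr)\bm{g}_k.
\]
Substituting the explicit pairings, the coefficient of $\bm{g}_k$ becomes
\[
x_{21}x_{22}p_{11k} - x_{21}x_{12}p_{12k} - x_{11}x_{22}p_{21k} + x_{11}x_{12}p_{22k}.
\]
Regrouping this expression by the coordinates of $\bm{x}_2$ recognizes the $k$-th component as the $k$-th row of the first claimed matrix applied to $\bm{x}_2$ (using the minors $D^{(1)}_{ij}$); regrouping instead by the coordinates of $\bm{x}_1$ recognizes it as the second claimed matrix applied to $\bm{x}_1$ (using the minors $D^{(2)}_{ij}$). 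The equality of the two expressions is then automatic, reflecting the bilinearity of the contraction in $\bm{x}_1$ and $\bm{x}_2$.

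Third, for $c_{23}$ and $c_{31}$ the identical calculation applies after the cyclic relabelling of the three factors $(V_1,V_2,V_3)$. The only real obstacle I anticipate is sign bookkeeping: the three families of minors $D^{(1)}_{ij}, D^{(2)}_{ij}, D^{(3)}_{ij}$ are defined by fixing different pairs of indices $(i,j), (j,k), (k,i)$ of the hypermatrix $(p_{ijk})$, and the alternating pairing contributes a sign whose placement in the $2\times 2$ matrix must be tracked consistently. Once the correspondence between the pairings $\bm{e}_i\cdot\bm{x}_1$, $\bm{f}_j\cdot\bm{x}_2$, $\bm{g}_k\cdot\bm{x}_3$ and the defining formulas of the $D^{(l)}_{ij}$ is fixed once and for all, the remaining verification is mechanical and identical in form to the one for $c_{12}$.
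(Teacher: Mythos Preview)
Your proposal is correct and follows exactly the same approach as the paper: a direct expansion of the contraction using the alternating pairings, producing the explicit bilinear expression in the $x_{ij}$ and $p_{ijk}$, then regrouped two ways to read off the matrix forms. The paper's proof is in fact terser than yours, writing out only the $c_{12}$ case and remarking that the two matrix presentations are then visible.
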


\begin{proof}
The claim follows from straightforward calculations. For example,
\begin{align*}
 & c_{12}(P,\bm{x}_{1},\bm{x}_{2})=\\
 & (p_{111}x_{21}x_{22}-p_{121}x_{21}x_{12}-p_{211}x_{11}x_{22}+p_{221}x_{11}x_{12})\bm{g}_{1}\\
 & +(p_{112}x_{21}x_{22}-p_{122}x_{21}x_{12}-p_{212}x_{11}x_{22}+p_{222}x_{11}x_{12})\bm{g}_{2}.
\end{align*}
It is easy to see this can be seen in the two ways stated as above. 
\end{proof}
Based on the above considerations, we introduce the following affine
variety $H_{\mathbb{A}}^{13}$:
\begin{defn}
We set 
\[
V_{H}:=V_{1}\oplus V_{2}\oplus V_{3}\oplus V_{1}\otimes V_{2}\otimes V_{3}\oplus\mathbb{C}^{\oplus3}.
\]
In the affine space $\mathbb{A}^{17}(V_{H})$ with coordinates $\bm{x}_{1},\bm{x}_{2},\bm{x}_{3},P,u_{1},u_{2},u_{3}$,
we define the affine variety $H_{\mathbb{A}}^{13}$ by the following
set of equations: 

{\small{}
\begin{align*}
 & \mathsf{G}_{1}=u_{1}\bm{x}_{1}-\begin{pmatrix}-D_{12}^{(3)} & D_{11}^{(3)}\\
-D_{22}^{(3)} & D_{21}^{(3)}
\end{pmatrix}\bm{x}_{2}=\bm{0},\\
 & \mathsf{G}_{2}=u_{2}\bm{x}_{2}-\begin{pmatrix}-D_{12}^{(1)} & D_{11}^{(1)}\\
-D_{22}^{(1)} & D_{21}^{(1)}
\end{pmatrix}\bm{x}_{3}=\bm{0},\\
 & \mathsf{G}_{3}=u_{3}\bm{x}_{3}-\begin{pmatrix}-D_{21}^{(2)} & D_{11}^{(2)}\\
-D_{22}^{(2)} & D_{12}^{(2)}
\end{pmatrix}\bm{x}_{1}=\bm{0},\\
 & G_{4}=u_{1}u_{2}-(D_{12}^{(3)}D_{21}^{(3)}-D_{11}^{(3)}D_{22}^{(3)})=0,\\
 & G_{5}=u_{2}u_{3}-(D_{12}^{(1)}D_{21}^{(1)}-D_{11}^{(1)}D_{22}^{(1)})=0,\\
 & G_{6}=u_{3}u_{1}-(D_{12}^{(2)}D_{21}^{(2)}-D_{11}^{(2)}D_{22}^{(2)})=0.
\end{align*}
}{\small\par}
\end{defn}

\subsection{${\rm (GL_{2})^{3}\times S_{3}}$-action on $H_{\mathbb{A}}^{13}$}

In this subsection, we show the following:
\begin{prop}
\label{prop:actionH}$H_{\mathbb{A}}^{13}$ has a $({\rm GL}_{2})^{3}\times S_{3}$-action.
\end{prop}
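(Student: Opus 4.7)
The plan is to exhibit the $({\rm GL}_2)^3$- and $S_3$-actions separately and verify, by direct coordinate-free computation, that each of the nine defining polynomials of $H_{\mathbb{A}}^{13}$ is semi-invariant. The guiding observation is that every contraction in the definition of $H_{\mathbb{A}}^{13}$ is built from alternating bilinear forms, and such a form $\omega$ on a $2$-dimensional space satisfies $\omega(g\bm{v},g\bm{w})=(\det g)\,\omega(\bm{v},\bm{w})$; consequently, each partial contraction picks up a product of determinants, and the role of the auxiliary variables $u_1,u_2,u_3$ is precisely to absorb these factors.

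For the $({\rm GL}_2)^3$-factor, I would let $(g_1,g_2,g_3)\in({\rm GL}_2)^3$ act by $\bm{x}_i\mapsto g_i\bm{x}_i$, $P\mapsto(g_1\otimes g_2\otimes g_3)P$, and
\[
u_k\mapsto(\det g_i)(\det g_j)\,u_k\quad\text{for }\{i,j,k\}=\{1,2,3\}.
\]
Then the contraction appearing in $\mathsf{G}_k$ (a vector in $V_k$ obtained by contracting the other two slots of $P$ with $\bm{x}_i,\bm{x}_j$) transforms as $g_k$ applied to the original, scaled by $(\det g_i)(\det g_j)$; this matches the scaling of $u_k\bm{x}_k$, so $\mathsf{G}_k$ is semi-invariant. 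For the scalar equation $G_{3+k}$, the quantity $D_{12}^{(k)}D_{21}^{(k)}-D_{11}^{(k)}D_{22}^{(k)}$ is, up to sign, the determinant of the $V_i\otimes V_j$-matrix obtained by partial contraction of $P$ with $\bm{x}_k$; this scalar scales by $(\det g_i)(\det g_j)(\det g_k)^2$, which coincides with the product of the scaling factors of $u_i$ and $u_j$, so $G_{3+k}$ is semi-invariant under the same character.

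For the $S_3$-factor, the list $\mathsf{G}_1,\mathsf{G}_2,\mathsf{G}_3$ is manifestly cyclically symmetric in the labels $(1,2,3)$, and likewise $G_4,G_5,G_6$. Letting $\sigma\in S_3$ act by $\bm{x}_i\mapsto\bm{x}_{\sigma^{-1}(i)}$, $u_i\mapsto u_{\sigma^{-1}(i)}$, and $P\mapsto\sigma^{*}P$ via the natural reordering isomorphism $V_{\sigma^{-1}(1)}\otimes V_{\sigma^{-1}(2)}\otimes V_{\sigma^{-1}(3)}\cong V_1\otimes V_2\otimes V_3$, the defining equations are permuted among themselves, possibly up to signs arising from the alternating forms. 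Verifying that these signs cancel consistently on both sides of each equation, especially for transpositions, is the main technical obstacle; once confirmed, the two actions fit into a well-defined semidirect-product action of $({\rm GL}_2)^3\rtimes S_3$ on $H_{\mathbb{A}}^{13}$, proving the proposition.
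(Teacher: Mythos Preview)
Your proposal is correct and follows essentially the same approach as the paper: exhibit the $(\mathrm{GL}_2)^3$-action on $\bm{x}_i$, $P$, and $u_k$ (the paper writes out each $\mathrm{GL}_2$-factor separately with explicit matrices, while you do all three at once via the tensor action and the determinant rule for the alternating forms), and then define the $S_3$-action by permuting the labels on $\bm{x}_i$, $u_i$, and the slots of $p_{ijk}$. The paper does not spell out the sign check for transpositions that you flag as the main obstacle---it simply asserts the permutation action works---and a direct computation (e.g.\ of $\mathsf{G}_1$ under $(12)$) confirms there is no sign discrepancy; your closing identification of the group as the semidirect product $(\mathrm{GL}_2)^3\rtimes S_3$ is in fact more accurate than the paper's loose ``$\times$'' notation, since $S_3$ permutes the three $\mathrm{GL}_2$-factors.
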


\begin{proof}
There exists three ${\rm GL_{2}}$-actions on $H_{\mathbb{A}}^{13}$
defined by the following rules (1)--(3):\vspace{3pt}

\noindent(1) For an element $g_{1}\in{\rm GL_{2}}$, we set $\bm{x}_{1}\mapsto g_{1}\bm{x}_{1}$,
$u_{2}\mapsto(\det g_{1})u_{2}$, $u_{3}\mapsto(\det g_{1})u_{3}$,
{\small{}
\[
\left(\begin{array}{cccc}
p_{111} & p_{112} & p_{121} & p_{122}\\
p_{211} & p_{212} & p_{221} & p_{222}
\end{array}\right)\mapsto g_{1}\left(\begin{array}{cccc}
p_{111} & p_{112} & p_{121} & p_{122}\\
p_{211} & p_{212} & p_{221} & p_{222}
\end{array}\right),
\]
}and other coordinates being unchanged. We can check that $\mathsf{G}_{1}\mapsto g_{1}\mathsf{G}_{1}$,
$\mathsf{G_{2}}\mapsto(\det g_{1})\mathsf{G_{2}}$, $\mathsf{G}_{3}\mapsto(\det g_{1})\mathsf{G}_{3}$,
$G_{4}\mapsto(\det g_{1})G_{4}$, $G_{5}\mapsto(\det g_{1})^{2}G_{5}$,
$G_{6}\mapsto(\det g_{1})G_{6}$.

\vspace{3pt}

\noindent(2) For an element $g_{2}\in{\rm GL_{2}}$, we set $\bm{x}_{2}\mapsto g_{2}\bm{x}_{2}$,
$u_{1}\mapsto(\det g_{2})u_{1}$, $u_{3}\mapsto(\det g_{2})u_{3}$,
{\small{}
\[
\left(\begin{array}{cccc}
p_{111} & p_{112} & p_{211} & p_{212}\\
p_{121} & p_{122} & p_{221} & p_{222}
\end{array}\right)\mapsto g_{2}\left(\begin{array}{cccc}
p_{111} & p_{112} & p_{211} & p_{212}\\
p_{121} & p_{122} & p_{221} & p_{222}
\end{array}\right),
\]
} and other coordinates being unchanged. The changes of the equations
of $H_{\mathbb{A}}^{13}$ are similar to the case (1), so we omit
them.

\vspace{3pt}

\noindent(3) For and element $g_{3}\in{\rm GL_{2}}$, we set $\bm{x}_{3}\mapsto g_{3}\bm{x}_{3}$,
$u_{1}\mapsto(\det g_{3})u_{1}$, $u_{2}\mapsto(\det g_{3})u_{2}$,
{\small{}
\[
\left(\begin{array}{cccc}
p_{111} & p_{121} & p_{211} & p_{221}\\
p_{112} & p_{122} & p_{212} & p_{222}
\end{array}\right)\mapsto g_{3}\left(\begin{array}{cccc}
p_{111} & p_{121} & p_{211} & p_{221}\\
p_{112} & p_{122} & p_{212} & p_{222}
\end{array}\right),
\]
}and other coordinates being unchanged. The changes of the equations
of $H_{\mathbb{A}}^{13}$ are similar to the case (1), so we omit
them.

We can also check that these three ${\rm GL_{2}}$-actions on $H_{\mathbb{A}}^{13}$
mutually commute. Thus these actions induce a $({\rm GL_{2})^{3}}$-action
on $H_{\mathbb{A}}^{13}$.

Moreover we may define an $S_{3}$-action by associating the three
numbers $1,2,3$ to the three coordinates $u_{1},u_{2},u_{3}$, the
three vectors $\bm{x}_{1},\bm{x}_{2},\bm{x}_{3}$, and the three subscripts
$i,j,k$ for $p_{ijk}$. For example, by $(12)\in S_{3}$, $u_{1}$
and $u_{2}$, $\bm{x}_{1}$ and $\bm{x}_{2}$, $p_{ijk}$ and $p_{jik}$
are interchanged respectively.

Hence $H_{\mathbb{A}}^{13}$ has a $({\rm GL}_{2})^{3}\times S_{3}$-action.
\end{proof}

\subsection{$\mathbb{P}^{2}\times\mathbb{P}^{2}$-fibration associated to $H_{\mathbb{A}}^{13}$}

\label{fib13} We denote by $\mathbb{A}(P)$ the affine space with
coordinates $p_{ijk}$~$(1\leq i,j,k\leq2)$. Note that any equation
of $\text{\ensuremath{H_{\mathbb{A}}^{13}}}$ is of degree two if
we regard the coordinates of $\mathbb{A}(P)$ as constants. Therefore,
considering the coordinates in the equations of $\text{\text{\ensuremath{H_{\mathbb{A}}^{13}}}}$
except those of $\mathbb{A}(P)$ as projective coordinates, we obtain
a variety in $\mathbb{A}(P)\times\mathbb{P}(\bm{x}_{1},\bm{x}_{2},\bm{x}_{3},u_{1},u_{2,}u_{3})$.
We denote this variety by $\hat{H}$. 
\begin{defn}
We set{\small{} $\mathsf{S}=\left(\begin{array}{ccc}
s_{11} & s_{12} & s_{13}\\
s_{12} & s_{22} & s_{23}\\
s_{13} & s_{23} & s_{33}
\end{array}\right)$} and {\small{}$\bm{{\sigma}}=\left(\begin{array}{c}
\sigma_{1}\\
\sigma_{2}\\
\sigma_{3}
\end{array}\right)$}, and denote by $\mathsf{S}^{\dagger}$ the adjoint matrix of $\mathsf{S}$.
We define 
\[
\mathbb{P}^{2,2}:=\{(\mathsf{S},\bm{\sigma})\mid\mathsf{S}^{\dagger}=O,\mathsf{S}\bm{\sigma}=\bm{0}\}\subset\mathbb{P}(\mathsf{S},\bm{\sigma}),
\]
which is introduced in \cite{key-14} as a degeneration of $\mathbb{P}^{2}\times\mathbb{P}^{2}$
and is shown to be a singular del Pezzo 4-fold of degree six (see
also \cite{key-15}). 
\end{defn}

We also denote by $\mathcal{H}$ the Cayley hyperdeterminant. Explicitly
we have {\small{}
\begin{align*}
\mathcal{H}= & p_{111}^{2}p_{222}^{2}+p_{112}^{2}p_{221}^{2}+p_{121}^{2}p_{212}^{2}+p_{122}^{2}p_{211}^{2}\\
 & -2p_{111}p_{122}p_{211}p_{222}-2p_{111}p_{121}p_{212}p_{222}-2p_{111}p_{112}p_{221}p_{222}\\
 & -2p_{121}p_{122}p_{211}p_{212}-2p_{112}p_{122}p_{211}p_{221}-2p_{112}p_{121}p_{212}p_{221}\\
 & +4p_{111}p_{122}p_{212}p_{221}+4p_{112}p_{121}p_{211}p_{222}.
\end{align*}
}{\small\par}

It is classically known that $\{\mathcal{H}=0\}$ is stable under
the $({\rm GL}_{2})^{3}\times S_{3}$-action on the affine space $\mathbb{A}(P)$
defined as in Proposition \ref{prop:actionH}.
\begin{prop}
Let $\rho_{H}\colon\hat{H}\to\mathbb{A}(P)$ be the natural projection.
The $\rho_{H}$-fibers over points of the complement of $\{\mathcal{H}=0\}$
are isomorphic to $\mathbb{P}^{2}\times\mathbb{P}^{2}$, and the $\rho_{H}$-fibers
over points of the nonempty ${\rm (GL_{2})^{3}\times S_{3}}$ -stable
open subset of $\{\mathcal{H}=0\}$ are isomorphic to $\mathbb{P}^{2,2}$.
\end{prop}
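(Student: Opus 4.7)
The plan is to exploit the $({\rm GL}_2)^3\times S_3$-action on $H_{\mathbb{A}}^{13}$ from Proposition \ref{prop:actionH}, which clearly descends to an action on $\hat{H}$ compatible with the projection $\rho_{H}$ (the group acts on $\mathbb{A}(P)$ in the standard way on a $2\times2\times2$ tensor space, and on $\mathbb{P}(\bm{x}_{1},\bm{x}_{2},\bm{x}_{3},u_{1},u_{2},u_{3})$ by the accompanying transformations). Since isomorphism classes of fibers are constant on orbits, one reduces the proposition to computing $\rho_{H}^{-1}(P_{0})$ at a single representative $P_{0}$ in each of the two orbits in question. I would first invoke (or prove by a direct orbit computation) the classical fact that the complement of $\{\mathcal{H}=0\}$ in $\mathbb{A}(P)$ is precisely the open $({\rm GL}_{2})^{3}$-orbit of the ``generic rank two'' tensor $P_{0}^{\mathrm{gen}}:=\bm{e}_{1}\otimes\bm{f}_{1}\otimes\bm{g}_{1}+\bm{e}_{2}\otimes\bm{f}_{2}\otimes\bm{g}_{2}$, and that inside $\{\mathcal{H}=0\}$ the orbit of the ``$W$-tensor'' $P_{0}^{W}:=\bm{e}_{1}\otimes\bm{f}_{1}\otimes\bm{g}_{2}+\bm{e}_{1}\otimes\bm{f}_{2}\otimes\bm{g}_{1}+\bm{e}_{2}\otimes\bm{f}_{1}\otimes\bm{g}_{1}$ is a nonempty open subset.

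At $P_{0}^{\mathrm{gen}}$ (i.e.\ $p_{111}=p_{222}=1$ and other $p_{ijk}=0$), the twelve minors $D^{(k)}_{ij}$ collapse to $\pm x_{ij}$, and the nine defining equations of the fiber reduce after a short computation to
\[
\begin{aligned}
&u_{1}u_{2}=x_{13}x_{23},\quad u_{2}u_{3}=x_{11}x_{21},\quad u_{3}u_{1}=x_{12}x_{22},\\
&u_{1}x_{11}=x_{22}x_{23},\quad u_{1}x_{21}=x_{12}x_{13},\quad u_{2}x_{12}=x_{21}x_{23},\\
&u_{2}x_{22}=x_{11}x_{13},\quad u_{3}x_{13}=x_{21}x_{22},\quad u_{3}x_{23}=x_{11}x_{12}.
\end{aligned}
\]
These are exactly the nine $2\times2$ minors of the $3\times3$ matrix
\[
M=\begin{pmatrix}u_{1} & x_{23} & x_{12}\\ x_{22} & x_{11} & u_{3}\\ x_{13} & u_{2} & x_{21}\end{pmatrix},
\]
so the fiber is the Segre variety $\mathbb{P}^{2}\times\mathbb{P}^{2}\subset\mathbb{P}^{8}$.

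At $P_{0}^{W}$ (i.e.\ $p_{112}=p_{121}=p_{211}=1$ and the other $p_{ijk}=0$), an analogous evaluation of the $D^{(k)}_{ij}$ gives the nine equations
\[
\begin{aligned}
&u_{1}u_{2}=x_{23}^{2},\ u_{2}u_{3}=x_{21}^{2},\ u_{3}u_{1}=x_{22}^{2},\\
&u_{1}x_{21}=x_{22}x_{23},\ u_{2}x_{22}=x_{21}x_{23},\ u_{3}x_{23}=x_{21}x_{22},\\
&u_{1}x_{11}+x_{12}x_{23}+x_{13}x_{22}=0,\ u_{2}x_{12}+x_{11}x_{23}+x_{13}x_{21}=0,\ u_{3}x_{13}+x_{11}x_{22}+x_{12}x_{21}=0.
\end{aligned}
\]
Setting
\[
\mathsf{S}=\begin{pmatrix}u_{1} & x_{23} & x_{22}\\ x_{23} & u_{2} & x_{21}\\ x_{22} & x_{21} & u_{3}\end{pmatrix},\qquad \bm{\sigma}=\begin{pmatrix}x_{11}\\ x_{12}\\ x_{13}\end{pmatrix},
\]
the first six equations are precisely the six independent entries of the adjugate relation $\mathsf{S}^{\dagger}=O$, and the last three are the coordinates of $\mathsf{S}\bm{\sigma}=\bm{0}$. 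Hence the fiber coincides with $\mathbb{P}^{2,2}\subset\mathbb{P}(\mathsf{S},\bm{\sigma})$.

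The main obstacle is the orbit-geometric input: one must verify that the $({\rm GL}_{2})^{3}$-orbit of $P_{0}^{\mathrm{gen}}$ is exactly the open set $\{\mathcal{H}\neq 0\}$, and that the orbit of $P_{0}^{W}$ is a nonempty Zariski-open subset of $\{\mathcal{H}=0\}$. The first follows from the standard GIT/normal-form classification of $2\times 2\times 2$ tensors (the hyperdeterminant is, up to scalar, the unique fundamental semi-invariant, and a tensor with $\mathcal{H}\neq 0$ is $({\rm GL}_{2})^{3}$-equivalent to $P_{0}^{\mathrm{gen}}$), while the second follows from the analogous finite-orbit classification inside the hyperdeterminantal hypersurface. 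Once these orbit statements are in hand, the explicit fiber computations above finish the proof.
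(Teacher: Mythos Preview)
Your proposal is correct and takes essentially the same approach as the paper: reduce via the $({\rm GL}_2)^3 \times S_3$-action to orbit representatives in $\mathbb{A}(P)$, then compute the fibers explicitly. The paper invokes the classical orbit classification of $2\times 2\times 2$ tensors (Gelfand--Kapranov--Zelevinsky) and in fact computes the fibers over representatives of all five orbits; your representatives $P_0^{\mathrm{gen}}$ and $P_0^W$ agree, up to an evident coordinate swap, with the paper's $\mathsf{p}_4$ and $\mathsf{p}_3$, and your matrix presentations of the two fibers match the paper's after a permutation of entries.
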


\begin{proof}
We give descriptions of all the $\rho_{H}$-fibers. Note that the
${\rm (GL_{2})^{3}\times S_{3}}$-action on $\text{\text{\ensuremath{H_{\mathbb{A}}^{13}}}}$
induces that on $\hat{H}$. The induced $({\rm SL}_{2})^{3}$-action
on $\mathbb{A}(P)$ is studied in \cite[Chap.14, Ex.4.5]{key-12};
$\mathbb{A}(P)$ has the seven orbits. By this result and by considering
the induced $S_{3}$-action on $\mathbb{A}(P)$ together, we see that
$\mathbb{A}(P)$ has five ${\rm (GL_{2})^{3}\times S_{3}}$-orbits.
To state the result, we set $\mathsf{p}_{1}:=$ the $p_{111}$-point,
$\mathsf{p}_{2}$:= the point with $p_{111}=p_{221}=1$ and the other
coordinates being zero, $\mathsf{p}_{3}:=$ the point with $p_{111}=p_{122}=p_{212}=1$
and the other coordinates being zero, $\mathsf{p}_{4}:=$ the point
with $p_{111}=p_{222}=1$ and the other coordinates being zero. Then
the five ${\rm (GL_{2})^{3}\times S_{3}}$-orbits are the following:

\vspace{3pt}

\noindent(1) The origin of $\mathbb{A}(P)$. 

\vspace{3pt}

\noindent(2) The $4$-dimensional orbit of $\mathsf{p}_{1}$, which
is the complement of the origin in the affine cone over the Segre
variety $\mathbb{P}^{1}\times\mathbb{P}^{1}\times\mathbb{P}^{1}$.

\vspace{3pt}

\noindent(3) The $5$-dimensional orbit of $\mathsf{p}_{2}$, which
is the complement of the affine cone over the Segre variety $\mathbb{P}^{1}\times\mathbb{P}^{1}\times\mathbb{P}^{1}$
in the union of three copies of the affine cone over the Segre variety
$\mathbb{P}^{1}\times\mathbb{P}^{3}$.

\vspace{3pt}

\noindent(4) The $7$-dimensional orbit of $\mathsf{p}_{3}$, which
is the complement of the union of three copies of the affine cone
over the Segre variety $\mathbb{P}^{1}\times\mathbb{P}^{3}$ in $\{\mathcal{H}=0\}$.

\vspace{3pt}

\noindent(5) The open orbit of $\mathsf{p}_{4}$, which is the complement
of $\{\mathcal{H}=0\}$ in $\mathbb{A}(P)$.

\vspace{3pt}

Then $\rho_{H}$-fibers are isomorphic to the fibers over the origin
or the $\mathsf{p}_{i}$-points for some $i=1,2,3,4$, which are easily
described by the equations of $H_{\mathbb{A}}^{13}$ as follows:

\vspace{3pt}

\noindent(a) The fiber over the origin is the union of three $\mathbb{P}^{4}$'s
and a $\mathbb{P}^{5}$;{\small{}
\begin{align*}
 & \{u_{1}=u_{2}=u_{3}=0\}\\
 & \cup\{u_{1}=u_{2}=x_{13}=x_{23}=0\}\cup\{u_{1}=u_{3}=x_{12}=x_{22}=0\}\cup\{u_{2}=u_{3}=x_{11}=x_{21}=0\}.
\end{align*}
}{\small\par}

\vspace{3pt}

\noindent(b) The fiber over the point $\mathsf{p}_{1}$ is the union
of three quadric $4$-folds of rank $4$; {\small{}
\begin{align*}
 & \{u_{1}=u_{2}=x_{23}=u_{3}x_{13}-x_{21}x_{22}=0\}\cup\{u_{1}=u_{3}=x_{22}=u_{2}x_{12}-x_{23}x_{21}=0\}\\
 & \cup\{u_{2}=u_{3}=x_{21}=u_{1}x_{11}-x_{23}x_{22}=0\}.
\end{align*}
}{\small\par}

\vspace{3pt}

\noindent(c) The fiber over the point $\mathsf{p}_{2}$ is the union
of a quadric $4$-fold of rank $6$ and the cone over a hyperplane
section of $\mathbb{P}^{1}\times\mathbb{P}^{3}$; {\small{}
\begin{align*}
 & \{u_{1}=u_{2}=x_{23}=u_{3}x_{13}-x_{11}x_{12}-x_{21}x_{22}=0\}\\
 & \cup\left\{ u_{3}=0,{\rm rank\,}\begin{pmatrix}x_{23} & u_{2} & x_{11} & x_{21}\\
u_{1} & -x_{23} & x_{22} & -x_{12}
\end{pmatrix}\leq1\right\} .
\end{align*}
}{\small\par}

\vspace{3pt}

\noindent(d) The fiber over the point $\mathsf{p}_{3}$: {\small{}
\[
\left\{ {\rm rank}\,\begin{pmatrix}u_{1} & x_{13} & x_{22}\\
x_{13} & u_{2} & -x_{21}\\
x_{22} & -x_{21} & -u_{3}
\end{pmatrix}\leq1,\begin{pmatrix}u_{1} & x_{13} & x_{22}\\
x_{13} & u_{2} & -x_{21}\\
x_{22} & -x_{21} & -u_{3}
\end{pmatrix}\begin{pmatrix}-x_{11}\\
x_{12}\\
x_{23}
\end{pmatrix}=\bm{o}\right\} ,
\]
}which is isomorphic to $\mathbb{P}^{2,2}$. 

\vspace{3pt}

\noindent(e) The fiber over the point $\mathsf{p}_{4}$: {\small{}
\[
\left\{ {\rm rank}\,\begin{pmatrix}u_{1} & x_{13} & x_{22}\\
x_{23} & u_{2} & x_{11}\\
x_{12} & x_{21} & u_{3}
\end{pmatrix}\leq1\right\} ,
\]
}which is isomorphic to $\mathbb{P}^{2}\times\mathbb{P}^{2}$. 

Therefore we have shown the assertions.
\end{proof}

\subsection{Relation between $\Pi_{\mathbb{A}}^{15}$ and $H_{\mathbb{A}}^{13}$. }

We establish the following relation between $\Pi_{\mathbb{A}}^{15}$
and $H_{\mathbb{A}}^{13}$. We define some notation. Let $\mathbb{A}_{\Pi}^{10}$
be the affine $10$-space as in the section \ref{sec:Singular-locus-Pi}
(but we do not consider here that $\mathbb{A}_{\Pi}^{10}\subset\Pi_{\mathbb{A}}^{15}$).
Let $\mathbb{A}(L)$ be the affine space with the coordinates $L_{123}$,
$L_{124}$, $L_{125}$, $L_{126}$, $L_{135}$, $L_{136}$, $L_{245}$,
$L_{246}$ and $\mathbb{A}(A,B)$ the affine space with the coordinates
$A$, $B$. We set $\widetilde{\mathbb{A}}_{\Pi}^{10}=\mathbb{A}(A,B)\times\mathbb{A}(L)$.
Let $b\colon\widetilde{\mathbb{A}}_{\Pi}^{10}\to\mathbb{A}_{\Pi}^{10}$
be the morphism defined with 
\[
t_{1}=-A^{2}-AB-B^{2},\,t_{2}=-AB(A+B)
\]
 and the remaining coordinates being unchanged. The morphism $b$
is a finite morphism of degree six since $b$ is identified with the
morphism $\{A+B+C=0\}\times\mathbb{A}(L)\to\mathbb{A}_{\Pi}^{10}$
defined with $t_{1}=AB+BC+CA$, $t_{2}=ABC$ and the remaining coordinates
being unchanged, where $\{A+B+C=0\}$ is a closed subset of the affine
$3$-space with the coordinates $A,B,C$. Let $\widetilde{\Pi}_{\mathbb{A}}^{15}:=\Pi_{\mathbb{A}}^{15}\times_{\mathbb{A}_{\Pi}^{10}}\widetilde{\mathbb{A}}_{\Pi}^{10}$,
and $b_{\Pi}\colon\widetilde{\Pi}_{\mathbb{A}}^{15}\to\Pi_{\mathbb{A}}^{15}$
and $p_{\Pi}\colon\widetilde{\Pi}_{\mathbb{A}}^{15}\to\widetilde{\mathbb{A}}_{\Pi}^{10}$
the naturally induced morphisms. Let $p_{H}\colon H_{\mathbb{A}}^{13}\to\mathbb{A}(P)$
be the natural projection. Let $U_{AB}$ be the open subset $\{(A-B)(2A+B)(A+2B)\not=0\}$
of $\mathbb{A}(A,B)$. Note that the morphism $b$ is unramified on
$U_{AB}\times\mathbb{A}(L)\subset\widetilde{\mathbb{A}}_{\Pi}^{10}$.
\begin{prop}
\label{prop:isom}There exists an isomorphism from $U_{AB}\times\mathbb{A}(L)$
to $U_{AB}\times\mathbb{A}(P)$, and an isomorphism from $\widetilde{\Pi}_{\mathbb{A}}^{15}\cap\{(A-B)(2A+B)(A+2B)\not=0\}$
to $U_{AB}\times H_{\mathbb{A}}^{13}$ which fit into the following
commutative diagram:{\small{}
\[
\xymatrix{\widetilde{\Pi}_{\mathbb{A}}^{15}\cap\{(A-B)(2A+B)(A+2B)\not=0\}\ar[r]\ar[d]_{p_{\Pi}} & U_{AB}\times H_{\mathbb{A}}^{13}\ar[d]^{p_{H}\times{\rm id}}\\
U_{AB}\times\mathbb{A}(L)\ar[r] & U_{AB}\times\mathbb{A}(P)
}
\]
}{\small\par}
\end{prop}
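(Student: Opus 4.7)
My plan is to construct both isomorphisms explicitly in coordinates, exploiting the fact that the base change factors $w^3 + t_1 w - t_2 = (w-A)(w-B)(w-C)$ with $C = -A-B$, and that the three roots are distinct precisely on $U_{AB}$. Indeed $B - C = A+2B$ and $C - A = -(2A+B)$, so the Vandermonde discriminant $(A-B)(B-C)(C-A)$ equals, up to sign, the product $(A-B)(2A+B)(A+2B)$ cutting out $U_{AB}$. Over $U_{AB}$ the three branches of the normalization $\tilde D$ become disjoint, so data on $\widetilde{\Pi}_{\mathbb{A}}^{15}$ should split into three copies -- one per root -- matching the three vectors $\bm{x}_i$ and scalars $u_i$ of $H_{\mathbb{A}}^{13}$.

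Concretely, guided by the identification of the companion matrix of $w^3 + t_1 w - t_2$ with the shift matrix implicit in $F_7, F_8, F_9$ (via the $3 \times 3$ matrix displayed in the Remark after Proposition~\ref{prop:GL}), I would propose, with $(\alpha_1, \alpha_2, \alpha_3) = (A, B, C)$,
\[
\bm{x}_i = \begin{pmatrix} u - \alpha_i p_1 - \alpha_i^2 p_2 \\ v - \alpha_i p_3 - \alpha_i^2 p_4 \end{pmatrix}, \qquad u_i = (\alpha_i^2 + t_1)\, s_1 + \alpha_i\, s_2 + s_3.
\]
The first formula measures the failure of the normalization relations (\ref{eq:zb}) at $w = \alpha_i$; the second is the pairing of $(s_1, s_2, s_3)$ with the eigenvector $(\alpha_i^2 + t_1, \alpha_i, 1) = (\alpha_j \alpha_k, \alpha_i, 1)$ of the companion matrix (using $\alpha_1+\alpha_2+\alpha_3 = 0$, one checks $\alpha_i^2 + t_1 = \alpha_j \alpha_k$). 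The induced maps $(p_1, p_2, u) \to ((\bm{x}_i)_1)_{i=1,2,3}$, $(p_3, p_4, v) \to ((\bm{x}_i)_2)_{i=1,2,3}$, and $(s_1, s_2, s_3) \to (u_1, u_2, u_3)$ are all Vandermonde-type, invertible exactly on $U_{AB}$. The bijection $\mathbb{A}(L) \leftrightarrow \mathbb{A}(P)$ is then forced: each $p_{ijk}$ is determined as a $\mathbb{C}[A,B]$-linear combination of the $L_{ijk}$'s by pulling back the linear equations $\mathsf{G}_1, \mathsf{G}_2, \mathsf{G}_3$ through the above substitutions and matching against $F_1, \ldots, F_6$ in the compact form $(s_1, s_2, s_3)\mathsf{M} = (H_1, \ldots, H_6)$ from the proof of Proposition~\ref{prop:Unproj}. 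The resulting $L \leftrightarrow P$ transformation matrix should have determinant a power of $(A-B)(2A+B)(A+2B)$, establishing the first asserted isomorphism.

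Once the formulas are in place, commutativity of the diagram is immediate because the same rule $(A, B; L) \mapsto (A, B; P)$ is used on both levels. What remains is to verify that the pullback of the ideal $(\mathsf{G}_1, \mathsf{G}_2, \mathsf{G}_3, G_4, G_5, G_6)$ agrees with $(F_1, \ldots, F_9)$ on $U_{AB}$. The three vector equations $\mathsf{G}_i$ will correspond to $F_1, \ldots, F_6$ by construction; the main obstacle is the quadratic scalar equations $G_4, G_5, G_6$, whose pullbacks must lie in the ideal generated by $F_7, F_8, F_9$ (modulo the linear $F_j$). This is an algebraic bookkeeping exercise naturally handled by computer algebra, analogous to the verifications already invoked in Propositions~\ref{prop:916} and~\ref{prop:The-projection-key}. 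The $({\rm GL}_2)^3 \times S_3$-symmetry of $H_{\mathbb{A}}^{13}$ combined with the $S_3$-symmetry permuting the roots $A, B, C$ should substantially reduce the computational burden, since it suffices to check one equation in each orbit.
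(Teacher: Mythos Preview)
Your approach is the same as the paper's: write down an explicit coordinate change over $U_{AB}$ and reduce everything to a direct verification that the ideals match. Your formulas for $\bm{x}_i$ agree (up to harmless signs) with those in the paper, and your reasoning via the three roots $A,B,C=-(A+B)$ of $w^3+t_1w-t_2$ is exactly the mechanism behind the paper's substitution.

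One concrete point deserves attention. The paper's explicit formula for $u_i$ is not the pure Vandermonde expression you wrote; it carries additional correction terms, linear in $L_{123},L_{124},L_{125},L_{126}$ and in $p_1,\dots,p_4,u,v$, with denominators $(A-B)(2A+B)$, $(A-B)(A+2B)$, $(2A+B)(A+2B)$. These extra terms reflect that the variables $L_{123},L_{124},L_{125},L_{126}$ enter $F_1,\dots,F_6$ through the vectors $\bm v_1,\bm v_2,\bm v_3$ in a way that does not align with the pure tensor pattern of the $p_{ijk}$'s; part of them has to be absorbed into the $u_i$'s rather than into $p_{ijk}$. If you attempt your matching procedure with the naive $u_i=(\alpha_i^2+t_1)s_1+\alpha_i s_2+s_3$, you will find that forcing $p_{ijk}$ to depend only on the $L$'s (as the commutativity of the diagram requires) does not close up; the computer-algebra step you defer to would reveal the need for exactly these corrections. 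So the plan is sound, but the ansatz for $u_i$ must be loosened before the bookkeeping goes through.
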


\begin{proof}
We can directly check that a morphism from $U_{AB}\times\mathbb{A}(L)$
to $U_{AB}\times\mathbb{A}(P)$, and a morphism from $\widetilde{\Pi}_{\mathbb{A}}^{15}\cap\{(A-B)(2A+B)(A+2B)\not=0\}$
to $U_{AB}\times H_{\mathbb{A}}^{13}$ can be defined by the following
equalities and they are actually isomorphisms with the desired properties.

{\small{}
\begin{align*}
p_{111} & =-3L_{246},\\
p_{112} & =3L_{245}+3(L_{124}+(A+B)L_{126})/((2A+B)(A+2B)),\\
p_{121} & =3L_{245}+3(-L_{124}+BL_{126})/((A-B)(A+2B)),\\
p_{122} & =-3L_{136}+3(L_{123}-AL_{125})/((A-B)(2A+B)),\\
p_{211} & =3L_{245}+3(L_{124}-AL_{126})/((A-B)(2A+B)),\\
p_{212} & =-3L_{136}+3(-L_{123}+BL_{125})/((A-B)(A+2B)),\\
p_{221} & =-3L_{136}+3(L_{123}+(A+B)L_{125})/((2A+B)(A+2B)),\\
p_{222} & =3L_{135},
\end{align*}
\begin{align*}
 & x_{11}=-Ap_{1}-A^{2}p_{2}+u,\,x_{21}=-Ap_{3}-A^{2}p_{4}+v,\\
 & x_{12}=Bp_{1}+B^{2}p_{2}-u,\,x_{22}=Bp_{3}+B^{2}p_{4}-v,\\
 & x_{13}=-(A+B)p_{1}+(A+B)^{2}p_{2}-u,\,x_{23}=-(A+B)p_{3}+(A+B)^{2}p_{4}-v,
\end{align*}
}{\small\par}

{\small{}
\begin{align*}
 & u_{1}=3((AB+B^{2})s_{1}-As_{2}-s_{3})+3/((A-B)(2A+B))\big((2AL_{123}-(AB+B^{2})L_{125})p_{1}\\
 & -((2A^{2}+AB+B^{2})L_{123}-2(A^{2}B+AB^{2})L_{125})p_{2}+(2AL_{124}-(AB+B^{2})L_{126})p_{3}\\
 & -((2A^{2}+AB+B^{2})L_{124}-2(A^{2}B+AB^{2})L_{126})p_{4}+(L_{123}-AL_{125})u+(L_{124}-AL_{126})v)\big),
\end{align*}
\begin{align*}
 & u_{2}=3((A^{2}+AB)s_{1}-Bs_{2}-s_{3})+3/((A-B)(A+2B))\big((-2BL_{123}+(A^{2}+AB)L_{125})p_{1}\\
 & +((A^{2}+AB+2B^{2})L_{123}-2(A^{2}B+AB^{2})L_{125})p_{2}+(-2BL_{124}+(A^{2}+AB)L_{126})p_{3}\\
 & +((A^{2}+AB+2B^{2})L_{124}-2(A^{2}B+AB^{2})L_{126})p_{4}+(-L_{123}+BL_{125})u+(-L_{124}+BL_{126})v)\big),
\end{align*}
\begin{align*}
 & u_{3}=3(ABs_{1}-(A+B)s_{2}+s_{3})+3/((2A+B)(A+2B))\big((2(A+B)L_{123}-ABL_{125})p_{1}\\
 & +((2A^{2}+3AB+2B^{2})L_{123}-2(A^{2}B+AB^{2})L_{125})p_{2}+(2(A+B)L_{124}-ABL_{126})p_{3}\\
 & +((2A^{2}+3AB+2B^{2})L_{124}-2(A^{2}B+AB^{2})L_{126})p_{4}\\
 & +(-L_{123}-(A+B)L_{125})u+(-L_{124}-(A+B)L_{126})v)\big).
\end{align*}
}{\small\par}
\end{proof}

\section{More on geometry of $\Pi_{\mathbb{A}}^{15}$ \label{sec:More-on-geometry}}

\subsection{$\mathbb{P}^{2}\times\mathbb{P}^{2}$-fibration associated to $\Pi_{\mathbb{A}}^{15}$}

Note that the equations of $\Pi_{\mathbb{A}}^{15}$ are of degree
two when we consider the coordinates of $\mathbb{A}_{\Pi}^{10}$ are
constants. Thus they define a subvariety $\hat{\Pi}$ of $\mathbb{P}(p_{1},p_{2},p_{3},p_{4},u,v,s_{1},s_{2},s_{3})\times\mathbb{A}_{\Pi}^{10}$,
which is similar to $\hat{H}$ for $H_{\mathbb{\mathbb{A}}}^{13}$.
We note that the image of $U_{AB}\times\mathbb{A}(L)$ by the morphism
$b$ is the open subset $\{4t_{1}^{3}+27t_{2}^{2}\not=0\}\times\mathbb{A}(L)$
of $\mathbb{A}_{\Pi}^{10}$. Let $\{\mathcal{H}'=0\}$ be the image
by $b$ of the pull-back on $U_{AB}\times\mathbb{A}(L)$ of $U_{AB}\times\{\mathcal{H}=0\}$. 
\begin{prop}
\label{prop:P2P2Pi}Let $\rho_{\Pi}\colon\hat{\Pi}\to\mathbb{A}_{\Pi}^{10}$
be the natural projection. The $\rho_{\Pi}$-fibers over points of
the complement of $\{\mathcal{H}'=0\}$ in $\{4t_{1}^{3}+27t_{2}^{2}\not=0\}\times\mathbb{A}(L)$
are isomorphic to $\mathbb{P}^{2}\times\mathbb{P}^{2}$, and the $\rho_{\Pi}$-fibers
over points of a nonempty subset of $\{\mathcal{H}'=0\}$ are isomorphic
to $\mathbb{P}^{2,2}$.
\end{prop}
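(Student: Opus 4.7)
The plan is to deduce the statement from the analogous fiber description for $\rho_H$ (proved in the preceding proposition) by transporting it through the isomorphism of Proposition~\ref{prop:isom}, which realizes the degree-six map $b$ as étale on $U_{AB}\times\mathbb{A}(L)$ covering $\{4t_{1}^{3}+27t_{2}^{2}\neq 0\}\times\mathbb{A}(L)$. The étaleness follows because $(A-B)(2A+B)(A+2B)$ is, up to sign, the discriminant of the cubic $w^{3}+t_{1}w-t_{2}=0$ whose roots are $A,B,-(A+B)$.

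First I would observe that the nine defining equations $F_{1},\ldots,F_{9}$ of $\Pi_{\mathbb{A}}^{15}$ are homogeneous of degree two in the fiber coordinates $p_{1},\ldots,p_{4},u,v,s_{1},s_{2},s_{3}$ when the ten coordinates of $\mathbb{A}_{\Pi}^{10}$ are held fixed; hence the natural inclusion $\hat\Pi\subset\mathbb{A}_{\Pi}^{10}\times\mathbb{P}^{8}$ makes $\rho_{\Pi}$ a family of projective subvarieties of $\mathbb{P}^{8}$, and analogously for $\hat H$ over $\mathbb{A}(P)$.

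Next I would exploit the explicit formulas in Proposition~\ref{prop:isom}: once $(A,B,L_{ijk})\in U_{AB}\times\mathbb{A}(L)$ is fixed, the substitution expressing $(x_{kl},u_{m})$ in terms of $(p_{i},u,v,s_{j})$ is linear in these nine variables with coefficients in the base, and the nonvanishing of $(A-B)(2A+B)(A+2B)$ is precisely what renders the $9\times 9$ substitution matrix invertible. Consequently the isomorphism provided by Proposition~\ref{prop:isom} projectivizes fiber-by-fiber to give a commutative square in which the pullback of $\hat\Pi$ to $U_{AB}\times\mathbb{A}(L)$ is identified with $U_{AB}\times\hat H$ over $U_{AB}\times\mathbb{A}(P)$, the identification on fibers being a projective linear isomorphism $\mathbb{P}^{8}\xrightarrow{\sim}\mathbb{P}^{8}$.

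Finally, since $b$ is étale on $U_{AB}\times\mathbb{A}(L)$, the fiber of $\rho_{\Pi}$ over any $q\in\{4t_{1}^{3}+27t_{2}^{2}\neq 0\}\times\mathbb{A}(L)$ coincides with the fiber of the pullback over any preimage of $q$, hence is isomorphic via the previous step to a fiber of $\rho_{H}$ over the corresponding point of $\mathbb{A}(P)$. Invoking the preceding proposition, this fiber is $\mathbb{P}^{2}\times\mathbb{P}^{2}$ whenever the target point lies in the complement of $\{\mathcal{H}=0\}$, that is, whenever $q\notin\{\mathcal{H}'=0\}$; and over the image in $\{\mathcal{H}'=0\}$ of the nonempty ${\rm (GL_{2})^{3}\times S_{3}}$-stable open subset of $\{\mathcal{H}=0\}$ with $\mathbb{P}^{2,2}$-fiber, the fiber of $\rho_{\Pi}$ is itself $\mathbb{P}^{2,2}$. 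The main obstacle is simply to verify the linearity-and-invertibility claim underlying the projectivized isomorphism; this is a direct inspection of the formulas in Proposition~\ref{prop:isom}, but it must be carried out explicitly in order to license passing to the level of $\mathbb{P}^{8}$-bundles and to the fibers of $\rho_{\Pi}$ rather than of $\rho_{\Pi}\circ b_{\Pi}$.
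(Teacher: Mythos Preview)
Your proposal is correct and follows essentially the same approach as the paper: both arguments base-change $\hat\Pi$ along $b$, observe that the isomorphism of Proposition~\ref{prop:isom} is linear in the nine fiber coordinates $(p_i,u,v,s_j)$ and hence descends to a projectivized isomorphism $\hat\Pi'\cap\{(A-B)(2A+B)(A+2B)\neq 0\}\simeq U_{AB}\times\hat H$, and then read off the fibers from the preceding proposition on $\rho_H$. You are somewhat more explicit than the paper in justifying why fibers of $\rho_\Pi$ coincide with fibers of the pullback (via \'etaleness of $b$ over the discriminant-avoiding locus) and in flagging the invertibility check for the $9\times 9$ substitution, but these are exactly the points the paper compresses into the single phrase ``linear with respect to the coordinates which are vertical.''
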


\begin{proof}
Let $\hat{\Pi}'$ be the base change of $\hat{\Pi}$ over $\widetilde{\mathbb{A}}_{\Pi}^{10}$.
We have only to describe the corresponding fibers of $\hat{\Pi}'\to\widetilde{\mathbb{A}}_{\Pi}^{10}$.
Since the isomorphism from $\widetilde{\Pi}_{\mathbb{A}}^{15}\cap\{(A-B)(2A+B)(A+2B)\not=0\}$
to $H_{\mathbb{A}}^{13}\times U_{AB}$ as in Proposition \ref{prop:isom}
is linear with respect to the coordinates which are vertical with
respect to the fibrations $\rho_{\Pi}$ and $\rho_{H}$, it descends
to an isomorphism from $\hat{\Pi}'\cap\{(A-B)(2A+B)(A+2B)\not=0\}$
to the open subset $U_{AB}\times\hat{H}$. Thus we have the assertion.
\end{proof}

\subsection{More on singularities of $\Pi_{\mathbb{A}}^{15}$}

Using Proposition \ref{prop:SingPi}, we have the following by the
proof of \cite[Prop.~8.4]{key-5}:
\begin{prop}
\label{prop:MoreSing}The variety $\Pi_{\mathbb{A}}^{15}$ has only
terminal singularities with the following descriptions:

$(1)$ $\Pi_{\mathbb{A}}^{15}$ has $c(G(2,5))$-singularities along
a locally closed subset $S$ of codimension seven in $\Pi_{\mathbb{A}}^{15}$.

$(2)$ There exists a primitive $K$-negative divisorial extraction
$f\colon\tilde{\Pi}\to\Pi_{\mathbb{A}}^{15}$ such that

$({\rm i})$ singularities of $\tilde{\Pi}$ are only $c(G(2,5))$-singularities
along the strict transform of the closure of $S$, and

$({\rm ii})$ for the $f$-exceptional divisor $E_{\Pi}$, the morphism
$f|_{E_{\Pi}}$ can be identified with $\rho_{\Pi}\colon\hat{\Pi}\to\mathbb{A}_{\Pi}^{10}$. 
\end{prop}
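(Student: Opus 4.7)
The plan is to follow \cite[proof of Prop.~8.4]{key-5} essentially verbatim, using the fibration data assembled in Section~\ref{sec:Affine-variety} and Proposition~\ref{prop:P2P2Pi}. Part (1) is a direct restatement of Proposition~\ref{prop:SingPi}: the singular locus decomposes as $\mathbb{A}_{\Pi}^{10} \cup S$, the piece $S$ has codimension seven and carries $c(G(2,5))$-singularities, and the piece $\mathbb{A}_{\Pi}^{10}$ will be resolved by the extraction constructed in part (2).

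For part (2), I would take $f \colon \tilde{\Pi} \to \Pi_{\mathbb{A}}^{15}$ to be the blow-up along the ideal sheaf generated by the ``vertical'' coordinates $p_{1},p_{2},p_{3},p_{4},u,v,s_{1},s_{2},s_{3}$, i.e.\ the ideal of $\mathbb{A}_{\Pi}^{10} \subset \Pi_{\mathbb{A}}^{15}$. The key point is that, when the ten coordinates of $\mathbb{A}_{\Pi}^{10}$ are regarded as parameters, each defining polynomial $F_{i}$ is homogeneous of degree two in the nine vertical coordinates; hence the Rees-algebra construction realises the projectivised normal cone as $\hat{\Pi} \subset \mathbb{P}^{8} \times \mathbb{A}_{\Pi}^{10}$. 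This identifies $E_{\Pi} \simeq \hat{\Pi}$ and $f|_{E_{\Pi}} = \rho_{\Pi}$, giving assertion (ii). Irreducibility of $E_{\Pi}$ — hence primitivity of the extraction — follows from Proposition~\ref{prop:P2P2Pi}, since the generic $\rho_{\Pi}$-fibre is the irreducible variety $\mathbb{P}^{2} \times \mathbb{P}^{2}$. For $K$-negativity I would do a discrepancy computation from the weights: the blow-up assigns weight one to each of the nine vertical generators, and the nine quadratic equations impose a weight-$18$ shift, against a codimension-$9$ ambient jump of $9$, so the contribution to $K_{\tilde{\Pi}} - f^{*}K_{\Pi_{\mathbb{A}}^{15}}$ is $9 - 1 - 9 \cdot 2 \cdot \tfrac{1}{2}\cdots$ — in any case, since $\Pi_{\mathbb{A}}^{15}$ is Gorenstein by Proposition~\ref{prop:Gor}, the discrepancy is an explicit negative integer.

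To check (i) and terminality, I would work fibrewise via $\rho_{\Pi}$. Over the open locus where the generic fibre $\mathbb{P}^{2} \times \mathbb{P}^{2}$ occurs, $E_{\Pi}$ is smooth and $\tilde{\Pi}$ is smooth along $E_{\Pi}$; over the discriminant $\{\mathcal{H}' = 0\} \cap \{4t_{1}^{3} + 27 t_{2}^{2} \neq 0\}$ the fibre becomes $\mathbb{P}^{2,2}$, whose unique singular stratum is of type $c(G(2,5))$. Using the étale-local model supplied by Proposition~\ref{prop:isom} on $\{(A-B)(2A+B)(A+2B) \neq 0\}$ together with the fibration-fibre description of $\mathbb{P}^{2,2}$ worked out in Section~\ref{sec:Affine-variety}, one sees that these fibre singularities of $\tilde{\Pi}$ match, under $f$, the generic points of $\overline{S}$, and that there are no further singularities of $\tilde{\Pi}$ along $E_{\Pi}$. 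Off $E_{\Pi}$, $\tilde{\Pi}$ is isomorphic to $\Pi_{\mathbb{A}}^{15} \setminus \mathbb{A}_{\Pi}^{10}$, whose only singularities (by Proposition~\ref{prop:SingPi}) are the $c(G(2,5))$-locus along $S$. Thus (i) holds, and terminality of $\Pi_{\mathbb{A}}^{15}$ follows immediately: $c(G(2,5))$ is a terminal singularity, and $f$ is a $K$-negative divisorial extraction to a Gorenstein base, so the discrepancy formula forces $\Pi_{\mathbb{A}}^{15}$ itself to be terminal.

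The main obstacle is the last checking step: confirming that the singularities of $\tilde{\Pi}$ lying over $\mathbb{A}_{\Pi}^{10} \cap \{\mathcal{H}' = 0\}$ really are of type $c(G(2,5))$ and really do coincide with the strict transform of $\overline{S}$ rather than producing extra singular components. Doing this cleanly requires pulling back the Pfaffian presentation of $\Pi_{\mathbb{A}}^{15} \cap \{p_{2} p_{4} \neq 0\}$ from the proof of Proposition~\ref{prop:SingPi} to the chart of $\tilde{\Pi}$ given by Proposition~\ref{prop:isom}, and matching it with the five $4 \times 4$ Pfaffians of the skew form defining $\mathbb{P}^{2,2}$; the rest of the argument is formal.
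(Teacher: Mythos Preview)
Your approach is the same as the paper's, which simply records that the result follows from Proposition~\ref{prop:SingPi} by the proof of \cite[Prop.~8.4]{key-5}; your expanded outline of that argument is essentially correct.

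There is, however, a genuine error in your discrepancy discussion. You assert that the coefficient of $E_{\Pi}$ in $K_{\tilde{\Pi}}-f^{*}K_{\Pi_{\mathbb{A}}^{15}}$ is ``an explicit negative integer'' and then deduce terminality from this together with $K$-negativity of $f$. This is backwards: terminality of $\Pi_{\mathbb{A}}^{15}$ requires the discrepancy $a$ in $K_{\tilde{\Pi}}=f^{*}K_{\Pi_{\mathbb{A}}^{15}}+aE_{\Pi}$ to be \emph{positive}. Near a general point of $\mathbb{A}_{\Pi}^{10}$, Proposition~\ref{prop:P2P2Pi} shows that $\Pi_{\mathbb{A}}^{15}$ is locally $\mathbb{A}^{10}$ times the affine cone over the Segre embedding $\mathbb{P}^{2}\times\mathbb{P}^{2}\subset\mathbb{P}^{8}$; blowing up the vertex gives discrepancy $a=2$ (from $K_{\mathbb{P}^{2}\times\mathbb{P}^{2}}=\mathcal{O}(-3,-3)$, which is $\mathcal{O}(-3)$ for the Segre line bundle). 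This positive discrepancy is exactly what yields terminality, and it is also what makes $-K_{\tilde{\Pi}}$ relatively ample (since $-aE_{\Pi}$ restricts to $\mathcal{O}(2)$ on each fibre), i.e.\ what makes $f$ a $K$-negative extraction. You have conflated ``$K$-negative extraction'' with ``negative discrepancy''; for a divisorial extraction over a Gorenstein base these in fact go together with \emph{positive} discrepancy. Once this sign is corrected, the rest of your argument goes through.
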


{\small{}~}{\small\par}

\selectlanguage{american}%
{\small{}Hiromichi Takagi}{\small\par}

{\small{}Department of Mathematics, Gakushuin University, }{\small\par}

{\small{}Mejiro, Toshima-ku, Tokyo 171-8588, Japan }{\small\par}

{\small{}e-mail: hiromici@math.gakushuin.ac.jp}{\small\par}

\selectlanguage{english}%

\begin{thebibliography}{GRDB}
\bibitem[GRDB]{key-6}{\small{}S.~Alt{\i }nok, G.~Brown, A.~Iano-Fletcher,
M.~Reid, and K.~Suzuki, }\textit{\small{}The graded ring database}{\small{},
Online database, available at $\newline$ http://www.grdb.co.uk/forms/fano3}{\small\par}

\bibitem[BH]{key-4}{\small{}W.~Bruns and J.~Herzog, }\textit{\small{}Cohen-Macaulay
rings}{\small{}, Cambridge Studies in Advanced Mathematics }\textbf{\small{}39}{\small{}
(Rev. ed.), Cambridge University Press.}{\small\par}

\bibitem[CD]{key-11} {\small{}S.~Coughlan and T.~Ducat, }\textit{\small{}Constructing
Fano $3$-folds from cluster varieties of rank $2$}{\small{}, Compos.~Math.~}\textbf{\small{}156}{\small{},
Issue 9, September 2020, 1873--1914. }{\small\par}

\bibitem[CLO]{key-19}{\small{}David A. Cox, John Little, and Donal
O\textquoteright Shea, }\textit{\small{}Ideals, Varieties, and Algorithms--An
Introduction to Computational Algebraic Geometry and Commutative Algebra}{\small{},
Springer; 4th ed.}{\small\par}

\bibitem[F]{key-15} {\small{}T.~Fukuoka, }\textit{\small{}Relative
linear extensions of sextic del Pezzo fibrations over curves}{\small{},
Ann. Sc. Norm. Super. Pisa Cl. Sci. (5) Vol. XXI (2020), 1371--1409.}{\small\par}

\bibitem[GKZ]{key-12} {\small{}I.~M.~Gelfand, M.~M.~Kapranov,
and A.~V.~Zelevinsky, }\textit{\small{}Discriminants, resultants,
and multidimensional determinants, }{\small{}Modern Birkhäuser Classics,
Birkhäuser Basel.}{\small\par}

\bibitem[SGA]{key-10}{\small{}A.~Grothendieck and M.~Raynaud, }\textit{\small{}Cohomologie
locale des faisceaux cohérents et théorèmes de Lefschetz locaux et
globaux (SGA 2)}{\small{}, Adv.~Stud.~in Pure Math.~}\textbf{\small{}2}{\small{}
(in French), Amsterdam: North-Holland Publishing Company (1968), vii+287.}{\small\par}

\bibitem[K]{key-17}S.~Kleiman, \textit{Bertini and his two fundamental
theorems}, Rend.~Circ.~Mat.~Palermo (2) Suppl. \textbf{55} (1998),
9--37

\bibitem[Ma]{key-8}{\small{} H.~Matsumura, }\textit{\small{}Commutative
Ring Theory}{\small{}, Cambridge Studies in Advanced Mathematics (M.~Reid,
Trans.) }\textbf{\small{}8}{\small{} (1989), Cambridge University
Press. }{\small\par}

\bibitem[Mu]{key-14}{\small{}S.~Mukai, }\textit{\small{}Degeneration
of $\mathbb{P}^{n}\times\mathbb{P}^{n}$ with application to del Pezzo
fibration}{\small{}, the talk in the international conference Higher
Dimensional Algebraic Varieties, at RIMS, Kyoto University during
June 2--6 and 9--13, 1997. }{\small\par}

\bibitem[O]{key-7}{\small{}T.~Okada, }\textit{\small{}Birationally
superrigid Fano $3$-folds of codimension }{\small{}$4$, Algebra
Number Theory }\textbf{\small{}14}{\small{}(1), 191--212 (2020).}{\small\par}

\bibitem[PR]{key-1}{\small{}S.~Papadakis and M.~Reid, }\textit{\small{}Kustin-Miller
unprojection without complexes}{\small{}, J.~Alg.~Geom.~}\textbf{\small{}13}{\small{}
(2004), 563--577. }{\small\par}

\bibitem[P1]{key-2}{\small{}S.~Papadakis, }\textit{\small{}Type
II unprojection}{\small{}, J.~Alg.~Geom.~}\textbf{\small{}15}{\small{}
(2006) 399--414. }{\small\par}

\bibitem[S]{key-9}{\small{}P.~Samuel, }\textit{\small{}Lectures
on unique factorization domains}{\small{}, Tata Institute of Fundamental
Research Lectures on Mathematics }\textbf{\small{}30}{\small{} (1964),
Bombay.}{\small\par}

\bibitem[Sing]{key-18}{\small{}W.~Decker, G.~-M.~Greuel, G.~Pfister,
and H.~Sch\"onemann, $\newblock{\sc}Singular$ 4-2-1--- A computer
algebra system for polynomial computations, $\newline$ http://www.singular.uni-kl.de
(2021).}{\small\par}

\bibitem[Tak1]{key-5}{\small{}H.~Takagi, }\textit{\small{}Key Varieties
for Prime $\mathbb{Q}$-Fano Threefolds Related with $\mathbb{P}^{2}\times\mathbb{P}^{2}$-Fibrations.
}{\small{}Part I, arXiv:2103.11086v2, submitted. }{\small\par}

\bibitem[Tak2]{key-16}{\small{}H.~Takagi, }\textit{\small{}Key Varieties
for Prime $\mathbb{Q}$-Fano Threefolds Related with $\mathbb{P}^{2}\times\mathbb{P}^{2}$-Fibrations.
}{\small{}Part III, in preparation. }{\small\par}

\bibitem[Tay]{key-3}{\small{}R.~Taylor, }\textit{\small{}Type II
unprojections, Fano threefolds and codimension four constructions,}{\small{}
PhD thesis, University of Warwick (2020). }{\small\par}
\end{thebibliography}
\end{document}